\newtheorem{theorem}{Theorem}[section]
\newtheorem{lemma}[theorem]{Lemma}
\newtheorem{proposition}[theorem]{Proposition}
\theoremstyle{definition}
\theoremstyle{remark}
\numberwithin{equation}{section}
\newcommand{\ba}{\begin{array}}
\newcommand{\ea}{\end{array}}
\newcommand{\f}{\frac}
\newcommand{\la}{\lambda}
\newcommand{\ds}{\displaystyle}
\begin{document}
\date{}
\title{ \bf\large{Hopf bifurcation and periodic solutions in a coupled Brusselator model of chemical reactions}}%\thanks{This research is supported by the National Natural Science Foundation of China (No. 11771109) and Shandong Provincial Natural Science Foundation of China (No. ZR2020YQ01).}}
\author{Yihuan Sun\textsuperscript{1},\ \ Shanshan Chen\textsuperscript{2}\footnote{Corresponding Author, Email: chenss@hit.edu.cn}\ \
 \\
{\small \textsuperscript{1} School of Mathematics, Harbin Institute of Technology,\hfill{\ }}\\
\ \ {\small  Harbin, Heilongjiang, 150001, P.R.China.\hfill{\ }}\\
{\small \textsuperscript{2} Department of Mathematics, Harbin Institute of Technology,\hfill{\ }}\\
\ \ {\small Weihai, Shandong, 264209, P.R.China.\hfill{\ }}\\
}
\maketitle

\begin{abstract}

In this paper, we consider a coupled Brusselator model of chemical reactions, for which no symmetry for the coupling
matrices is assumed.
We show that the model can undergoes a Hopf bifurcation, and consequently periodic solutions can arise when the dispersal rates are large. Moreover, the effect of the coupling matrices on the Hopf bifurcation value is considered for a special case.

\noindent {\bf{Keywords}}: Hopf bifurcation; Periodic solutions; Coupling
matrix; Line-sum symmetric matrix.\\
\noindent {\bf {MSC 2010}}: 34C23, 37G15, 92C40
\end{abstract}

\section{Introduction}

Nonlinear oscillations often occur in many chemical reactions and physical processes, \cite{Field1974,Kuramoto,Lefever1971}. For example,
sustained oscillations in Brusselator model were studied in \cite{Lefever1971}.
Brusselator model, proposed by Prigogine and Lefever \cite{1968}, describes a set of chemical reactions as follows:
\begin{equation}\label{reaction}
A\rightarrow X,\;\;B+X\rightarrow Y+C,\;\;2X+Y\rightarrow 3X,\;\;X\rightarrow D,
\end{equation}
where $A$ and $B$ are the concentrations of the initial substances, $X$ and $Y$ are the concentrations of the intermediate reactants, and $C$ and $D$ are the concentrations of the final products.
If the
concentrations $A$ and $B$ depend only on space, these chemical reactions can be modelled by the following reaction-diffusion model: (see \cite{Lefever1971,1968})
\begin{equation}\label{rdbru}
\begin{cases}
X_t= d_1\Delta X + A(x) - (B(x) + 1)X +X^2Y,&x\in\Omega,\;t>0,\\
Y_t = d_2\Delta Y  + B(x)X - X^2Y, &x\in\Omega,\;t>0.
\end{cases}
\end{equation}
Here $d_1,d_2>0$ are the dispersal rates.
If $A$ and $B$ are spatially homogeneous, then model \eqref{rdbru} admits a unique constant steady state for the homogeneous Neumann boundary condition.
One can refer to \cite{Choi,Guo2011,LiB2008,Liyan2016,LiaoMX2016,Wittenberg1997,Yan2020} and references therein for the results on steady state
and Hopf bifurcations near this constant steady state. The global bifurcation theory and some other methods were used to show the existence of non-constant steady states for a wide range of parameters, see, e.g.,
\cite{Brown1995,JiaLiWu2016,MaMJ2014,PengR2005,Peng2009}.
% the spike steady states solutions were investigated in \cite{TzouNec2013}.
Moreover, the effect of advection was considered in \cite{Andresen1999,Kuznetsov2002}, and the diffusion term in \eqref{rdbru} was replaced by
the following form:
\begin{equation*}
d_i\ds\f{\partial^2}{\partial x^2}\to d_i\ds\f{\partial^2}{\partial x^2}-c_i\ds\f{\partial}{\partial x}
\end{equation*}
for the case that $\Omega$ is one dimension.

If chemical reactions \eqref{reaction} take places in $n$-boxes,
then model \eqref{rdbru} takes the following discrete form:
\begin{equation}\label{m1}
\begin{cases}
\ds\frac{d x_j}{d t}= d_1\sum_{k=1}^np_{jk}x_k + a_j - (\beta b_j + 1)x_j + x_j^2y_j, &j=1,\dots,n,\;\;t>0,\\
\ds\frac{d y_j}{d t} = d_2\sum_{k=1}^n q_{jk}y_k  + \beta b_jx_j - x_j^2y_j,&j=1,\dots,n,\;\;t>0, \\
\bm x(0)=\bm x_0\ge(\not\equiv)\bm0,\;\bm y(0)=\bm y_0\ge(\not\equiv)\bm0.
\end{cases}
\end{equation}
Here $n \geq 2$ is the number of boxes in chemical reactions;
$\bm x=(x_1,\dots,x_n)^T$ and $\bm y=(y_1,\dots,y_n)^T$, where
$x_j$ and $y_j$ denote the concentrations of $X$ and $Y$ in box $i$ at time $t$, respectively; the nonlinear term ${x_j^2}y_j$ describes the autocatalytic step in box $j$;
and ${a_j}>0$ and $\beta{b_j}>0$ denote the input concentrations of initial substances $A$ and $B$ in box $j$, respectively. We remark that
$\beta$ introduced here is the scaling parameter, and we choose it as the bifurcation parameter.
Moreover, $\left(p_{jk}\right)$ and $\left(q_{jk}\right)$ are the coupling matrices, where
$p_{jk}, q_{jk} (j\ne k)$ describe the rates of movement from box $k$ to box $j$ for the two reactants, respectively, and $p_{jj}(=-\sum_{k\ne j} p_{kj})$ and $q_{jj}(=-\sum_{k\ne j} q_{kj})$ denote the rates of leaving box $j$
 for $j=1,\dots,n$.

Model \eqref{m1} was first considered by Prigogine and Lefever in \cite{1968} for the case of two boxes ($n=2$), where the coupling matrices $\left(p_{jk}\right)$ and $\left(q_{jk}\right)$ were assumed to be symmetric, and the two boxes were identical (that is,
$a_1=a_2=a$ and $b_1=b_2=1$). In this case, model \eqref{m1} admits a unique space-independent steady state:
$$x_i=a,\;\; y_i=\beta/a\;\;\text{for}\;\;i=1,2,$$
and the existence of space-dependent steady state was showed in \cite{1968}.
Model \eqref{m1} with symmetric coupling matrices and identical boxes were also considered in \cite{Lengyel1991,LiShi2007,Schreiber1986,ShiLi2007}, where the steady state and Hopf
bifurcation were studied in \cite{Lengyel1991,Schreiber1986}. One can also refer to \cite{Duan2019,Petit2016,Tian2019,Yu2001} and references therein for the steady state and Hopf bifurcations of
other coupled models with symmetric coupling matrices.

It is well-known that the symmetric coupling matrices could mimic random diffusion, and the asymmetric case could mimic advective movements in the fluid.
If the coupling matrices $\left(p_{jk}\right)$ and $\left(q_{jk}\right)$ are asymmetric, the steady states of model \eqref{m1}
are space-dependent even when the boxes are all identical.
Consequently, we cannot obtain the explicit expression of the steady states, which brings difficulties in analyzing the Hopf bifurcation.
In this paper, we aim to solve this problem and analyze the Hopf bifurcation of model \eqref{m1}.

Throughout the paper, we impose the following two assumptions:
\begin{enumerate}
  \item[$(\bf A_1)$] The coupling matrices $P:=(p_{jk})$ and $Q:=(q_{jk})$ are irreducible and essentially nonnegative;
  \item[$(\bf A_2)$] $\ds\frac{d_2}{d_1}:=\theta>0$.
\end{enumerate}
Here we remark that real matrices with nonnegative off-diagonal elements are called essentially nonnegative.
It follows from $(\bf A_1)$ and Perron-Frobenius theorem that $s(P)=s(Q)=0$, where $s(P)$ and $s(Q)$ are spectrum bounds of $P$ and $Q$, respectively.
Clearly, $s(P)$ and $s(Q)$ are also simple eigenvalues of $P$ and $Q$
with strongly positive eigenvectors ${\bm \xi}$ and ${\bm \eta}$, respectively, where
\begin{equation}\label{xi}
\begin{split}
&\bm{\xi}=(\xi_1,\cdots, \xi_n)^T\gg \bm 0,\;\;\text{and}\;\; \sum_{j=1}^{n} \xi_j = 1,\\
&\bm{\eta}=(\eta_1,\cdots, \eta_n)^T\gg \bm 0,\;\;\text{and}\;\; \sum_{j=1}^{n} \eta_j = 1.\\
\end{split}
\end{equation}
Here $(x_1,\cdots, x_n)^T\gg \bm 0$ represents that $x_j>0$ for $j=1,\dots,n$.
Assumption $(\bf A_2)$ is a mathematically technical condition, and it means that the dispersal rates of the two reactants are proportional.
Then letting $\tilde t=d_1 t$, denoting $\la= 1/ d_1$, and dropping the tilde sign, model \eqref{m1} can be transformed to the following equivalent model:
\begin{equation}\label{m2}
\begin{cases}
\ds\frac{d x_j}{d t}= \sum_{k=1}^np_{jk}x_k +\la \left[ a_j - (\beta b_j + 1)x_j + x_j^2y_j\right], &j=1,\dots,n,\;\;t>0,\\
\ds\frac{d y_j}{d t} = \theta\sum_{k=1}^n q_{jk}y_k  + \la \left(\beta b_jx_j - x_j^2y_j\right),&j=1,\dots,n,\;\;t>0, \\
\bm x(0)=\bm x_0\ge(\not\equiv)\bm0,\;\bm y(0)=\bm y_0\ge(\not\equiv)\bm0,
\end{cases}
\end{equation}
where $\theta$ is defined in assumption $(\bf A_2)$.

%for $\bm{u} \in \mathbb{C}^{n}$, we define
%\begin{equation*}
%\|\bm u\|_{\infty}=\max _{i=1, \ldots, n}\left|u_{i}\right|,\;\;\|\bm u\|_{2}=\left(\sum_{j=1}^{n}\left|u_{i}\right|^{2}\right)^{1 / 2}.
%\end{equation*}
%For an $n\times n$ real-valued matrix $A$, we denote the spectral bound of $A$ by
%$$s(A): = \max \{ \rm Re (\mu ):\mu \text{ is an eigenvalue of } A\},$$
%and the spectral radius of $A$ by
%$$\rho(A): = \max \{\left| \mu  \right|:\mu \text{ is an eigenvalue of } A\}.$$
%The matrix $A$ is called nonnegative if all the entries of $A$ are nonnegative. The matrix $A$ is called positive if $A$ is nonnegative and nontrivial. The matrix $A$ is called zero if
%all the entries of $A$ are zero. The matrix $A$ is called quasi-positive (or cooperative) if all off-diagonal entries of $A$ are nonnegative.

We point out that the method in this paper is motivated by \cite{Busenberg96}, where the steady state of the model is space-dependent. One can also refer to \cite{AnWangWang,Chen18,Chen12,Guo15,GuoYanJDE,Hu11,JinYuan,LiDai,Su09,Yan10,YanLiDCDS} on Hopf bifurcations near this type of steady state for delayed reaction-diffusion equations and delayed differential equations.
We remark that, for the model in \cite{Busenberg96},  the steady state does not depends on bifurcation parameter. But for patch models, the steady states always depend on the bifurcation parameter (see $\beta$ in model \eqref{m2}), which brings some technical hurdles in analyzing Hopf bifurcations. Therefore, we need to improve the method in \cite{Busenberg96} here.

For simplicity, we use the following notations. We denote the complexification of a real linear space $\mathcal Z$ to be $\mathcal Z_{\mathbb C}:=\mathcal Z\oplus{\rm i}\mathcal Z=\{x_1 + {\rm i}x_2 |x_1, x_2 \in \mathcal Z\}$, and define the kernel of a linear operator $T $ by $\mathcal N (T)$. For $\mu\in\mathbb C$, we define the real part by $\mathcal Re \mu$.
For the complex valued space $\mathbb C^n$, we choose the standard inner product $\langle \bm u,\bm v    \rangle=\sum_{j=1}^{n}\overline u_jv_j$, and consequently, the norm is defined by
$$
\| \bm u\|_2=\left(\sum_{j=1}^{n} |u_j |^2\right)^{\frac{1}{2}} \;\;\text{for}\;\; \bm u \in \mathbb C^n.
$$
Moreover, for $\bm x=(x_1,\dots,x_n)^T,\bm y=(y_1,\dots,y_n)^T\in\mathbb C^n$, we denote $$(\bm x,\bm y)^T=\left( {\begin{array}{c}
{\bm x}\\
{\bm y}
\end{array}} \right)=(x_1,\dots,x_n,y_1,\dots,y_n)^T.$$

The rest of the paper is organized as follows. In Section 2, we study the existence and uniqueness
of the positive equilibrium for model \eqref{m2} (or respectively, \eqref{m1}). In Section 3, we show the existence of Hopf bifurcation and the stability of  the positive equilibrium for model \eqref{m2} (or respectively, \eqref{m1}) when $\la$ is small. In Section 4, we show the effect of the coupling matrices on the Hopf bifurcation value for a special case. Finally, some numerical simulations are provided to illustrate the theoretical results.
%Therefore, the existence of Hopf bifurcation for model \eqref{m1} when $d_1$ and $d_2$ are large is equivalent to that for model \eqref{m2}
%when $\la$ is small.
\section{Existence of positive equilibria}

In this section, we consider the existence of positive equilibria of model \eqref{m2}, which satisfy
\begin{equation}\label{equi1}
\begin{cases}
\sum_{k=1}^n p_{jk}x_k + \la \left[a_j-(\beta b_j + 1)x_j+x_j^2y_j\right]=0, & j=1,\dots,n,\\
\theta \sum_{k=1}^n q_{jk}y_k  + \la \left(\beta b_jx_j - x_j^2y_j\right)=0, & j=1,\dots,n.
\end{cases}
\end{equation}
Note that $(\bm x,\bm y)=(c\bm \xi,d\bm\eta)$ solves \eqref{equi1} for all $c,d\in\mathbb R$ when $\la=0$. Therefore, we cannot solve
\eqref{equi1} by the direct application of the implicit function theorem. We need to split the phase space and find an equivalent system of \eqref
{equi1}.
It is well-known that
\begin{equation*}
\mathbb R^n={\rm span}\{\bm\xi\}\oplus  \mathcal M={\rm span}\{\bm\eta\}\oplus  \mathcal M,
\end{equation*}
% for every $\bm x=(x_1,\dots,x_n)^T,\;\bm y=(y_1,\dots,y_n)^T\in{\mathbb R}^n,$
%\begin{equation*}%\label{oplus}
%\bm x=k\bm\eta+\bm y,
%\bm x-k_1\bm\xi,\;\;\bm y-k_2\bm\eta\in \mathcal M,
%\end{equation*}
where $\bm \xi$ and $\bm \eta$ are defined in \eqref{xi}, and
\begin{equation*}
\mathcal M:=\left\{\bm x=(x_1,\dots,x_n)^T\in\mathbb R^n:\sum_{j=1}^n x_j=0\right\}.
\end{equation*}
Letting
\begin{equation}\label{xy}
\begin{split}
&\bm x= c\bm \xi+\bm u,\;\;c\in\mathbb R,\;\bm u\in \mathcal M,\\
&\bm y= r\bm \eta+\bm v,\;\;r\in\mathbb R,\;\bm v\in \mathcal M,\\
\end{split}
\end{equation}
and plugging \eqref{xy} into \eqref{equi1}, we see that $(\bm x,\bm y)$ (defined in \eqref{xy}) is a solution of \eqref{equi1}, if and only if $(c,r, \bm u,\bm v)\in \mathbb R^2\times \mathcal M^2$ solves
\begin{equation}\label{equi2}
\bm F(c,r,\bm u,\bm v,\beta,\la)=(f_1,f_{21},\dots,f_{2n},f_3,f_{41},\dots,f_{4n})^T=\bm 0,
\end{equation}
where
$\bm F(c,r,\bm u,\bm v,\beta,\la):\mathbb R^2\times \mathcal M^2\times \mathbb R^2\to \left(\mathbb R\times \mathcal M\right)^2$,
and
\begin{equation}\label{fi}
\begin{cases}
f_1(c,r,\bm u,\bm v,\beta,\la):=\ds\sum_{j=1}^n\left[a_j-(\beta b_j+1)(c\xi_j+u_j)+(c\xi_j+u_j)^2(r\eta_j+v_j)\right],\\
f_{2j}(c,r,\bm u,\bm v,\beta,\la):=\la\left[a_j-(\beta b_j+1)(c\xi_j+u_j)+(c\xi_j+u_j)^2(r\eta_j+v_j)\right]\\
~~~~~~~~~~~~~~~~~~~~~~~~~~~\ds+\sum_{k=1}^n p_{jk}u_k-\ds\frac{\la}{n} f_1,~~~~~~j=1,\dots,n,\\
f_{3}(c,r,\bm u,\bm v,\beta,\la):=\ds\sum_{j=1}^n\left[\beta b_j(c\xi_j+u_j)-(c\xi_j+u_j)^2(r\eta_j+v_j)\right],\\
f_{4j}(c,r,\bm u,\bm v,\beta,\la):=\la\left[\beta b_j(c\xi_j+u_j)-(c\xi_j+u_j)^2(r\eta_j+v_j)\right]\\
~~~~~~~~~~~~~~~~~~~~~~~~~~~\ds+\theta\sum_{k=1}^n q_{jk}v_k-\ds\frac{\la}{n} f_3,~~~~~~j=1,\dots,n.\\
\end{cases}
\end{equation}

We first solve $\bm F(c,r,\bm u,\bm v,\beta,\la)=\bm 0$ for $\la=0$.
\begin{lemma}\label{l1}
Assume that $\la=0$. For fixed $\beta>0$, $\bm F(c,r,\bm u,\bm v,\beta,\la)=\bm 0$ has a unique solution $(c_0,r_{0\beta},\bm u_0,\bm v_0)\in \mathbb R^2\times \mathcal M^2$, where
\begin{equation}\label{c0rbeta}
c_0 =\sum_{j=1}^n a_j,\;\; r_{0\beta} = \frac{\beta\sum_{j=1}^nb_j\xi_j}{\left(\sum_{j=1}^na_j\right)\left(\sum_{j=1}^n \xi_j^2\eta_j\right)},\;\;\bm u_0=\bm 0,\;\;\bm v_0=\bm 0.
\end{equation}
\end{lemma}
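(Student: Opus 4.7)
The plan is to exploit the fact that setting $\la=0$ decouples the equations in \eqref{fi} into a purely linear part (in $\bm u,\bm v$) and a purely nonlinear scalar part (in $c,r$). When $\la=0$, the equations $f_{2j}=0$ and $f_{4j}=0$ reduce to $P\bm u=\bm 0$ and $\theta Q\bm v=\bm 0$. I would invoke assumption $(\bf A_1)$ together with the Perron-Frobenius theorem for essentially nonnegative irreducible matrices to conclude that $\mathcal N(P)=\mathrm{span}\{\bm\xi\}$ and $\mathcal N(Q)=\mathrm{span}\{\bm\eta\}$, and then use the normalization $\sum_{j=1}^n\xi_j=\sum_{j=1}^n\eta_j=1$ from \eqref{xi} to observe that neither $\bm\xi$ nor $\bm\eta$ lies in $\mathcal M$. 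Hence the constraints $\bm u,\bm v\in\mathcal M$ force $\bm u_0=\bm v_0=\bm 0$.

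With $\bm u=\bm v=\bm 0$, the system collapses to the two scalar equations $f_1=0$ and $f_3=0$ in the unknowns $(c,r)$. My plan is to add these two equations, which cancels both the cubic term $c^2r\sum_{j=1}^n\xi_j^2\eta_j$ and the $\beta$-terms and leaves $\sum_{j=1}^n a_j=c\sum_{j=1}^n\xi_j=c$, yielding $c_0=\sum_{j=1}^n a_j$ (strictly positive since each $a_j>0$). Substituting back into $f_3=0$ then gives a linear equation in $r$ whose leading coefficient $c_0^2\sum_{j=1}^n\xi_j^2\eta_j$ is strictly positive because $\bm\xi,\bm\eta\gg\bm 0$, so $r$ is uniquely determined and a short calculation reproduces the formula for $r_{0\beta}$ displayed in \eqref{c0rbeta}. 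Uniqueness of the whole quadruple follows because each step above determined its unknown unambiguously.

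I expect no genuine obstacle: the decomposition \eqref{xy} is tailored so that the kernels of the linearizations of $f_2$ and $f_4$ at $\la=0$ meet $\mathcal M$ trivially, and the remaining scalar system has an essentially triangular structure after adding $f_1$ and $f_3$. The only point that requires a moment of care is the Perron-Frobenius step identifying $\mathcal N(P)$ and $\mathcal N(Q)$ with the one-dimensional spans of $\bm\xi$ and $\bm\eta$; this uses irreducibility and the simplicity of $s(P)=s(Q)=0$ already recorded after $(\bf A_2)$.
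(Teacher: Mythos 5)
Your proposal is correct and follows essentially the same route as the paper's proof: at $\la=0$ the equations $f_{2j}=f_{4j}=0$ reduce to $P\bm u=\bm 0$, $\theta Q\bm v=\bm 0$, which force $\bm u=\bm v=\bm 0$ since $\mathcal N(P)={\rm span}\{\bm\xi\}$ and $\mathcal N(Q)={\rm span}\{\bm\eta\}$ meet $\mathcal M$ trivially, and then $f_1=f_3=0$ determine $(c,r)$ uniquely exactly as you describe (the paper simply states these steps without spelling out the Perron--Frobenius justification that you supply). No gaps.
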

\begin{proof}
Plugging $\la =0$ into $f_{2j}=f_{4j}=0$ for $j=1,\dots,n$, we have $\bm u=\bm u_0=\bm 0$ and $\bm v=\bm v_0=\bm 0$. Then
plugging $\bm u=\bm v=\bm0$ into $f_1=f_3=0$, we have
\begin{equation*}
\sum_{j=1}^n \left[a_j-c(\beta b_j+1)\xi_j+c^2r\xi_j^2\eta_j\right]=0,\;\;\sum_{j=1}^n \left[c\beta b_j \xi_j-c^2r\xi_j^2\eta_j\right]=0,
\end{equation*}
which implies that
\begin{equation*}
c=c_0 =\sum_{j=1}^n a_j,\;\; r=r_{0\beta} = \frac{\beta\sum_{j=1}^nb_j\xi_j}{\left(\sum_{j=1}^na_j\right)\left(\sum_{j=1}^n \xi_j^2\eta_j\right)}.
\end{equation*}
This completes the proof.
\end{proof}
Now we solve \eqref{equi1} (or equivalently, \eqref{equi2}) for $\la>0$.
\begin{theorem}\label{thlocal}
For any fixed $\beta_1>0$, there exists $\delta_{\beta_1}\in(0,\beta_1)$ and a continuously differentiable mapping
$\left(\bm x^{(\la,\beta)},\bm y^{(\la,\beta)}\right):[0,\delta_{\beta_1}]\times [\beta_1-\delta_{\beta_1},\beta_1+\delta_{\beta_1}]\to \mathbb R^n\times \mathbb R^n $ such that
$(\bm x^{(\la,\beta)},\bm y^{(\la,\beta)})$ is the unique positive solution of \eqref{equi1} for $(\la,\beta)\in (0,\delta_{\beta_1}]\times [\beta_1-\delta_{\beta_1},\beta_1+\delta_{\beta_1}]$.
Moreover,
\begin{equation}\label{xyf}
\bm x^{(\la,\beta)}=c^{(\la,\beta)}\bm\xi+\bm u^{(\la,\beta)},\;\; \bm y^{(\la,\beta)}=r^{(\la,\beta)}\bm\eta+\bm v^{(\la,\beta)},
\end{equation}
where
$\left(c^{(\la,\beta)},r^{(\la,\beta)},\bm u^{(\la,\beta)},\bm v^{(\la,\beta)}\right)\in \mathbb R^2\times \mathcal M^2$ solves
Eq. \eqref{equi2} for $(\la,\beta)\in [0,\delta_{\beta_1}]\times [\beta_1-\delta_{\beta_1},\beta_1+\delta_{\beta_1}]$, and
\begin{equation}\label{limcruv}
\left(c^{(0,\beta)},r^{(0,\beta)},\bm u^{(0,\beta)},\bm v^{(0,\beta)}\right)=(c_0,r_{0\beta},\bm 0,\bm 0)\;\;\text{for}\;\;
\beta\in[\beta_1-\delta_{\beta_1},\beta_1+\delta_{\beta_1}],
\end{equation}
with $c_0$ and $r_{0\beta}$ defined in Lemma \ref{l1}.
\end{theorem}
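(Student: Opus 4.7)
The plan is to prove Theorem \ref{thlocal} via the implicit function theorem applied to $\bm F(c,r,\bm u,\bm v,\beta,\la)=\bm 0$ at the base point $(c_0,r_{0\beta_1},\bm 0,\bm 0,\beta_1,0)$, which is a solution by Lemma \ref{l1}. Since each component of $\bm F$ is a polynomial in $(c,r,\bm u,\bm v,\beta,\la)$, smoothness is free; the only non-trivial task is to show that the Fr\'echet derivative
$$T:=D_{(c,r,\bm u,\bm v)}\bm F\bigl(c_0,r_{0\beta_1},\bm 0,\bm 0,\beta_1,0\bigr)\colon \mathbb R^2\times \mathcal M^2\longrightarrow \mathbb R^2\times \mathcal M^2$$
is a bijection. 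The key observation is that at $\la=0$ the reaction parts of $f_{2j}$ and $f_{4j}$ drop out, so $T$ acquires a block-triangular structure with respect to the splitting $(c,r)$ and $(\bm u,\bm v)$: the linearization of $(\bm f_2,\bm f_4)$ in $(c,r)$ vanishes, while the linearization of $(\bm f_2,\bm f_4)$ in $(\bm u,\bm v)$ reduces to $(\bm u,\bm v)\mapsto (P\bm u,\theta Q\bm v)$.

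I would then verify the two diagonal blocks are invertible. For the $(c,r)$-block, plugging $\bm u=\bm v=\bm 0$ into $f_1$ and $f_3$ and using $\sum_j\xi_j=1$ gives
\begin{equation*}
f_1=\sum_{j=1}^n a_j-c(\beta\textstyle\sum_j b_j\xi_j+1)+c^2 r\sum_j\xi_j^2\eta_j,\qquad f_3=c\beta\sum_j b_j\xi_j-c^2 r\sum_j\xi_j^2\eta_j,
\end{equation*}
and a short computation at $(c,r)=(c_0,r_{0\beta_1})$ shows that the Jacobian of $(f_1,f_3)$ in $(c,r)$ has determinant $\bigl(\sum_j a_j\bigr)^2\bigl(\sum_j \xi_j^2\eta_j\bigr)>0$, which uses crucially the explicit form of $r_{0\beta_1}$ from \eqref{c0rbeta}. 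For the $(\bm u,\bm v)$-block, assumption $(\bf A_1)$ and Perron--Frobenius imply that $\ker P=\mathrm{span}\{\bm\xi\}$; since $\sum_j\xi_j=1\neq 0$, we have $\bm\xi\notin \mathcal M$, so $P|_{\mathcal M}$ has trivial kernel. Because column sums of $P$ vanish, $P$ maps $\mathbb R^n$ into $\mathcal M$, so $P|_{\mathcal M}\colon \mathcal M\to \mathcal M$ is an injective endomorphism of a finite-dimensional space, hence an isomorphism; the same reasoning applies to $\theta Q|_{\mathcal M}$.

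With $T$ invertible, the implicit function theorem yields $\delta_{\beta_1}>0$ and a unique $C^1$ mapping $(\la,\beta)\mapsto (c^{(\la,\beta)},r^{(\la,\beta)},\bm u^{(\la,\beta)},\bm v^{(\la,\beta)})$ on $[0,\delta_{\beta_1}]\times [\beta_1-\delta_{\beta_1},\beta_1+\delta_{\beta_1}]$ satisfying \eqref{equi2} with the correct value at $\la=0$ given by \eqref{limcruv}. Defining $(\bm x^{(\la,\beta)},\bm y^{(\la,\beta)})$ through \eqref{xyf} then produces a solution of \eqref{equi1}, unique in a neighbourhood of $(c_0\bm\xi,r_{0\beta_1}\bm\eta)$. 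Positivity is then obtained for free: at $\la=0$ the limit equals $(c_0\bm\xi,r_{0\beta_1}\bm\eta)\gg\bm 0$ by the strong positivity of $\bm\xi,\bm\eta$ in \eqref{xi} and positivity of $c_0, r_{0\beta_1}$, so after shrinking $\delta_{\beta_1}$ if necessary, continuity of the mapping preserves strict positivity on the whole parameter box.

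The main obstacle I anticipate is not the IFT mechanism itself but the bookkeeping behind invertibility of $T$: one must handle the fact that $\bm F$ takes values in the mixed space $(\mathbb R\times\mathcal M)^2$, justify why the formulas for $f_{2j}$ and $f_{4j}$ automatically lie in $\mathcal M$ (this is the purpose of the $-(\la/n)f_1$ and $-(\la/n)f_3$ corrections in \eqref{fi} together with $P\bm u,Q\bm v\in\mathcal M$), and then verify the invertibility of both diagonal blocks in the correct spaces. Once that framework is set up, the remaining computations are a determinant calculation and an appeal to Perron--Frobenius, neither of which is deep.
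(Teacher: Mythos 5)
Your existence argument coincides with the paper's: apply the implicit function theorem to $\bm F$ at $(c_0,r_{0\beta_1},\bm 0,\bm 0,\beta_1,0)$, exploit the block-triangular structure of the derivative at $\la=0$, check that $P|_{\mathcal M}$ and $\theta Q|_{\mathcal M}$ are isomorphisms of $\mathcal M$ (via Perron--Frobenius and the vanishing column sums), and compute the $2\times 2$ determinant of the $(c,r)$-block; the positivity by continuity is also handled the same way. (A small aside: the determinant equals $\left(\sum_j a_j\right)^2\sum_j\xi_j^2\eta_j$ simply because the two rows of the $(c,r)$-block sum to $(-\sum_j\xi_j,\,0)=(-1,0)$, so the explicit value of $r_{0\beta_1}$ is not actually needed there.)

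However, there is a genuine gap in the uniqueness part. The theorem asserts that $(\bm x^{(\la,\beta)},\bm y^{(\la,\beta)})$ is the unique \emph{positive} solution of \eqref{equi1} on the whole parameter box, i.e.\ uniqueness among all positive vectors in $\mathbb R^n_{>0}\times\mathbb R^n_{>0}$, whereas the implicit function theorem only gives uniqueness within a neighbourhood of $(c_0\bm\xi,r_{0\beta_1}\bm\eta)$ --- which is exactly what you state and then stop. Nothing in your argument excludes a second positive solution lying far from the branch. The paper closes this gap with an a priori compactness argument occupying the second half of its proof: for any positive solution, summing $f_1=f_3=0$ forces $\tilde c^{(\la,\beta)}=\sum_j\tilde x_j^{(\la,\beta)}=\sum_j a_j$, hence $\bm{\tilde u}^{(\la,\beta)}$ is bounded; passing to subsequences and using $f_{2j}=0$ gives $P\bm u^*=\bm 0$, so $\bm u^*=\bm 0$; the identity $\sum_j\beta b_j\tilde x_j=\sum_j(\tilde x_j)^2\tilde y_j$ then bounds $\bm{\tilde y}^{(\la,\beta)}$, and the analogous limits in $f_{4j}$ and $f_3$ force $\bm v^*=\bm 0$ and $r^*=r_{0\beta_1}$. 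Thus every positive solution converges to the base point as $(\la,\beta)\to(0,\beta_1)$ and therefore must eventually enter the IFT neighbourhood, where it coincides with the constructed branch. You need some version of this argument (or another mechanism ruling out positive solutions away from the branch) for the uniqueness claim, and also to justify the identity \eqref{limcruv} uniformly over the whole interval $[\beta_1-\delta_{\beta_1},\beta_1+\delta_{\beta_1}]$ rather than only at $\beta=\beta_1$.
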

\begin{proof}
We first show the existence.
It follows from Lemma \ref{l1} that $$\bm F(c_0,r_{0\beta_1},\bm 0,\bm 0,\beta_1,0)=\bm 0,$$
where $\bm F$ is defined in \eqref{equi2}. A direct computation implies that
the Fr\'echet derivative of $\bm F$ with respect to $(c,r,\bm u,\bm v)$ at $(c_0,r_{0\beta_1},\bm 0,\bm 0,\beta_1,0)$ is as follows:
\begin{equation*}%\label{equi3}
\bm G(\tilde c,\tilde r,\bm {\tilde u},\bm {\tilde v})=(g_1,g_{21},\dots,g_{2n},g_3,g_{41},\dots,g_{4n})^T,
\end{equation*}
where $\tilde c,\tilde r\in\mathbb R$, $\bm {\tilde u},\bm {\tilde v} \in\mathcal M$, and
\begin{equation*}
\begin{cases}
\ds g_1(\tilde c,\tilde r,\bm {\tilde u},\bm {\tilde v}):=\sum_{j=1}^n\left[ { \left(2 c_0 r_{0\beta_1} \xi_j \eta_j - \beta_1 b_j - 1\right){(\tilde c \xi_j+\tilde u_j)} + {(c_0 \xi_j)}^2 }{(\tilde r \eta_j+\tilde v_j)}\right],\\
\ds g_{2j} (\tilde c,\tilde r,\bm {\tilde u},\bm {\tilde v}):=\sum_{k= 1}^n {p_{jk}\tilde u_k},~~~~~~j=1,\dots,n,\\
\ds g_3(\tilde c,\tilde r,\bm {\tilde u},\bm {\tilde v}):=\sum_{j=1}^n \left[ { \left(\beta_1 b_j  - 2 c_0 r_{0\beta_1} \xi_j \eta_j \right){(\tilde c \xi_j+\tilde u_j)} - {(c_0 \xi_j)}^2 }{(\tilde r \eta_j+\tilde v_j)}\right],\\
\ds g_{4j} (\tilde c,\tilde r,\bm {\tilde u},\bm {\tilde v}):=\theta \sum_{k = 1}^n {q_{jk}\tilde v_k},~~~~~~j=1,\dots,n.\\
\end{cases}
\end{equation*}
If $\bm G(\tilde c,\tilde r,\bm {\tilde u},\bm {\tilde v})=\bm 0$, then $\bm {\tilde u}=\bm 0$ and $\bm {\tilde v}=\bm 0$. Plugging $\bm {\tilde u}=\bm {\tilde v}=\bm 0$ into $g_1=g_3=0$, we have
\begin{equation*}
\left( \begin{array}{cc}
\sum_{j=1}^n \left(2 c_0 r_{0\beta_1} \xi_j \eta_j - \beta_1 b_j - 1\right) \xi_j  &\sum_{j=1}^n {(c_0 \xi_j)}^2 \eta_j\\
\sum_{j=1}^n \left(\beta_1 b_j  - 2 c_0 r_{0\beta_1} \xi_j \eta_j \right) \xi_j   &- \sum_{j=1}^n {(c_0 \xi_j)}^2 \eta_j
\end{array} \right)\left( {\begin{array}{*{20}{c}}
\tilde c\\
\tilde r
\end{array}} \right) = \left( {\begin{array}{*{20}{c}}
0\\
0
\end{array}} \right).
\end{equation*}
Noticing that
\begin{equation*}
\left| \begin{array}{cc}
\sum_{j=1}^n \left(2 c_0 r_{0\beta_1} \xi_j \eta_j - \beta_1 b_j - 1\right) \xi_j &\sum_{j=1}^n {(c_0 \xi_j)}^2 \eta_j\\
\sum_{j=1}^n \left(\beta_1 b_j  - 2 c_0 r_{0\beta_1} \xi_j \eta_j \right) \xi_j  & - \sum_{j=1}^n {(c_0 \xi_j)}^2 \eta_j
\end{array} \right| \ne 0,
\end{equation*}
we obtain that $\tilde c=0$ and $\tilde r=0$. Therefore, $\bm G$ is bijective.

It follows from the implicit function theorem that there exist $ \delta_{\beta_1} \in(0,\beta_1)$
and a continuously differentiable mapping
\begin{equation*}
(\la ,\beta)\in [0, \delta_{\beta_1}] \times [\beta_1-\delta_{\beta_1}, \beta_1+\delta_{\beta_1}]   \mapsto \left(c^{(\la ,\beta)},r^{(\la ,\beta)},\bm u^{(\la ,\beta)},\bm v^{(\la ,\beta)}\right)\in \mathbb R^2\times \mathcal M^2
\end{equation*}
such that $\bm F\left(c^{(\la ,\beta)},r^{(\la,\beta)},\bm u^{(\la ,\beta)},\bm v^{(\la ,\beta)},\beta,\la\right)=\bm 0$ and
\begin{equation*}
\left(c^{(0 ,\beta_1)},r^{(0,\beta_1)},\bm u^{(0,\beta_1)},\bm v^{(0,\beta_1)}\right)=(c_0,r_{0\beta_1},\bm 0,\bm 0).
\end{equation*}
Then we can choose $\delta_{\beta_1}>0$ (sufficiently small) such that
$\left(\bm x^{(\la,\beta)},\bm y^{(\la,\beta)}\right)$ (defined in \eqref{xyf}) is a positive solution of \eqref{equi1} for $(\la,\beta)\in (0, \delta_{\beta_1}] \times [\beta_1-\delta_{\beta_1}, \beta_1+\delta_{\beta_1}]$.

Now, we show the uniqueness. From the implicit function theorem, we only need to verify that if
$\left(\bm {\tilde x}^{(\la,\beta)},\bm {\tilde y}^{(\la,\beta)}\right)$ is a positive solution of \eqref{equi1},
where
\begin{equation*}%\label{xy2}
\bm{\tilde  x}^{(\la ,\beta)}=\tilde c^{(\la ,\beta)}\bm \xi+\bm {\tilde u}^{(\la ,\beta)},\;
\bm{\tilde  y}^{(\la ,\beta)}=\tilde  r^{(\la ,\beta)}\bm \eta+\bm{\tilde  v}^{(\la ,\beta)},\;\tilde c^{(\la ,\beta)},\tilde  r^{(\la ,\beta)}\in\mathbb{R},\;\bm {\tilde u}^{(\la ,\beta)},\bm{\tilde  v}^{(\la ,\beta)}\in\mathcal M,
\end{equation*}
then $\left(\tilde c^{(\la ,\beta)},\tilde r^{(\la ,\beta)},\bm{\tilde u}^{(\la ,\beta)},\bm {\tilde v}^{(\la ,\beta)}\right) \to (c_0, r_{0\beta_1}, \bm 0,\bm 0 )$ as $(\la ,\beta) \to (0,\beta_1)$. Substituting $(c,r,\bm u,\bm v)=\left(\tilde c^{(\la ,\beta)},\tilde r^{(\la ,\beta)},\bm{\tilde u}^{(\la ,\beta)},\bm {\tilde v}^{(\la ,\beta)}\right)$ into \eqref{equi2}, we see from the first and third equation of \eqref{fi} that
\begin{equation*}
\tilde c^{(\la ,\beta)}=\sum_{j=1}^n {\tilde x_j^{(\la ,\beta)}}= \sum_{j=1}^n a_j,
\end{equation*}
which implies that $\bm{\tilde  u}^{(\la ,\beta)}$ is bounded in $\mathbb R^n$. Then, up to a subsequence, we assume that
\begin{equation*}
\lim_{(\la,\beta)\to (0,\beta_1)}\bm{\tilde u}^{(\la, \beta)}= \bm u^*\in \mathcal M.
 \end{equation*}
From the third equation of \eqref{fi}, we have
\begin{equation}\label{multi}
\sum_{j=1}^n \beta b_j \tilde x_j^{(\la ,\beta)}= \sum_{j=1}^n  \left(\tilde x_j^{(\la ,\beta)}\right)^2 \tilde y_j^{(\la ,\beta)},
 \end{equation}
 which implies that $\left\{\left(\tilde x_j^{(\la ,\beta)}\right)^2\tilde y_j^{(\la ,\beta)}\right\}_{j=1}^n$ are bounded.
 Taking the limit of $$f_{2j}\left(\tilde c^{(\la, \beta)},\tilde r^{(\la, \beta)},\bm{\tilde  u}^{(\la, \beta)},\bm {\tilde v}^{(\la, \beta)},\beta,\la\right)=0 \;\;\text{for}\;\;j=1,\dots,n,$$ as $(\la,\beta)\to(0,\beta_1)$, we see
 that $P\bm u^*=\bm 0$, which yields $\bm u^*=\bm 0$. Therefore,
 \begin{equation}\label{limuc}
\lim_{(\la,\beta)\to(0,\beta_1)}\bm {\tilde u}^{(\la, \beta)} =\bm 0\;\;\;\;\text{and}\;\;\lim_{(\la,\beta)\to(0,\beta_1)}\tilde c^{(\la, \beta)} =  c_0.
 \end{equation}

It follows from \eqref{multi} and \eqref{limuc} that
$\bm{\tilde y}^{(\la ,\beta)}$ is also bounded in $\mathbb R^n$. Then, up to a subsequence, we assume that
\begin{equation*}
\lim_{(\la,\beta)\to(0,\beta_1)}\tilde r^{(\la ,\beta)}=r^*\ge0\;\;\;\;\text{and}\;\lim_{(\la,\beta)\to(0,\beta_1)}\bm {\tilde v}^{(\la,\beta)}=\bm v^*\in\mathcal M.
\end{equation*}
 Taking the limit of
 \begin{equation*}
f_{4j}\left(\tilde c^{(\la, \beta)},\tilde r^{(\la, \beta)},\bm{\tilde u}^{(\la, \beta)},\bm{\tilde v}^{(\la, \beta)},\beta,\la\right)=0 \;\;\text{for}\;\;j=1,\dots,n,
 \end{equation*}
 as $(\la,\beta)\to(0,\beta_1)$, we see
 that $Q\bm v^*=\bm 0$, which yields $\bm v^*=\bm 0$.
Consequently, taking the limit of
$$f_3\left(\tilde c^{(\la, \beta)},\tilde r^{(\la, \beta)},\bm{\tilde u}^{(\la, \beta)},\bm{\tilde v}^{(\la, \beta)},\beta,\la\right)=0$$
as $(\la,\beta)\to(0,\beta_1)$, we have $r^*=r_{0\beta_1}$.
 Therefore,
 \begin{equation*}
\lim_{(\la,\beta)\to(0,\beta_1)}\bm {\tilde v}^{(\la, \beta)}= \bm 0\;\;\;\;\text{and}\;\;\lim_{(\la,\beta)\to(0,\beta_1)}\tilde r^{(\la, \beta)} =  r_{0\beta_1}.
 \end{equation*}

Finally, we need to show that \eqref{limcruv} holds. We can use the similar arguments as in the proof of the uniqueness, and here
we omit the proof.
\end{proof}
In the above Theorem \ref{thlocal}, we solve \eqref{equi1} when $\beta$ is in a small neighborhood of a given positive constant $\beta_1$. In the following, we will consider the solution of \eqref{equi1} for a wider range of $\beta$.
\begin{theorem}\label{thglobal}
Let $\mathcal B:=[\epsilon,1/\epsilon]$, where $\epsilon>0$ is sufficiently small. Then there exists $\delta_{\epsilon} >0$ and a continuously differentiable mapping $\left(\bm x^{(\la,\beta)},\bm y^{(\la,\beta)}\right):[0,\delta_\epsilon]\times \mathcal B\to \mathbb R^n\times \mathbb R^n $ such that
$(\bm x^{(\la,\beta)},\bm y^{(\la,\beta)})$ is the unique positive solution of
 \eqref{equi1} for $(\la,\beta)\in (0,\delta_\epsilon]\times \mathcal B$.
Moreover,
\begin{equation}\label{xyf2}
\bm x^{(\la,\beta)}=c^{(\la,\beta)}\bm\xi+\bm u^{(\la,\beta)},\;\; \bm y^{(\la,\beta)}=r^{(\la,\beta)}\bm\eta+\bm v^{(\la,\beta)},
\end{equation}
where
$\left(c^{(\la,\beta)},r^{(\la,\beta)},\bm u^{(\la,\beta)},\bm v^{(\la,\beta)}\right)\in \mathbb R^2\times \mathcal M^2$ solves Eq. \eqref{equi2} for $(\la,\beta)\in [0,\delta_\epsilon]\times \mathcal B$,
and
\begin{equation}\label{limcruv3}
\left(c^{(0,\beta)},r^{(0,\beta)},\bm u^{(0,\beta)},\bm v^{(0,\beta)}\right)=(c_0,r_{0\beta},\bm 0,\bm 0) \;\;\text{for}\;\;\beta\in\mathcal B,
\end{equation}
with $c_0$ and $r_{0\beta}$ defined in Lemma \ref{l1}.
\end{theorem}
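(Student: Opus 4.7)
The plan is to promote the local existence and uniqueness of Theorem \ref{thlocal} to the compact parameter interval $\mathcal{B}=[\epsilon,1/\epsilon]$ by a standard compactness-plus-gluing argument. The point is that Theorem \ref{thlocal} already supplies, at every $\beta_1\in\mathcal{B}$, a local chart on which \eqref{equi1} has a unique positive solution with the correct limit as $\lambda\to0$; the only thing left to arrange is that these charts fit together into a single mapping defined on $[0,\delta_\epsilon]\times\mathcal{B}$ for one common $\delta_\epsilon>0$.

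First, for each $\beta_1\in\mathcal{B}$ I invoke Theorem \ref{thlocal} to obtain $\delta_{\beta_1}>0$ and a continuously differentiable mapping
$(c_{\beta_1}^{(\la,\beta)},r_{\beta_1}^{(\la,\beta)},\bm u_{\beta_1}^{(\la,\beta)},\bm v_{\beta_1}^{(\la,\beta)})$
solving \eqref{equi2} on $[0,\delta_{\beta_1}]\times[\beta_1-\delta_{\beta_1},\beta_1+\delta_{\beta_1}]$ and satisfying \eqref{limcruv}. The open intervals $(\beta_1-\delta_{\beta_1},\beta_1+\delta_{\beta_1})$ form an open cover of $\mathcal{B}$, so by Heine--Borel there is a finite subcover indexed by $\beta_1^{(1)},\dots,\beta_1^{(N)}$. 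I then set
$\delta_\epsilon:=\min_{1\le i\le N}\delta_{\beta_1^{(i)}}>0$,
which gives a uniform lower bound on how far into $\lambda>0$ each local chart extends.

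Next, I glue the charts. On the overlap of two neighborhoods corresponding to $\beta_1^{(i)}$ and $\beta_1^{(j)}$, both mappings yield, via \eqref{xyf2}, positive solutions of \eqref{equi1}. The uniqueness portion of Theorem \ref{thlocal}, applied at any point in the overlap, forces the two to coincide (shrinking $\delta_\epsilon$ further if necessary, which does not harm the argument since it remains positive). Consequently the local mappings patch into a single continuously differentiable map $(c^{(\la,\beta)},r^{(\la,\beta)},\bm u^{(\la,\beta)},\bm v^{(\la,\beta)})$ on $[0,\delta_\epsilon]\times\mathcal{B}$, and the prescribed value at $\la=0$ given in \eqref{limcruv3} is inherited directly from \eqref{limcruv}. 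Setting $\bm x^{(\la,\beta)}$ and $\bm y^{(\la,\beta)}$ by \eqref{xyf2} yields the positive solution claimed.

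The main subtlety will be the global uniqueness statement. Given $(\la,\beta)\in(0,\delta_\epsilon]\times\mathcal{B}$ and any positive solution $(\bm{\tilde x}^{(\la,\beta)},\bm{\tilde y}^{(\la,\beta)})$ of \eqref{equi1}, I must verify it agrees with the glued mapping. Choosing an index $i$ with $\beta\in[\beta_1^{(i)}-\delta_{\beta_1^{(i)}},\beta_1^{(i)}+\delta_{\beta_1^{(i)}}]$, I would repeat the limit argument carried out in the uniqueness half of Theorem \ref{thlocal}: the bound $\tilde c^{(\la,\beta)}=\sum_j a_j$ provided by summing $f_{2j}=0$ controls $\bm{\tilde u}^{(\la,\beta)}$, then \eqref{multi} controls $\bm{\tilde y}^{(\la,\beta)}$, and passing to the limit in $f_{2j}$, $f_{4j}$, $f_3$ identifies any cluster point with $(c_0,r_{0\beta_1^{(i)}},\bm 0,\bm 0)$, after which the implicit function theorem forces coincidence with the local chart. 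I expect no genuinely new analytic content beyond this; the only care needed is to keep shrinking $\delta_\epsilon$ uniformly across the finitely many patches, which is possible precisely because the cover is finite.
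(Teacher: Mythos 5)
Your proposal is correct and follows essentially the same route as the paper: cover the compact interval $\mathcal B$ by the local intervals from Theorem \ref{thlocal}, extract a finite subcover, take $\delta_\epsilon$ as the minimum of the finitely many local $\delta$'s, and use the local uniqueness to glue the charts and to transfer \eqref{limcruv} into \eqref{limcruv3}. The extra care you devote to coincidence on overlaps and to global uniqueness is exactly what the paper compresses into ``by the similar arguments as in the proof of Theorem \ref{thlocal}''.
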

\begin{proof}
It follows from Theorem \ref{thlocal}, for any $\tilde \beta\in\mathcal B$, there exists $\delta_{\tilde \beta}\in(0,\tilde \beta)$ such that, for $(\la,\beta)\in (0,\delta_{\tilde \beta}]\times [\tilde\beta-\delta_{\tilde \beta},\tilde\beta+\delta_{\tilde \beta}]$,
 \eqref{equi1} admits a unique positive solution
$(\bm x^{(\la,\beta)},\bm y^{(\la,\beta)})$,
where $\bm x^{(\la,\beta)}$ and $\bm y^{(\la,\beta)}$ are defined in \eqref{xyf} and continuously differentiable for $(\la,\beta)\in [0,\delta_{\tilde \beta}]\times [\tilde\beta-\delta_{\tilde \beta},\tilde\beta+\delta_{\tilde \beta}]$.
Clearly,
$$\mathcal B \subseteq \bigcup_{\tilde \beta\in\mathcal B}  {\left( {\tilde \beta - \delta_{\tilde \beta},\tilde \beta +\delta_{\tilde \beta}} \right)}. $$
Noticing that $\mathcal B$ is compact, we see that there exist finite open intervals (see Fig. \ref{fig1}), denoted by $\left( {\tilde \beta_l} - \delta_{\tilde \beta_l},{\tilde \beta_l} +\delta_{\tilde \beta_l} \right)$ for $l=1,\dots,s$, such that

$$\mathcal B \subseteq \bigcup_{l=1}^s  {\left( {\tilde \beta_l} - \delta_{\tilde \beta_l},{\tilde \beta_l} +\delta_{\tilde \beta_l} \right)}.$$
Choose $ \delta_\epsilon= \min_{1 \le l \le s} \delta _{\tilde \beta_l}$. Then, for $(\la,\beta)\in (0,\delta_\epsilon]\times \mathcal B$,
 \eqref{equi1} admits a unique positive solution $(\bm x^{(\la,\beta)},\bm y^{(\la,\beta)})$, which is defined in \eqref{xyf2} and continuously differentiable for $(\la,\beta)\in [0,\delta_\epsilon]\times \mathcal B$.
Eq. \eqref{limcruv3} can be proved by using the similar arguments as in the proof of Theorem \ref{thlocal}, and here we omit the proof.
\end{proof}
\begin{figure}[htbp]
\centering
\setlength{\abovecaptionskip}{-1cm}
\includegraphics[width=0.5\textwidth]{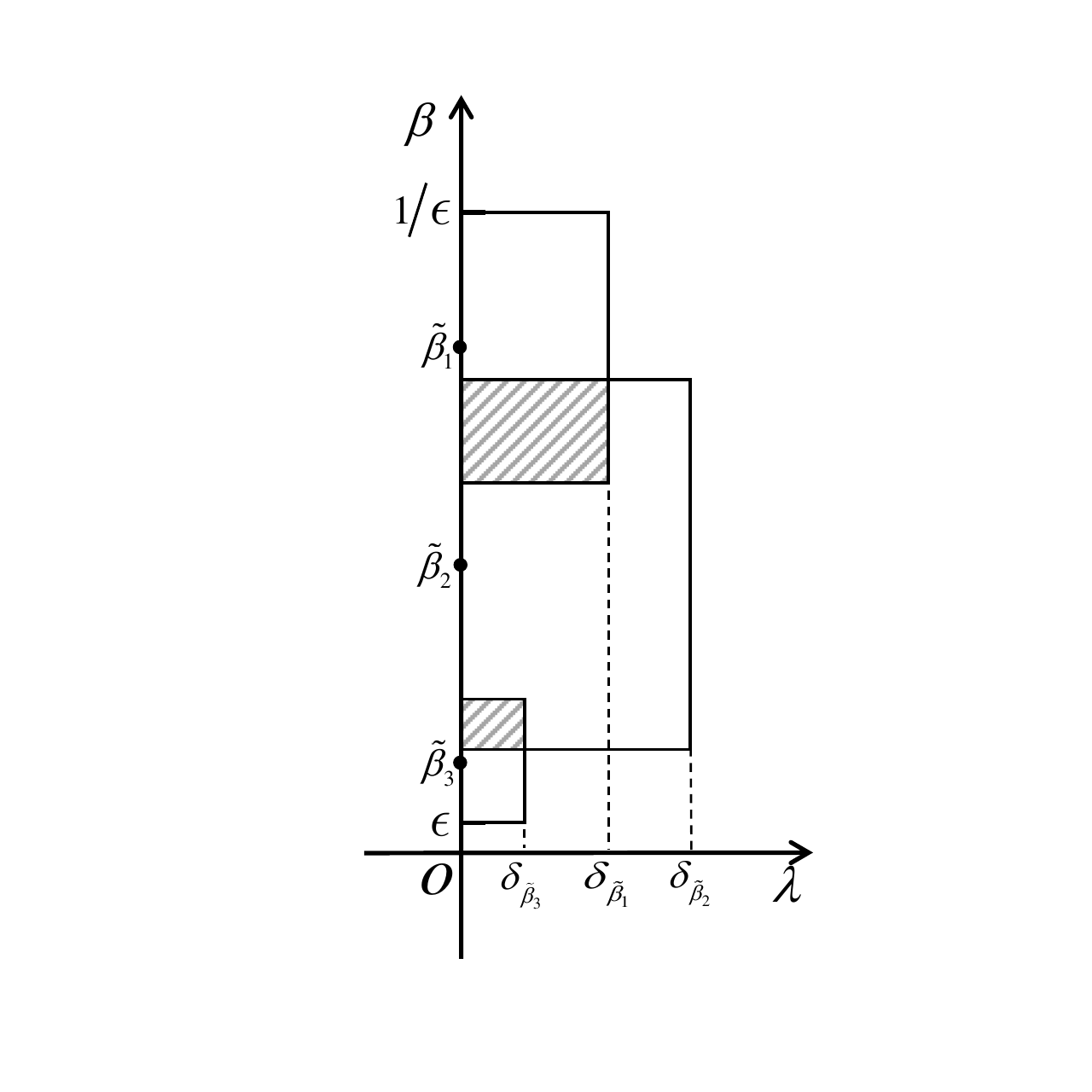}
\caption{Finite open intervals covering $\mathcal B$. \label{fig1}}
\end{figure}

\section{Stability and Hopf bifurcation}

Throughout this section, we assume that
$\beta\in\mathcal B$, where $\mathcal B$ is defined in Theorem \ref{thglobal}.
 It follows from Theorem \ref{thglobal} that there exists $\delta_{\epsilon} >0$ such that, for $(\la,\beta)\in (0,\delta_\epsilon]\times \mathcal B$,
 \eqref{m2} admits a unique positive equilibrium
$(\bm x^{(\la,\beta)},\bm y^{(\la,\beta)})$,
where $\bm x^{(\la,\beta)}$ and $\bm y^{(\la,\beta)}$ are defined in \eqref{xyf2}.
Here we use $\beta$ as the bifurcation parameter, and we will show that
there exists a Hopf bifurcation curve $\beta=\beta_\la$ when $\la$ is small, see Fig. \ref{fig2}.

Linearizing model \eqref{m2} at $(\bm x^{(\la ,\beta)}, \bm y^{(\la ,\beta)})$, we obtain
\begin{equation*}%\label{mathb}
\begin{cases}
\ds\frac{d \tilde x_j}{d t}= \sum_{k=1}^np_{jk}\tilde x_k +\la \left[ M_{1j}^{(\la,\beta)}\tilde x_j +M_{2j}^{(\la,\beta)} \tilde y_j \right], ~~~~j=1,\dots,n,\;t>0,\\
\ds\frac{d \tilde y_j}{d t} = \theta\sum_{k=1}^n q_{jk} \tilde y_k  + \la \left[ M_{3j}^{(\la,\beta)}\tilde x_j -M_{2j} ^{(\la,\beta)}\tilde y_j\right],~~~~j=1,\dots,n,\;t>0, \\
\end{cases}
\end{equation*}
where
\begin{equation}\label{Mi}
M_{1j}^{(\la,\beta)}= 2x_j^{(\la ,\beta)}y_j^{(\la ,\beta)} - \beta b_j - 1,\;\;M_{2j}^{(\la,\beta)}={\left( x_j^{(\la ,\beta)} \right)}^2,\;\;
M_{3j}^{(\la,\beta)}=\beta b_j - 2x_j^{(\la ,\beta)}y_j^{(\la ,\beta)}.
\end{equation}
Let\begin{equation}\label{ph1}
 A_\beta(\la) : = \left( {\begin{array}{*{20}{c}}
P&0\\
0&\theta Q
\end{array}} \right) + \la\left( {\begin{array}{*{20}{c}}
M_1&M_2\\
M_3&- M_2
\end{array}} \right),
\end{equation}
where $ M_1={\rm diag}\left(M_{1j}^{(\la,\beta)}\right),M_2={\rm diag}\left(M_{2j}^{(\la,\beta)}\right)$ and $M_3={\rm diag}\left(M_{3j}^{(\la,\beta)}\right)$. Then, $\mu \in \mathbb C$ is an eigenvalue of $A_{\beta}(\la)$ if there exists $ (\bm\varphi,\bm \psi)^T(\ne\bm 0)\in \mathbb C^{2n}$ such that
\begin{equation}\label{m4}
\begin{cases}
\ds\mu \varphi _j= \sum_{k=1}^np_{jk}\varphi _k+\la \left[ M_{1j}^{(\la,\beta)} \varphi _j + M_{2j}^{(\la,\beta)} \psi _j \right], &j=1,\dots,n,\\
\ds\mu \psi _j = \theta\sum_{k=1}^n q_{jk} \psi _k + \la \left[ M_{3j}^{(\la,\beta)} \varphi _j -M_{2j}^{(\la,\beta)} \psi _j \right],&j=1,\dots,n. \\
\end{cases}
\end{equation}
%Then, \eqref{m4} can be rewritten as
%\begin{equation*}
%\Delta(\la,\beta,\mu )(\bm\varphi,\bm \psi)^T=\bm 0,
%\end{equation*}
%where
%\begin{equation*}
%\Delta(\la,\beta,\mu ):= \left( {\begin{array}{*{20}{c}}
%P&0\\
%0&Q
%\end{array}} \right) + \la\left( {\begin{array}{*{20}{c}}
%M_1&M_2\\
%M_3&- M_2
%\end{array}} \right)-\left( {\begin{array}{*{20}{c}}
%\mu&0\\
%0&\mu
%\end{array}} \right).
%\end{equation*}

For further application, we first give a \textit{priori} estimates for solutions of eigenvalue problem \eqref{m4}.

\begin{lemma} \label{l3}
Let $\mathcal B$ and $\delta_\epsilon$ are defined in Theorem \ref{thglobal}.
Assume that, for $\la\in(0,\delta_\epsilon]$, $(\mu_\la ,\beta_\la,\bm \varphi_\la,\bm \psi_\la)$ solves \eqref{m4} with $\mathcal Re \mu_\la\ge0$, $(\bm\varphi_\la,\bm \psi_\la)^T(\ne\bm 0)\in \mathbb C^{2n}$ and $\beta_\la\in \mathcal B$. Then there exists $\la_1\in(0,\delta_\epsilon)$ such that $\left|\mu_\la/\la\right|$ is bounded for $\la \in (0, {\la}_1]$.
\end{lemma}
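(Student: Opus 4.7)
\medskip
\noindent\textbf{Proof proposal.} The plan is to show that, after normalizing the eigenfunction, every zero-real-part eigenvalue must live in the ``slow'' eigenspace that at $\la=0$ equals $\mathrm{span}\{\bm\xi\}\oplus\mathrm{span}\{\bm\eta\}$, on which the perturbation acts at order $\la$; this will force $|\mu_\la|=O(\la)$. Concretely I would normalize $\|(\bm\varphi_\la,\bm\psi_\la)\|_2=1$ and split
\begin{equation*}
\bm\varphi_\la=c_\la\bm\xi+\bm u_\la,\qquad \bm\psi_\la=r_\la\bm\eta+\bm v_\la,\qquad \bm u_\la,\bm v_\la\in\mathcal M_{\mathbb C},
\end{equation*}
using the direct-sum decomposition $\mathbb C^n=\mathrm{span}\{\bm\xi\}\oplus\mathcal M_{\mathbb C}=\mathrm{span}\{\bm\eta\}\oplus\mathcal M_{\mathbb C}$, with scalar projector $\ell(\bm z)=\sum_j z_j$ (which is $1$ on $\bm\xi$ and $\bm\eta$, and vanishes on $\mathcal M$).

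Applying $\ell$ to the two equations in \eqref{m4} and using the column-sum identity $\sum_j p_{jk}=\sum_j q_{jk}=0$ from assumption $(\mathbf{A}_1)$, the nonlocal terms drop out and I obtain the two scalar identities
\begin{equation*}
\mu_\la c_\la=\la\sum_{j=1}^{n}\left[M_{1j}^{(\la,\beta_\la)}\varphi_{\la,j}+M_{2j}^{(\la,\beta_\la)}\psi_{\la,j}\right],\quad
\mu_\la r_\la=\la\sum_{j=1}^{n}\left[M_{3j}^{(\la,\beta_\la)}\varphi_{\la,j}-M_{2j}^{(\la,\beta_\la)}\psi_{\la,j}\right].
\end{equation*}
Subtracting the $\bm\xi$- (resp.\ $\bm\eta$-) component of each equation, the $\mathcal M_{\mathbb C}$-components become
\begin{equation*}
(\mu_\la I-P)\bm u_\la=\la\bm R_\la^{(1)},\qquad (\mu_\la I-\theta Q)\bm v_\la=\la\bm R_\la^{(2)},
\end{equation*}
where $\bm R_\la^{(1)},\bm R_\la^{(2)}\in\mathcal M_{\mathbb C}$ have norms bounded by a constant independent of $\la$ (by normalization and by the uniform bound on $M_{ij}^{(\la,\beta_\la)}$ coming from Theorem \ref{thglobal}).

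Next I would use Perron--Frobenius plus irreducibility to conclude that the restrictions $P|_{\mathcal M_{\mathbb C}}$ and $\theta Q|_{\mathcal M_{\mathbb C}}$ have spectra strictly contained in $\{\mathcal Re\,\mu<0\}$; since $|\mu_\la|$ is a priori bounded by $\|A_{\beta_\la}(\la)\|$ and $\mathcal Re\,\mu_\la\ge 0$, the operators $(\mu_\la I-P)|_{\mathcal M_{\mathbb C}}$ and $(\mu_\la I-\theta Q)|_{\mathcal M_{\mathbb C}}$ are invertible with inverses uniformly bounded in $\la\in(0,\delta_\epsilon]$. This yields $\|\bm u_\la\|_2+\|\bm v_\la\|_2=O(\la)$. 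Plugging this back into the normalization $\|\bm\varphi_\la\|_2^2+\|\bm\psi_\la\|_2^2=1$ forces $|c_\la|\,\|\bm\xi\|_2+|r_\la|\,\|\bm\eta\|_2\ge 1-O(\la)$, so that $|c_\la|+|r_\la|$ is bounded below by a positive constant for $\la$ sufficiently small.

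Finally, the two scalar identities above give $|\mu_\la|(|c_\la|+|r_\la|)\le C\la$, which combined with the lower bound on $|c_\la|+|r_\la|$ produces $|\mu_\la/\la|\le C'$ for all $\la\in(0,\la_1]$ with $\la_1\in(0,\delta_\epsilon)$ small. The main technical point I expect to have to be careful about is securing the \emph{uniform} invertibility of $(\mu_\la I-P)|_{\mathcal M_{\mathbb C}}$: this requires combining the a priori boundedness of $|\mu_\la|$ with the spectral gap of $P|_{\mathcal M_{\mathbb C}}$, and tracking that the constants do not blow up as $\beta_\la$ varies inside the compact set $\mathcal B$, which is where the continuity of $(\bm x^{(\la,\beta)},\bm y^{(\la,\beta)})$ in $(\la,\beta)$ furnished by Theorem \ref{thglobal} is essential.
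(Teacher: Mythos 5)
Your proposal is correct, and it reaches the conclusion by a genuinely different mechanism than the paper, although both arguments ultimately rest on the same two structural facts: the column sums of $P$ and $Q$ vanish (so summing each equation over $j$ annihilates the coupling term and leaves only the $O(\la)$ reaction part), and the Perron--Frobenius spectral gap (all eigenvalues of $P$ and $Q$ other than the simple eigenvalue $0$ have strictly negative real part). The paper proceeds by contradiction and compactness: it first bounds $\mu_\la$ via the quadratic-form identity obtained by pairing the equations with $\overline{\bm\varphi}_\la,\overline{\bm\psi}_\la$, then supposes $|\mu_{\la_l}/\la_l|\to\infty$ along a sequence, extracts convergent subsequences, shows the limit eigenvector must be $(\kappa_1\bm\xi,\kappa_2\bm\eta)$ with limit eigenvalue $0$ (using \cite[Corollary 4.3.2]{Smith1995Monotone}), and then derives a contradiction from the summed equation. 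You instead argue directly and quantitatively: you split $\bm\varphi_\la=c_\la\bm\xi+\bm u_\la$, $\bm\psi_\la=r_\la\bm\eta+\bm v_\la$, use the uniform invertibility of $(\mu I-P)\vert_{\mathcal M_{\mathbb C}}$ and $(\mu I-\theta Q)\vert_{\mathcal M_{\mathbb C}}$ on the compact set $\{\mathcal Re\,\mu\ge0,\ |\mu|\le M\}$ to get $\|\bm u_\la\|_2+\|\bm v_\la\|_2=O(\la)$, and then read off $|\mu_\la|\le C\la/(|c_\la|+|r_\la|)$ from the projected scalar equations. Your route avoids subsequence extraction, gives an explicit constant, and handles the two components symmetrically (the paper needs a ``without loss of generality $\bm\varphi_*\ne\bm0$'' case split); the price is that you must justify the uniform resolvent bound on the stable subspace, which is routine here since $P$ and $Q$ are fixed matrices and $|\mu_\la|$ is a priori bounded by $\sup_{\la,\beta}\|A_\beta(\la)\|$. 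Your worry about uniformity in $\beta_\la\in\mathcal B$ is only relevant to the bound $M^*$ on the coefficients $M_{lj}^{(\la,\beta)}$ (the paper's estimate \eqref{esti}), not to the resolvent itself. Both proofs are sound.
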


\begin{proof}
Ignoring a scalar
factor, we assume that $\|\bm \varphi_\la\|_2^{2}+\|\bm \psi_\la\|_2^{2}=\|\bm \xi\|_2^2+\|\bm \eta\|_2^2$, where $\bm \xi$ and $\bm \eta$ are defined in \eqref{xi}.
Substituting $(\mu,\beta,\bm \varphi,\bm \psi)=(\mu_\la ,\beta_\la,\bm \varphi_\la,\bm \psi_\la)$ into \eqref{m4}, we have
\begin{equation}\label{eigen1}
\begin{cases}
\ds\mu_\la \varphi _{\la j}= \sum_{k=1}^np_{jk}\varphi_{\la k} +\la \left[ M_{1j}^{(\la,\beta_\la)} \varphi _{\la j} + M_{2j}^{(\la,\beta_\la)}\psi _{\la j} \right],&j=1,\dots,n, \\
\ds\mu_\la \psi _{\la j}= \theta\sum_{k=1}^n q_{jk} \psi_{\la k}  + \la \left[ M_{3j}^{(\la,\beta_\la)}\varphi_{\la j} -M_{2j}^{(\la,\beta_\la)}\psi_{\la j} \right],&j=1,\dots,n,\\
\end{cases}
\end{equation}
where $M_{lj}^{(\la,\beta)}$ are defined in \eqref{Mi} for $l=1,2,3$.
Multiplying the first and second equation of \eqref{eigen1} by $ \overline\varphi _{\la j}$ and $ \overline\psi_{\la j}$, respectively, and
summing these equations over all $j$ yield
\begin{equation}\label{infesti}
\begin{split}
&\mu_\la\left(\|\bm \varphi_\la\|_2^{2}+\|\bm \psi_\la\|_2^{2}\right)\\
=&\sum_{j=1}^n\sum_{k=1}^np_{jk}\overline\varphi_{\la j}\varphi_{\la k}+\sum_{j=1}^n\sum_{k=1}^nq_{jk}\overline\psi_{\la j}\psi_{\la k}\\
&+\la \sum_{j=1}^n\left( M_{1j}^{(\la,\beta_\la)}\varphi _{\la j} + M_{2j}^{(\la,\beta_\la)} \psi _{\la j} \right)\overline\varphi_{\la j}+\la \sum_{j=1}^n\left( M_{3j}^{(\la,\beta_\la)}\varphi_{\la j} -M_{2j}^{(\la,\beta_\la)}\psi_{\la j} \right)\overline\psi_{\la j}.
\end{split}
\end{equation}
Note that there exists a positive constant $M^*$ such that
\begin{equation}\label{esti}
\left|M_{1j}^{(\la,\beta_\la)}\right|,\;\left|M_{2j}^{(\la,\beta_\la)}\right|,\;\left|M_{3j}^{(\la,\beta_\la)}\right|\le M^*\;\;\text{for}\;\;\la\in[0,\delta_\epsilon] \;\;\text{and}\;\; j=1,\dots,n.
\end{equation}
This, combined with \eqref{infesti}, implies that $\mu_\la$ is bounded for $\la\in[0,\delta_\epsilon]$.

Now we claim that there exists $\la_1\in(0,\delta_\epsilon)$ such that $\left|\mu_\la/\la\right|$ is bounded for $\la \in (0, {\la}_1]$, where $\delta_\epsilon$ is defined in Theorem \ref{thglobal}.
Note that $\|\bm \varphi_\la\|_2^{2}+\|\bm \psi_\la\|_2^{2}=\|\bm \xi\|_2^2+\|\bm \eta\|_2^2$, $\mathcal Re \mu_\la\ge0$, and $\mu_\la$ is bounded. If the claim is not true, then there exists a sequence $\{\la_l\}_{l=1}^\infty$ such that
$\lim_{l\to\infty}\la_l=0$, $\lim_{l\to\infty}|\mu_{\la_l}/\la_l|=\infty$, $\lim_{l\to\infty}\mu_{\la_l}=\gamma$ with $\mathcal{R}e \gamma \ge 0$, and $\lim_{l\to\infty}\bm{\varphi}_{\la_l}=\bm\varphi_*$ and $\lim_{l\to\infty}\bm{\psi}_{\la_l}=\bm\psi_*$ with $\|\bm{\varphi}_*\|_2^2+\|\bm{\psi}_*\|_2^2=\|\bm \xi\|_2^2+\|\bm \eta\|_2^2$.
Substituting $\la=\la_l$ into \eqref{eigen1} and
taking the limits of the two equations as $l\to\infty$, we have
\begin{equation}\label{limph}
P\bm{\varphi}_*-\gamma\bm{\varphi}_*=\bm 0 \;\;\text{and}\;\;Q\bm{\psi}_*-\gamma\bm{\psi}_*=\bm 0,
\end{equation}
where $P$ and $Q$ are defined in assumption $(\bf A_1)$.
 Without loss of generality,
we assume that $\bm \varphi_*\ne\bm 0$.
Then it follows from \eqref{limph} that $\gamma$ is an eigenvalue of $P$, which implies that $\gamma=s(P)$ or $\mathcal Re \gamma<s(P)$ from \cite[Corollary 4.3.2]{Smith1995Monotone}.
Since $s(P)=0$ and $\mathcal Re \gamma\ge0$, we have $\gamma=0$. Consequently, $\bm \varphi_*=\kappa_1\bm\xi$ and
 $\bm \psi_*=\kappa_2 \bm \eta$, where $\kappa_1,\kappa_2\in\mathbb C$, and $\kappa_1\ne0$.
Summing the first equation of \eqref{eigen1} over all $j$, we have
\begin{equation*}%\label{sum-mu}
  \mu _\la \sum_{j=1}^n\varphi _{\la j}  =\sum_{j=1}^n\la \left[ M_{1j}^{(\la,\beta_\la)}\varphi_{\la j} +M_{2j}^{(\la,\beta_\la)}\psi_{\la j} \right].
\end{equation*}
This, combined with \eqref{esti}, implies that
\begin{equation*}
\limsup_{l\to\infty}\left| \frac{\mu _{\la_l} }{\la_l }\right| \le M^*\ds\f{|\kappa_1|+|\kappa_2|}{|\kappa_1|},
\end{equation*}
which contradicts $\lim_{l\to\infty}|\mu_{\la_l}/\la_l|=\infty$.
Therefore, the claim is true.
\end{proof}

To analyze the stability of
$(\bm x^{(\la,\beta)},\bm y^{(\la,\beta)})$, we need to consider that whether the eigenvalues of \eqref{m4} could pass through the imaginary axis.
It follows from Lemma \ref{l3} that if $\mu={\rm i}\la \nu$ is an eigenvalue of \eqref{m4}, then $\nu$ is bounded for $\la\in(0,\la_1]$, where $\la_1$ is defined in Lemma \ref{l3}.
Substituting $\mu={\rm i}\la \nu\;(\nu\ge0)$ into \eqref{m4}, we have
\begin{equation}\label{v1}
\begin{cases}
\ds{\rm i}\la \nu \varphi _j= \sum_{k=1}^np_{jk}\varphi _k+\la \left[ M_{1j}^{(\la,\beta)} \varphi _j + M_{2j}^{(\la,\beta)} \psi _j \right], &j=1,\dots,n,\\
\ds{\rm i}\la \nu \psi _j = \theta\sum_{k=1}^n q_{jk} \psi _k  + \la \left[ M_{3j}^{(\la,\beta)} \varphi _j -M_{2j}^{(\la,\beta)} \psi _j \right],&j=1,\dots,n. \\
\end{cases}
\end{equation}
Ignoring a scalar factor,
$(\bm \varphi,\bm \psi)^T\in \mathbb{C}^{2n}$ in \eqref{v1} can be represented as
\begin{equation}\label{vps}
\begin{cases}
\bm \varphi ={\left( \varphi_{1}, \cdots,\varphi_{n} \right)}^T=\delta \bm \xi+\bm w, \;\;\text{where}\;\delta\ge0\;\;\text{and}\;\;\bm w\in \mathcal M_{\mathbb C},\\
\bm \psi  ={\left( \psi_{1}, \cdots, \psi_{n} \right)}^T=(s_{1}+{\rm i}s_{ 2})\bm \eta+\bm z,\;\;\text{where}\;s_1,s_2\in\mathbb R\;\;\text{and}\;\;\bm z \in \mathcal M_{\mathbb C},\\
\|\bm \varphi\|_2^{2}+\|\bm \psi\|_2^{2}=\|\bm \xi\|_2^2+\|\bm \eta\|_2^2.
%&\|\bm \xi\|_2^2+\|\bm \eta\|_2^2=\|\bm \varphi\|_2^{2}+\|\bm \psi\|_2^{2}=s_1^2\|\bm \xi\|_2^2+r \sum_{j=1}^n\xi_j(w_j+\overline w_j)+\|\bm w\|_2^2\\
%&+\left(s^2_2+s^2_3\right)\|\bm \eta\|_2^2+\sum_{j=1}^n\eta_j\left[s_1(z_j+\overline z_j)+{\rm i}s_2(\overline z_j- z_j)\right]+\|\bm z\|_2^2.
\end{cases}
\end{equation}
Then we see that $(\bm \varphi,\bm \psi,\nu,\beta)$ is a solution of \eqref{v1}, where $\nu\ge0$, $\beta\in\mathcal B$, and $(\bm \varphi,\bm \psi)$ is defined in \eqref{vps}, if and only if
\begin{equation}\label{equi6}
\begin{cases}
 \bm{ H}(\delta,s_1,s_2,\bm w,\bm z,\nu,\beta,\la)=\bm 0\\
\delta\ge0,\; s_1 ,s_2\in\mathbb R,\;\beta\in\mathcal B,\;\nu\ge0,\;\bm w,\bm z\in\mathcal M_{\mathbb C}
 \end{cases}
\end{equation}
is solvable for some value of $(\delta,s_1,s_2,\bm w,\bm z,\nu,\beta)$. Here $\mathcal B$ is defined in Lemma \ref{thglobal}, and
$\bm{ H}(\delta,s_1,s_2,\bm w,\bm z,\nu,\beta,\la):\mathbb R^3\times \left(\mathcal M_{\mathbb C}\right)^2\times \mathbb R\times\mathcal B\times [0,\la_1]\to \left(\mathbb C\times \mathcal M_{\mathbb C}\right)^2\times \mathbb R$ is defined by
\begin{equation*}
 \bm{ H}(\delta,s_1,s_2,\bm w,\bm z,\nu,\beta,\la)=( h_1, h_{21},\dots  h_{2n}, h_3, h_{41},\dots, h_{4n},h_5)^T,
\end{equation*}
where
\begin{equation*}%\label{equi7}
\begin{split}
 h_1(\delta,s_1,s_2,\bm w,\bm z,\nu,\beta,\la):=& \sum_{j=1}^n  \left[ M_{1j}^{(\la,\beta)}(\delta\xi_j + w_j)
+ M_{2j}^{(\la,\beta)} [ (s_1 + {\rm i} s_2) \eta_j + z_j ]\right]- {\rm i} \nu\delta, \\
 h_{2j}(\delta,s_1,s_2,\bm w,\bm z,h,\beta,\la):=&\sum_{k=1}^n p_{jk}w_k+\la \left[ M_{1j}^{(\la,\beta)}(\delta\xi_j + w_j)
 + M_{2j}^{(\la,\beta)} [ (s_1 + {\rm i} s_2) \eta_j + z_j ]  \right]\\
&- {\rm i} \la\nu(\delta\xi_j + w_j)-\frac{\la }{n} h_1 ,\;\;\;\;j=1,\dots,n,\\
 h_{3}(\delta,s_1,s_2,\bm w,\bm z,\nu,\beta,\la):=& \sum_{j=1}^n  \left[ M_{3j}^{(\la,\beta)}(\delta\xi_j + w_j)
- M_{2j}^{(\la,\beta)} [ (s_1 + {\rm i} s_2) \eta_j + z_j] \right]\\
& -{\rm i}  \nu (s_1 + {\rm i} s_2) , \\
 h_{4j}(\delta,s_1,s_2,\bm w,\bm z,\nu,\beta,\la):=&\theta\sum_{k=1}^n q_{jk}z_k+\la\left[  M_{3j}^{(\la,\beta)}(\delta\xi_j + w_j)
 - M_{2j}^{(\la,\beta)} [(s_1 + {\rm i} s_2) \eta_j + z_j]\right]\\
&-{ \rm i}\la\nu  [ (s_1 + {\rm i} s_2) \eta_j + z_j]-\frac{\la }{n}h_3 ,\;\;\;\;j=1,\dots,n,\\
 h_5(\delta,s_1,s_2,\bm w,\bm z,\nu,\beta,\la):=&\left(\delta^2-1\right)\|\bm \xi\|_2^2+\delta \sum_{j=1}^n\xi_j(w_j+\overline w_j)+\left(s_1^2+s_2^2-1\right)\|\bm \eta\|_2^2\\
&+\|\bm w\|_2^2+\sum_{j=1}^n\eta_j\left[s_1(z_j+\overline z_j)+{\rm i}s_2(\overline z_j- z_j)\right]+\|\bm z \|_2^2,
\end{split}
\end{equation*}
and $M_{1j}^{(\la,\beta)},M_{2j}^{(\la,\beta)},M_{3j}^{(\la,\beta)}$ are defined in \eqref{Mi}.

We first solve \eqref{equi6} for $\la=0$.
\begin{lemma}\label{l5}
Assume that $\la=0$. Then
\eqref{equi6}
 admits a unique solution $$(\delta,s_1,s_2,\bm w,\bm z,\nu,\beta)=(\delta_0,s_{10},s_{20},\bm w_0,\bm z_0,\nu_0,\beta_0),$$ where
\begin{equation*}
\begin{split}
&\nu_0 =\left( \sum_{j=1}^n {a_j} \right)\sqrt {\sum_{j=1}^n  \xi_j^2\eta_j },\;\beta_0 =  \frac{\nu_0^2 + 1}{\sum_{j=1}^n b_j \xi_j },\;\bm w_0=\bm 0,\;\bm z_0=\bm 0,\\
&\delta_0=  \sqrt{\frac{ \|\bm \xi\|_2^2+\|\bm \eta\|_2^2}{\|\bm \xi\|_2^2+\left(1+\frac{1}{\nu^2_0}\right)\|\bm \eta\|_2^2}},\;s_{10}=-\delta_0,\;s_{20}=\frac{\delta_0}{\nu_0}.\\
\end{split}
\end{equation*}
\end{lemma}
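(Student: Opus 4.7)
The plan is to exploit the fact that setting $\la=0$ decouples the eigenvector components lying in $\mathcal{M}_{\mathbb{C}}$ from the rest, and then reduce the remaining equations to an explicit finite-dimensional algebraic system.

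First, I would analyze $h_{2j}=0$ and $h_{4j}=0$ with $\la=0$. Every term carrying a factor of $\la$ drops out, leaving simply $P\bm w=\bm 0$ and $\theta Q\bm z=\bm 0$. By assumption $(\bf A_1)$ and Perron--Frobenius, $\mathcal{N}(P)=\mathrm{span}\{\bm\xi\}$ and $\mathcal{N}(Q)=\mathrm{span}\{\bm\eta\}$; but $\bm\xi,\bm\eta\notin \mathcal{M}_{\mathbb{C}}$ because $\sum_j\xi_j=\sum_j\eta_j=1\neq 0$. Hence the only solutions in $\mathcal{M}_{\mathbb{C}}$ are $\bm w_0=\bm z_0=\bm 0$.

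Next, I would substitute $\bm w=\bm z=\bm 0$ together with $x_j^{(0,\beta)}=c_0\xi_j$, $y_j^{(0,\beta)}=r_{0\beta}\eta_j$ (from Lemma \ref{l1}) into the definitions of $M_{1j}^{(0,\beta)},M_{2j}^{(0,\beta)},M_{3j}^{(0,\beta)}$. Writing $\alpha:=\beta\sum_j b_j\xi_j$ and using the identity $c_0 r_{0\beta}\sum_j\xi_j^2\eta_j=\alpha$ from the explicit formula for $r_{0\beta}$, the coefficients collapse to
\[
\sum_j M_{1j}^{(0,\beta)}\xi_j=\alpha-1,\qquad \sum_j M_{2j}^{(0,\beta)}\eta_j=c_0^2\sum_j\xi_j^2\eta_j,\qquad \sum_j M_{3j}^{(0,\beta)}\xi_j=-\alpha.
\]
Equations $h_1=0$ and $h_3=0$ then become two complex linear equations in $\delta,s_1,s_2,\nu,\beta$. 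Splitting into real and imaginary parts yields four scalar equations: the real/imaginary parts of $h_1$ express $s_1,s_2$ linearly in $\delta$ and $\nu$; substituting these into the real part of $h_3$ gives $\nu^2=c_0^2\sum_j\xi_j^2\eta_j$ (i.e.\ $\nu=\nu_0$), and the imaginary part of $h_3$ forces $\alpha=\nu_0^2+1$, giving $\beta=\beta_0$. This simultaneously produces $s_{10}=-\delta$ and $s_{20}=\delta/\nu_0$.

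Finally, I would plug these relations into the normalization constraint $h_5=0$. With $\bm w=\bm z=\bm 0$ it reduces to $\delta^{2}\|\bm\xi\|_2^2+(s_1^2+s_2^2)\|\bm\eta\|_2^2=\|\bm\xi\|_2^2+\|\bm\eta\|_2^2$, and substituting $s_1=-\delta$, $s_2=\delta/\nu_0$ isolates $\delta^2$, yielding precisely the stated $\delta_0$; the sign is fixed by $\delta\ge 0$. Uniqueness follows at each stage because the reductions were equivalences rather than case splits. I do not expect any serious obstacle: the only subtlety is checking that the kernel of $P$ (and $Q$) misses $\mathcal{M}_{\mathbb{C}}$, and that the $2\times 2$ system arising from $h_1,h_3$ after elimination has the unique solution $(\nu_0,\beta_0)$ in the required range $\nu\ge 0$, $\beta\in\mathcal B$ (which is ensured by taking $\epsilon$ small enough in the definition of $\mathcal B$).
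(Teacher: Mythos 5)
Your proposal is correct and follows essentially the same route as the paper's proof: set $\la=0$ in $h_{2j},h_{4j}$ to force $\bm w=\bm z=\bm 0$ via the Perron--Frobenius structure of $P$ and $Q$, reduce $h_1=h_3=0$ to a $2\times2$ complex linear system whose solvability pins down $\nu_0$, $\beta_0$, $s_{10}=-\delta$, $s_{20}=\delta/\nu_0$, and then use the normalization $h_5=0$ to determine $\delta_0$. The intermediate identities you derive (e.g.\ $\sum_j M_{1j}^{(0,\beta)}\xi_j=\alpha-1$, $\sum_j M_{3j}^{(0,\beta)}\xi_j=-\alpha$) match the paper's \eqref{summs} exactly.
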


\begin{proof}
Substituting $\la =0$ into $ h_{2j}= h_{4j}=0$ for $j=1,\dots,n$, we have $\bm w=\bm w_0=\bm 0$ and $\bm z=\bm z_0=\bm 0$. Then
plugging $\bm w=\bm z=\bm0$ and $\lambda=0$ into $ h_1= h_3=0$, we have

\begin{equation}\label{c1}
\left( {\begin{array}{*{20}{c}}
 \sum_{j=1}^n  M_{1j}^{(0,\beta)}\xi_j -{\rm i}\nu & \sum_{j=1}^n  M_{2j}^{(0,\beta)}  \eta_j \\
 \sum_{j=1}^n  M_{3j}^{(0,\beta)}\xi_j &-\sum_{j=1}^n  M_{2j}^{(0,\beta)}  \eta_j-{\rm i}\nu
\end{array}} \right)\left( {\begin{array}{*{20}{c}}
\delta\\
s_1 + {\rm i} s_2
\end{array}} \right)= \left( {\begin{array}{*{20}{c}}
0\\
0
\end{array}} \right),
\end{equation}
where
\begin{equation}\label{sum-m123}
M_{1j}^{(0,\beta)}=2c_0r_{0\beta}\xi_j\eta_j-\beta b_j-1,\;\;
M_{2j}^{(0,\beta)}=c_0^2\xi_j^2,\;\;M_{3j}^{(0,\beta)}=\beta b_j-2c_0r_{0\beta}\xi_j\eta_j,
\end{equation}
and $c_0$ and $r_{0\beta}$ are defined in \eqref{c0rbeta}. Then we see from \eqref{c0rbeta} that
\begin{equation}\label{summs}
\begin{split}
&\sum_{j=1}^n  M_{1j}^{(0,\beta)}\xi_j=\beta\sum_{j=1}^n b_j\xi_j-1,\;\;\sum_{j=1}^n  M_{2j}^{(0,\beta)} \eta_j=\left(\sum_{j=1}^n a_j\right)^2\sum_{j=1}^n\xi_j^2\eta_j,\\
&\sum_{j=1}^n M_{3j}^{(0,\beta)}\xi_j=-\beta\sum_{j=1}^n b_j\xi_j.
\end{split}
\end{equation}
Then it follows from \eqref {c1} that
\begin{equation}\label{bns12}
\begin{split}
 &\beta=\beta_0 =  \frac{{\left( {\sum_{j=1}^n a_j } \right)}^2 \sum_{j=1}^n \xi_j^2\eta_j  + 1}{\sum_{j=1}^n b_j \xi_j },\;\nu =\nu_0= \left( \sum_{j=1}^n a_j  \right) \sqrt { \sum_{j=1}^n \xi_j ^2 \eta_j },\\
&s_1 =s_{10}= -\delta,\; s_2 =s_{20}= \frac{\delta}{\nu}.
\end{split}
\end{equation}
Again, plugging $\bm w=\bm z=\bm0$ and $\lambda=0$ into $ h_5=0$, we have
\begin{equation*}
\left(\delta^2-1\right)\|\bm \xi\|_2^2+\left(s_{ 1}^2+s_{ 2}^2-1\right)\|\bm \eta\|_2^2=0.
\end{equation*}
This, combined with \eqref{bns12}, implies that
\begin{equation*}
\delta=\delta_0=  \sqrt{\frac{ \|\bm \xi\|_2^2+\|\bm \eta\|_2^2}{\|\bm \xi\|_2^2+\left(1+\frac{1}{\nu^2_0}\right)\|\bm \eta\|_2^2}}.
\end{equation*}
\end{proof}
Now, we solve \eqref{equi6} for $\la>0$.
\begin{theorem}\label{the1}
There exists $\la_2\in(0,\la_1)$, where $\la_1$ is obtained in Lemma \ref{l3}, and a continuously differentiable mapping
$$\la\mapsto(\delta_\la,s_{1\la},s_{2\la},\bm w_{\la},\bm z_\la,\nu_{\la},\beta_\la):
[0,\la_2]\to(0,\infty)\times\mathbb R^2\times \left(\mathcal M_{\mathbb C}\right)^2\times (0,+\infty )\times \mathcal B$$ such that
\eqref{equi6}
 has a unique solution  $(\delta_\la,s_{1\la},s_{2\la},\bm w_{\la},\bm z_\la,\nu_{\la},\beta_\la)$ for $\la\in[0,\la_2]$.
Moreover, for $\la=0$,
\begin{equation*}%\label{limcruv2}
\left(\delta_\la,s_{1\la},s_{2\la},\bm w_{\la},\bm z_\la,\nu_{\la},\beta_\la \right)=(\delta_0,s_{10},s_{20},\bm w_0,\bm z_0,\nu_0,\beta_0),
\end{equation*}
where $(\delta_0,s_{10},s_{20},\bm w_0,\bm z_0,\nu_0,\beta_0)$ is defined in Lemma \ref{l5}.
\end{theorem}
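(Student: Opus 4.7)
The plan is to deduce Theorem \ref{the1} from the implicit function theorem applied to the map $\bm H$ at the base point supplied by Lemma \ref{l5}, with $\la$ playing the role of the parameter. By that lemma we already have $\bm H(\delta_0, s_{10}, s_{20}, \bm 0, \bm 0, \nu_0, \beta_0, 0) = \bm 0$, so the reference solution is in hand. Once the Fr\'echet derivative of $\bm H$ with respect to $(\delta, s_1, s_2, \bm w, \bm z, \nu, \beta)$ is shown to be bijective at this point, the implicit function theorem will deliver a continuously differentiable curve on some interval $[0, \la_2]$, and shrinking $\la_2$ if necessary will keep $\delta_\la > 0$, $\nu_\la > 0$ and $\beta_\la \in \mathcal B$ by continuity, matching the admissibility constraints in \eqref{equi6}.

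The first step is to set up and analyze this linearization, which I will denote $\bm L$. In the $h_{2j}$ and $h_{4j}$ components every $\la$-dependent term vanishes at $\la = 0$, so the corresponding blocks of $\bm L$ act simply as $\tilde{\bm w} \mapsto P\tilde{\bm w}$ and $\tilde{\bm z} \mapsto \theta Q\tilde{\bm z}$. The coupling matrices have column sums equal to zero (so $\mathcal M$ is $P$- and $Q$-invariant), and each has $0$ as a simple eigenvalue whose eigenvector ($\bm \xi$ or $\bm \eta$) lies outside $\mathcal M$; hence the restrictions $P|_{\mathcal M_{\mathbb C}}$ and $\theta Q|_{\mathcal M_{\mathbb C}}$ are invertible. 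Therefore $\bm L(\tilde\delta,\tilde s_1,\tilde s_2,\tilde{\bm w},\tilde{\bm z},\tilde\nu,\tilde\beta) = \bm 0$ forces $\tilde{\bm w} = \bm 0$ and $\tilde{\bm z} = \bm 0$, and the question collapses to whether the linearizations of $h_1$, $h_3$ and $h_5$ in the five real unknowns $(\tilde\delta, \tilde s_1, \tilde s_2, \tilde\nu, \tilde\beta)$ form an invertible $5\times 5$ real system.

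This determinant check is the main obstacle. Using \eqref{summs} together with the identities $\beta_0\sum_j b_j\xi_j = \nu_0^2+1$ and $c_0^2\sum_j \xi_j^2\eta_j = \nu_0^2$ extracted from Lemma \ref{l5}, every entry of the reduced matrix admits a short closed form. After splitting the complex equations $h_1 = 0$ and $h_3 = 0$ into real and imaginary parts and adjoining the real normalization $h_5 = 0$, one expands the resulting determinant and verifies that it is nonzero, the nonvanishing being driven by $\nu_0 > 0$ and $\delta_0 > 0$. Granting this, $\bm L$ is bijective on the relevant tangent spaces and the implicit function theorem produces the smooth curve $(\delta_\la, s_{1\la}, s_{2\la}, \bm w_\la, \bm z_\la, \nu_\la, \beta_\la)$ together with local uniqueness near the base point.

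To promote this local uniqueness to the uniqueness asserted in the theorem, I would mimic the compactness argument used in the uniqueness portion of Theorem \ref{thlocal} and in Lemma \ref{l3}. Given any admissible solution of \eqref{equi6}, the normalization $h_5 = 0$ is equivalent to $\|\bm\varphi_\la\|_2^2 + \|\bm\psi_\la\|_2^2 = \|\bm\xi\|_2^2 + \|\bm\eta\|_2^2$ and therefore bounds the tuple $(\tilde\delta_\la, \tilde s_{1\la}, \tilde s_{2\la}, \tilde{\bm w}_\la, \tilde{\bm z}_\la)$; Lemma \ref{l3} bounds $\tilde\nu_\la$; and $\tilde\beta_\la \in \mathcal B$ lies in a compact set. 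Passing to a subsequence as $\la_l \to 0^+$ and taking limits in $h_{2j} = 0$ and $h_{4j} = 0$ yields $P\bm w_* = \bm 0$ and $\theta Q\bm z_* = \bm 0$, and the invertibility of $P$, $Q$ on $\mathcal M_{\mathbb C}$ forces $\bm w_* = \bm 0$ and $\bm z_* = \bm 0$; the remaining limiting equations coincide with those solved in Lemma \ref{l5}, so the only possible limit point is $(\delta_0, s_{10}, s_{20}, \bm 0, \bm 0, \nu_0, \beta_0)$. Thus every admissible solution falls into the implicit function theorem neighborhood once $\la$ is sufficiently small, giving uniqueness on the required interval $[0, \la_2]$ and completing the proof.
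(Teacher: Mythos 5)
Your proposal is correct and follows essentially the same route as the paper: the implicit function theorem at the base point of Lemma \ref{l5}, the observation that the $h_{2j}$, $h_{4j}$ blocks linearize to $P$ and $\theta Q$ restricted to $\mathcal M_{\mathbb C}$ (invertible there, forcing $\tilde{\bm w}=\tilde{\bm z}=\bm 0$), reduction to a $5\times 5$ real system in $(\tilde\delta,\tilde s_1,\tilde s_2,\tilde\nu,\tilde\beta)$, and a compactness/limit argument for global uniqueness identical in structure to the one in Theorem \ref{thlocal}. The only piece you defer is the explicit determinant evaluation, which the paper carries out and finds to be $2\delta_0^2\bigl(\sum_{j=1}^n b_j\xi_j\bigr)\bigl(\|\bm\eta\|_2^2+\nu_0^2\|\bm\xi\|_2^2+\nu_0^2\|\bm\eta\|_2^2\bigr)\ne 0$, consistent with your claim.
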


\begin{proof}
We first show the existence.
It follows from Lemma \ref{l5} that
$\bm H\left(K_0\right)=\bm 0$,
where $K_0=(\delta_0,s_{10},s_{20},\bm w_0,\bm z_0,\nu_0,\beta_0,0)$.
A direct computation implies that
the Fr\'echet derivative of $\bm{ H}(\delta,s_1,s_2,\bm w,\bm z,\nu,\beta,\la)$ with respect to $(\delta,s_1,s_2,\bm w,\bm z, \nu,\beta)$ at $K_0$  is as follows:
\begin{equation*}%\label{equi3}
\begin{split}
&\bm K(\hat \delta, \hat s_1,\hat s_2,\bm{ \hat w},\bm{\hat z},\hat \nu,\hat \beta):\mathbb R^3\times\left(\mathcal M_{\mathbb C}\right)^2\times \mathbb R^2\to \left(\mathbb C\times\mathcal M_{\mathbb C}\right)^2\times \mathbb R,\\
&\bm K(\hat \delta, \hat s_1,\hat s_2,\bm{ \hat w},\bm{\hat z},\hat \nu,\hat \beta)=(k_1,k_{21},\dots,k_{2n},k_3,k_{41},\dots,k_{4n},k_5)^T(\hat \delta, \hat s_1,\hat s_2,\bm{ \hat w},\bm{\hat z},\hat \nu,\hat \beta),
\end{split}
\end{equation*}
where
\begin{equation*}
\begin{split}
k_1:=&\sum_{j=1}^n \left[ \left( 2 c_0 r_{0 \beta _0} \xi _j \eta _j - \beta _0 b_j - 1 \right)\left(\hat \delta\xi _j +\hat w_j\right) + {(c_0\xi _j)}^2 ((\hat s_1+{\rm i}\hat s_2 ) \eta _j + \hat z_j) \right]\\
& -{\rm i}\hat \nu \delta_0- {\rm i}\nu_0\hat \delta+\hat \beta \delta_0 \sum_{j=1}^nb_j\xi_j ,\\
k_{2j}:=&\sum_{k= 1}^n {p_{jk}\hat w_k},~~~~~j=1,\dots,n,\\
k_3:=&\sum_{j=1}^n  \left[ (\beta _0  b_j - 2 c_0 r_{0 \beta _0}  \xi _j\eta _j) \left(\hat \delta\xi _j +\hat w_j\right)-  {(c_0\xi _j)}^2 ((\hat s_1+{\rm i}\hat s_2 ) \eta _j+ \hat z_j) \right]\\
&- {\rm i}\nu_0(\hat s_1+ {\rm i}\hat s_2) +\hat \nu\delta_0\left({\rm i}+\frac{1}{\nu_0}\right)-\hat \beta  \delta_0 \sum_{j=1}^nb_j\xi_j , \\
\end{split}
\end{equation*}
\begin{equation*}
\begin{split}
k_{4j}:=&\theta \sum_{k = 1}^n {q_{jk}\hat z_k},~~~~~j=1,\dots,n,\\
k_5:=& 2\delta_0\|\bm \xi\|_2^2 \hat \delta - 2\delta_0\|\bm \eta\|_2^2\hat s_1 + \frac{2\delta_0}{\nu_0}\|\bm \eta\|_2^2\hat s_2+ \delta_0\sum_{j=1}^n\xi_j( \hat w_ j+\overline {\hat w_ j})\\
&+\delta_0\sum_{j=1}^n\eta_j\left[\frac{\rm i}{\nu_0}(\overline{\hat z_ j}- \hat z_ j)- (\hat z_ j+\overline {\hat z_j})\right].
\end{split}
\end{equation*}
If $\bm K( \hat \delta_\la,\hat s_1,\hat s_2,\bm{ \hat w},\bm{\hat z},\hat \nu,\hat \beta)=\bm 0$,  we see that $\bm {\hat w}=\bm 0$ and $\bm {\hat z}=\bm 0$. Plugging $\bm {\hat w}=\bm {\hat z}=\bm 0$ into $k_1=k_3=k_5=0$, we get
\begin{equation*}%\label{K1}
D(\hat \delta,\hat s_1,\hat s_2,\hat \nu,\hat \beta)^T=\bm 0,
\end{equation*}
where
\begin{equation*}
D=
\left(\begin{array}{*{20}{c}}
d_{11} &\sum_{j=1}^n  {(c_0\xi _j)}^2 \eta _j &0&0&\delta_0\sum_{j=1}^nb_j\xi_j  \\
-\nu_0&0 &\sum_{j=1}^n  {(c_0\xi _j)}^2 \eta _j &-\delta_0&0\\
d_{31} &-\sum_{j=1}^n  {(c_0\xi _j)}^2 \eta _j &\nu _0&\frac{\delta_0}{\nu _0} &-\delta_0\sum_{j=1}^nb_j\xi_j \\
0&-\nu _0&-\sum_{j=1}^n  {(c_0\xi _j)}^2 \eta _j &\delta_0&0\\
 \|\bm \xi\|_2^2& -\|\bm \eta\|_2^2&\frac{ 1}{ \nu_0}\|\bm \eta\|_2^2&0&0
\end{array} \right), \\
\end{equation*}
and
\begin{equation*}
d_{11}=\sum_{j=1}^n  \left( 2 c_0 r_{0 \beta _0} \xi _j \eta _j - \beta _0 b_j - 1 \right)\xi _j,\;\;
d_{31}=\sum_{j=1}^n  (\beta _0  b_j - 2 c_0 r_{0 \beta _0}  \xi _j\eta _j)\xi _j.
\end{equation*}
A direct computation implies that
\begin{equation*}
|D|=2\delta^2_0\left(\sum_{j=1}^nb_j\xi_j\right)\left(\|\bm\eta\|_2^2+\nu_0^2\|\bm \xi\|_2^2+\nu_0^2\|\bm\eta\|_2^2\right)\ne0,
\end{equation*}
which implies that $\hat \delta=0,\hat  s_1=0,\hat  s_2=0,\hat \nu=0$ and $\hat \beta=0$. Therefore, $\bm K$ is bijection.
It follows from the implicit function theorem that there exists $\la_2\in(0,\la_1)$, where $\la_2$ is sufficiently small,
and a continuously differentiable mapping
$$\la\mapsto(\delta_\la,s_{1\la},s_{2\la},\bm w_{\la},\bm z_\la,\nu_{\la},\beta_\la):
[0,\la_2]\to(0,\infty)\times\mathbb R^2\times \left(\mathcal M_{\mathbb C}\right)^2\times (0,+\infty )\times \mathcal B$$
such that $\bm H\left(\delta_\la,s_{1\la},s_{2\la},\bm w_\la,\bm z_\la,\nu_\la,\beta_\la,\la\right)=\bm 0$, and  for $\la=0$,
\begin{equation*}
\left(\delta_\la,s_{1\la},s_{2\la},\bm w_{\la},\bm z_\la,\nu_{\la},\beta_\la \right)=(\delta_0,s_{10},s_{20},\bm w_0,\bm z_0,\nu_0,\beta_0).
\end{equation*}
Then $\left(\delta_\la,s_{1\la},s_{2\la},\bm w_{\la},\bm z_\la,\nu_{\la},\beta_\la \right)$ is a positive solution of \eqref{equi6} for $\la \in [0,\la_2 ]$.

Now, we show the uniqueness. From the implicit function theorem, we only need to verify that if $\left(\delta^{\la},s_{1}^{\la},s_{2}^{\la},\bm {w}^{\la},\bm{z}^{\la},\nu^{\la},\beta^{\la}\right)$ is a solution of \eqref{equi6},
then
\begin{equation}\label{limv}
\left(\delta^{\la},s_{1}^{\la},s_{2}^{\la},\bm {w}^{\la},\bm{z}^{\la},\nu^{\la},\beta^{\la}\right)\to (\delta_0,s_{10},s_{20},\bm w_0,\bm z_0,\nu_0,\beta_0) \;\;\text{as}\;\;
\la \to 0.
\end{equation}
Since $$\| \delta^{\la}\bm \xi+\bm w^{\la} \|_2^{2}+\|(s^{\la}_1+{\rm i} s^{\la}_2)\bm\eta+\bm z^{\la}\|_2^{2}=\|\bm \xi\|_2^2+\|\bm \eta\|_2^2,$$
we obtain that $\bm {w}^{\la}$, $\bm { z}^{\la},\delta^\la $, $s^{\la}_1$ and $s_2^{\la}$ are bounded. It follows from Lemma \ref{l3} that  $\nu^\la$  is bounded. Then, up to a sequence, we can assume that
\begin{equation*}
\begin{split}
&\lim_{\la\to0} \bm  w^{\la}=\bm w^*,\;\lim_{\la\to0}  \bm {z}^{\la}=\bm z^*,\;\lim_{\la\to0}   s^{\la}_1= s_1^*,\;\lim_{\la\to0}   s^{\la}_2= s_2^*,\\
&\lim_{\la\to0}  \delta^{\la}= \delta^*,\;\lim_{\la\to0}  \beta^{\la}= \beta^*,\;\lim_{\la\to0}  \nu^{\la}= \nu^*.
\end{split}
 \end{equation*}
Taking the limit of \eqref{equi6} as $\la\to0$, we see
that
 $(\delta^*,s^*_1,s^*_2,\bm w^*,\bm z^*,\nu^*,\beta^*)$ is a solution of \eqref{equi6} for $\la=0$. This, combined with
Lemma \ref{l5}, implies that \eqref{limv} holds. This completes the proof.
\end{proof}

Then from Theorem \ref{the1}, we obtain the following result.

\begin{theorem}\label{the1*}
Assume that $\la\in(0,\la_2]$, where $\la_2$ defined in Theorem \ref{the1}. Then $(\bm\varphi,\bm\psi,\nu,\beta)$ solves
\begin{equation*}
\begin{cases}
\left(A_\beta(\la)- {\rm i}\la\nu I\right)(\bm \varphi,\bm \psi)^T=\bm 0,\\
\nu\geq0,\;\beta\in\mathcal B,\;(\bm\varphi,\bm \psi)^T(\ne\bm 0)\in \mathbb C^{2n},\\
\end{cases}
\end{equation*}
 if and only if
$$\nu=\nu_\la,\;\beta=\beta_\la,\;\bm \varphi=\kappa\bm \varphi_\la =\kappa(\delta_\la \bm \xi+\bm w_\la),\;\bm\psi=\kappa\bm\psi_\la=\kappa((s_{1\la}+{\rm i}s_{2\la}) \bm \eta+\bm z_\la),$$
where $A_\beta(\la)$ is defined in \eqref{ph1}, $I$ is the identity matrix, $\kappa$ is a nonzero constant, and $(\nu_{\la},\beta_\la,\delta_\la,s_{1\la},s_{2\la},\bm w_{\la},\bm z_\la)$ is defined in Theorem \ref{the1}.
\end{theorem}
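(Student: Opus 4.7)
The plan is to recognize that the system \eqref{equi6}, whose unique solution $(\delta_\la, s_{1\la}, s_{2\la}, \bm w_\la, \bm z_\la, \nu_\la, \beta_\la)$ is produced by Theorem \ref{the1}, is nothing other than the eigenvalue problem \eqref{v1} written under the specific normalization \eqref{vps}. Once this equivalence is made clean, the uniqueness assertion of Theorem \ref{the1} transfers directly to the eigenvalue problem, modulo the one-dimensional freedom of scalar rescaling.

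For the sufficiency direction, I would start from the fact that $\bm H(\delta_\la, s_{1\la}, s_{2\la}, \bm w_\la, \bm z_\la, \nu_\la, \beta_\la, \la) = \bm 0$ by Theorem \ref{the1}. Since $h_1 = h_3 = 0$, the corrective terms $-\la h_1/n$ and $-\la h_3/n$ inside $h_{2j}$ and $h_{4j}$ disappear; using $P\bm\xi = \bm 0$ and $Q\bm\eta = \bm 0$ to fold $\sum_k p_{jk}w_{\la k}$ into $\sum_k p_{jk}\varphi_{\la k}$ (and similarly for the $Q$-sum), the identities $h_{2j} = h_{4j} = 0$ become exactly the componentwise equations of \eqref{v1}. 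Hence $\bm\varphi_\la := \delta_\la\bm\xi + \bm w_\la$ and $\bm\psi_\la := (s_{1\la}+{\rm i}s_{2\la})\bm\eta + \bm z_\la$ lie in $\mathcal N\bigl(A_{\beta_\la}(\la) - {\rm i}\la\nu_\la I\bigr)$, and scalar multiplication by $\kappa$ preserves this kernel membership.

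For the necessity direction, let $(\bm\varphi, \bm\psi, \nu, \beta)$ be an arbitrary solution of the eigenvalue problem with $\nu \geq 0$, $\beta\in\mathcal B$ and $(\bm\varphi, \bm\psi)^T\neq\bm 0$. I would decompose $\bm\varphi = \alpha\bm\xi + \bm w$ and $\bm\psi = \gamma\bm\eta + \bm z$ with $\alpha,\gamma\in\mathbb C$ and $\bm w,\bm z\in\mathcal M_{\mathbb C}$, then seek $\kappa\in\mathbb C\setminus\{0\}$ placing $(\kappa\bm\varphi, \kappa\bm\psi)$ into the template of \eqref{vps}. When $\alpha\neq 0$, the choice $\kappa = \rho e^{-{\rm i}\arg\alpha}$ with $\rho > 0$ fixed by $\rho^2(\|\bm\varphi\|_2^2 + \|\bm\psi\|_2^2) = \|\bm\xi\|_2^2 + \|\bm\eta\|_2^2$ produces $\delta := \rho|\alpha| > 0$ together with real $s_1, s_2$ extracted from $\kappa\gamma = s_1 + {\rm i}s_2$. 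Reversing the sufficiency reduction, the resulting seven-tuple solves \eqref{equi6}; the uniqueness in Theorem \ref{the1} identifies it with $(\delta_\la, s_{1\la}, s_{2\la}, \bm w_\la, \bm z_\la, \nu_\la, \beta_\la)$, yielding $\nu = \nu_\la$, $\beta = \beta_\la$ and $(\bm\varphi, \bm\psi) = \kappa^{-1}(\bm\varphi_\la, \bm\psi_\la)$.

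The main obstacle I expect is excluding the degenerate case $\alpha = 0$, which I would handle by contradiction: if $\alpha = 0$, normalizing with $\kappa>0$ real still produces a solution of \eqref{equi6}, but one for which $\delta = 0$; the uniqueness in Theorem \ref{the1} would then force $\delta_\la = 0$, contradicting the built-in positivity $\delta_\la \in (0,\infty)$ (itself inherited from $\delta_0 > 0$ in Lemma \ref{l5} by continuous dependence on $\la$). Hence $\alpha \neq 0$, and the argument of the previous paragraph closes the proof.
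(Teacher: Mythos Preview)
Your proposal is correct and matches the paper's approach: the paper does not write out a separate proof at all, stating only that the result follows from Theorem~\ref{the1}, and your argument is precisely the detailed unpacking of that implication (translate between the eigenvalue problem \eqref{v1} and the normalized system \eqref{equi6}, then invoke uniqueness). Your handling of the degenerate case $\alpha=0$ via the positivity $\delta_\la>0$ is a clean way to close the one loose end the paper leaves implicit.
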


To show that ${\rm i}\la\nu_\la$ is a simple eigenvalue of $A_{\beta_\la}(\la)$, we need to consider the eigenvalues of $A^T_{\beta_\la}(\la)$, where
$A^T_{\beta_\la}(\la)$
is the transpose of $A_{\beta_\la}(\la)$.
Denote
\begin{equation*}
\begin{split}
&\widetilde{\mathcal{M}}_1 =\{(x_1,\dots,x_n)^T \in \mathbb{R}^{n}: \sum_{j=1}^{n}\xi_j x_j =0\},\\
&\widetilde{\mathcal{M}}_2 =\{(x_1,\dots,x_n)^T \in \mathbb{R}^{n}: \sum_{j=1}^{n} \eta_j x_j =0\}.
\end{split}
\end{equation*}
Then
\begin{equation*}
\mathbb R^n={\rm span}\left\{\left( 1, \cdots,1 \right)^T\right\}\oplus  \widetilde{\mathcal{M}}_1={\rm span}\left\{\left( 1, \cdots,1 \right)^T\right\}\oplus  \widetilde{\mathcal{M}}_2.
\end{equation*}
\begin{lemma}\label{ad}
Let $ A_{\beta_\la}^T(\la)$ be the transpose of $A_{\beta_\la}(\la)$, where $\la\in(0,\la_2]$ and $\la_2$ defined in Theorem \ref{the1}. Then $-{\rm i}\la\nu_\la$ is an eigenvalue of $A_{\beta_\la}^T(\la)$, and
$$\mathcal N [ A_{\beta_\la}^T(\la) +  {\rm i}\la \nu_\la I] ={\rm span}  [(\bm {\tilde\varphi}_\la,\bm{\tilde \psi}_\la)^T].$$
Moreover, ignoring a scalar factor, $(\bm {\tilde\varphi}_\la,\bm{\tilde \psi}_\la)$ can be represented as
\begin{equation}\label{trans-vps}
\begin{cases}
\bm  {\tilde\varphi}_\la =\tilde\delta_\la {\left( 1, \cdots,1 \right)}^T+\bm {\tilde w}_\la, \;\;\text{where}\;\;\tilde \delta_\la\ge0\;\;\text{and}\;\;\bm {\tilde w}_\la\in\left(\widetilde{ \mathcal M}_{1}\right)_\mathbb C,\\
\bm {\tilde \psi}_\la  =(\tilde s_{1\la}+{\rm i}\tilde s_{ 2\la}) {\left( 1, \cdots,1 \right)}^T+\bm{ \tilde z}_\la,\;\;\text{where}\;\; \tilde s_{1\la},\tilde s_{2\la}\in\mathbb R\;\;\text{and}\;\;\bm {\tilde z}_\la \in\left(\widetilde{ \mathcal M}_{2}\right)_\mathbb C,\\
\|\bm  {\tilde\varphi}_\la\|_2^{2}+\|\bm {\tilde \psi}_\la\|_2^{2}=2n,
\end{cases}
\end{equation}
and
$
\left(\tilde \delta_\la,\tilde s_{1\la},\tilde s_{2\la},\bm {\tilde w}_{\la},\bm {\tilde z}_\la \right)=(\tilde\delta_0, \tilde s_{10},\tilde s_{20},\bm{ \tilde w}_0,\bm {\tilde z}_0 )$ for $\la=0$,
where
\begin{equation*}
\tilde s_{10}=\frac{\tilde \delta_0 \nu_0^2  }{\nu_0^2 + 1},
\;\tilde s_{20}=\frac{ \tilde \delta_0 \nu_0}{\nu_0^2 + 1},\;
\bm{ \tilde w}_0=\bm 0,\;\bm {\tilde z}_0=\bm 0,\;
\tilde\delta_0=  \sqrt{\frac{2\nu_0^2+2}{2\nu_0^2+1}},
\end{equation*}
and $\nu_0$ is defined in Lemma \ref{l5}.
\end{lemma}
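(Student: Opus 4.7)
The first two claims, that $-{\rm i}\la\nu_\la$ is an eigenvalue of $A^T_{\beta_\la}(\la)$ and that its kernel is one-dimensional, follow from Theorem \ref{the1*} and standard linear algebra. Since $A_{\beta_\la}(\la)$ is a real matrix and ${\rm i}\la\nu_\la$ is an eigenvalue of it by Theorem \ref{the1*}, the complex conjugate $-{\rm i}\la\nu_\la$ is also an eigenvalue with complex-conjugate eigenvector, and both the spectrum and the geometric multiplicities are preserved under transposition. Thus $\mathcal N[A^T_{\beta_\la}(\la)+{\rm i}\la\nu_\la I]$ is one-dimensional, and it remains to produce an explicit spanning vector with the structure \eqref{trans-vps}.

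For the construction, the plan is to mirror the implicit-function-theorem argument from the proof of Theorem \ref{the1}, now applied to the transposed system with the already-known values $\nu_\la,\beta_\la$. The key algebraic facts are: (i) the zero-column-sum property of $P$ and $Q$ gives $P^T\bm 1=Q^T\bm 1=\bm 0$, so $\bm 1=(1,\ldots,1)^T$ plays for the adjoint the role that $\bm\xi,\bm\eta$ played in Theorem \ref{the1}; (ii) by the Fredholm alternative, $\mathrm{range}(P^T)=\{v:\langle v,\bm\xi\rangle=0\}=\widetilde{\mathcal M}_1$, and since $\langle\bm 1,\bm\xi\rangle=1\neq 0$ we have $\mathbb R^n=\mathrm{span}\{\bm 1\}\oplus\widetilde{\mathcal M}_1$, which makes $P^T:\widetilde{\mathcal M}_1\to\widetilde{\mathcal M}_1$ an isomorphism, and similarly $\theta Q^T:\widetilde{\mathcal M}_2\to\widetilde{\mathcal M}_2$ an isomorphism. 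I would then define $\bm{\tilde H}(\tilde\delta,\tilde s_1,\tilde s_2,\bm{\tilde w},\bm{\tilde z},\la)$ componentwise in direct analogy with $\bm H$: take the $\bm\xi$-weighted sum of the first block of $(A^T_{\beta_\la}(\la)+{\rm i}\la\nu_\la I)(\bm{\tilde\varphi},\bm{\tilde\psi})^T=\bm 0$ and divide by $\la$ (the $P^T$-term drops since $P\bm\xi=\bm 0$) to obtain $\tilde h_1\in\mathbb C$; subtract $\la\tilde h_1$ from each first-block component to place residuals $(\tilde h_{2j})_j$ in $(\widetilde{\mathcal M}_1)_{\mathbb C}$; define $\tilde h_3,\tilde h_{4j}$ analogously from the second block using $\bm\eta$; and append the normalization $\tilde h_5=\|\bm{\tilde\varphi}\|_2^2+\|\bm{\tilde\psi}\|_2^2-2n$. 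Because $-{\rm i}\la\nu_\la$ is already known to be an eigenvalue, one of the two scalar equations is redundant on the solution set, so I would apply IFT to the reduced system $(\tilde h_1,\tilde h_{2j},\tilde h_{4j},\tilde h_5)=\bm 0$, whose $4n-1$ real components match the $4n-1$ real unknowns.

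At $\la=0$, the equations $\tilde h_{2j}=0$ reduce to $P^T\bm{\tilde w}=\bm 0$ in $(\widetilde{\mathcal M}_1)_{\mathbb C}$, forcing $\bm{\tilde w}_0=\bm 0$, and similarly $\bm{\tilde z}_0=\bm 0$. Using the identities $\sum_j\xi_jM_{1j}^{(0,\beta_0)}=\nu_0^2$, $\sum_j\xi_jM_{3j}^{(0,\beta_0)}=-(\nu_0^2+1)$, $\sum_j\eta_jM_{2j}^{(0,\beta_0)}=\nu_0^2$ (derived from \eqref{summs} together with $\beta_0\sum_jb_j\xi_j=\nu_0^2+1$), the equation $\tilde h_1=0$ yields $\tilde s_{10}=\nu_0^2\tilde\delta_0/(\nu_0^2+1)$ and $\tilde s_{20}=\nu_0\tilde\delta_0/(\nu_0^2+1)$, and the normalization $\tilde h_5=0$ fixes $\tilde\delta_0$ to the stated value. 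The main technical step, where I expect the bulk of the explicit work, is verifying bijectivity of the Fr\'echet derivative of the reduced system at $\la=0$: the $(\tilde\delta,\tilde s_1,\tilde s_2)$-derivatives of $\tilde h_{2j},\tilde h_{4j}$ vanish at $\la=0$ because each such term carries a factor of $\la$, and the $(\bm{\tilde w},\bm{\tilde z})$-derivative block is $(P^T,\theta Q^T)$, which is an isomorphism on $(\widetilde{\mathcal M}_1)_{\mathbb C}\times(\widetilde{\mathcal M}_2)_{\mathbb C}$; the remaining obstruction is a $3\times 3$ real system in $(\hat\delta,\hat s_1,\hat s_2)$ coming from $\tilde h_1$ (two real components) and $\tilde h_5$ (one real component), whose determinant I would compute to be $2n(\nu_0^2+1)(2\nu_0^2+1)\tilde\delta_0\neq 0$. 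The IFT then yields the continuously differentiable family $(\tilde\delta_\la,\tilde s_{1\la},\tilde s_{2\la},\bm{\tilde w}_\la,\bm{\tilde z}_\la)$ on $[0,\la_2]$, and the omitted equation $\tilde h_3=0$ is automatically satisfied: the one-dimensional eigenspace of $A^T_{\beta_\la}(\la)+{\rm i}\la\nu_\la I$ obtained in the first paragraph, normalized in accordance with \eqref{trans-vps}, produces a solution of the full system $\bm{\tilde H}=\bm 0$ which, by uniqueness of the reduced IFT solution, must coincide with the constructed family.
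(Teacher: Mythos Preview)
Your argument is correct but takes a different route from the paper. The paper does not set up an implicit-function-theorem system for the adjoint at all; having established (as you do) that $\mathcal N[A^T_{\beta_\la}(\la)+{\rm i}\la\nu_\la I]$ is one-dimensional for each $\la\in(0,\la_2]$, it simply picks the normalized eigenvector in the form \eqref{trans-vps}, observes that all components $\tilde\delta_\la,\tilde s_{1\la},\tilde s_{2\la},\bm{\tilde w}_\la,\bm{\tilde z}_\la$ are bounded by the normalization, extracts a convergent subsequence, and passes to the limit $\la\to 0$ directly in the equation $(A^T_{\beta_\la}(\la)+{\rm i}\la\nu_\la I)(\bm{\tilde\varphi}_\la,\bm{\tilde\psi}_\la)^T=\bm 0$. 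The limit of the raw equation gives $P^T\bm{\tilde\varphi}_*=Q^T\bm{\tilde\psi}_*=\bm 0$, forcing $\bm{\tilde w}_0=\bm{\tilde z}_0=\bm 0$; then left-multiplying the first and second blocks by $\bm\xi^T$ and $\bm\eta^T$ respectively (using $P\bm\xi=Q\bm\eta=\bm 0$) and letting $\la\to 0$ yields the $2\times 2$ linear system for $(\tilde\delta_0,\tilde s_{10}+{\rm i}\tilde s_{20})$, after which the normalization fixes $\tilde\delta_0$. Your IFT construction is more laborious---it requires building $\bm{\tilde H}$, identifying the redundant scalar equation, and checking the $3\times 3$ determinant---but it buys more: a continuously differentiable family on $[0,\la_2]$, whereas the paper's compactness argument gives only convergence (and, as written, only along subsequences, relying implicitly on uniqueness of the limit). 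For the lemma as stated and for its uses in Theorems \ref{sim} and \ref{the2}, the paper's shortcut is sufficient.
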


\begin{proof}
It follows from Theorem \ref{the1*} that, for $\la\in(0,\la_2]$, $\pm {\rm i}\la\nu_\la$ is an eigenvalue of $A_{\beta_\la}(\la)$ and $\mathcal N [ A_{\beta_\la}(\la) -{\rm i}\la \nu_\la I]$ is one-dimensional. Then
$-{\rm i}\la\nu_\la$ is an eigenvalue of $A^T_{\beta_\la}(\la)$, $\mathcal N [ A_{\beta_\la}^T(\la) +  {\rm i}\la \nu_\la I] ={\rm span}  [(\bm {\tilde\varphi}_\la,\bm{\tilde \psi}_\la)^T]$, and $(\bm {\tilde\varphi}_\la,\bm{\tilde \psi}_\la)$ can be represented as \eqref{trans-vps}.
It follows from \eqref{trans-vps} that
$\tilde\delta_\la$, $\tilde s_{1\la}$, $\tilde s_{2\la}$, $\bm {\tilde w}_\la$ and $\bm {\tilde z}_\la$ are bounded.
Then, up to a sequence, we assume that
\begin{equation*}
\begin{split}
&\lim_{\la\to0}\bm{\tilde\varphi}_{\la}=\bm{\tilde{\varphi}}_*,\;\lim_{\la\to0}\bm{\tilde\psi}_{\la}=\bm{\tilde{\psi}}_*,\;
\lim_{\la\to0} \bm {\tilde w}_\la=\bm{ \tilde w}_0,\;\lim_{\la\to0} \bm {\tilde z}_\la=\bm{ \tilde z}_0,\\
&\lim_{\la\to0}   \tilde s_{1\la}=\tilde s_{10},\;\lim_{\la\to0}   \tilde s_{2\la}=\tilde s_{20},\;
\lim_{\la\to0}  \tilde\delta_\la=\tilde \delta_0,
\end{split}
 \end{equation*}
where
\begin{equation}\label{starp}
\|\bm{\tilde{\varphi}}_*\|_2^2+\|\bm{\tilde{\psi}}_*\|_2^2=2n.
\end{equation}
Note that $\lim_{\la\to0}  \beta_{\la}=\beta_0$ and $\lim_{\la\to0}  \nu_{\la}= \nu_0$, where
 $\nu_0,\beta_0$ are defined in Lemma \ref{l5}.
Taking the limit of
\begin{equation}\label{det}
\left( A_{\beta_{\la}}^T({\la})+ {\rm i}{\la} \nu_{\la} I \right) (\bm {\tilde\varphi}_{\la}, \bm{\tilde \psi}_{\la})^T=\bm 0,
\end{equation}
as $\la\to0$, we have $P^T\bm{\varphi}_*=\bm 0$ and $Q^T\bm{\psi}_*=\bm 0$, which yield $\bm{\tilde w}_{0}=\bm 0$ and $\bm{\tilde z}_{0}=\bm 0$.
Noticing that $P\bm{\xi}=Q\bm{\eta}=\bm 0$, we see from \eqref{det} that
\begin{equation*}
\left( {\begin{array}{*{20}{c}}
 \sum_{j=1}^n  M_{1j}^{(0,\beta_0)}\xi_j +{\rm i}\nu_0 & \sum_{j=1}^n  M_{3j}^{(0,\beta_0)}\xi_j\\
 \sum_{j=1}^n  M_{2j}^{(0,\beta_0)}  \eta_j &-\sum_{j=1}^n  M_{2j}^{(0,\beta_0)}  \eta_j+{\rm i}\nu_0
\end{array}} \right)\left( {\begin{array}{*{20}{c}}
\tilde\delta_0\\
\tilde s_{10} + {\rm i} \tilde s_{20}
\end{array}} \right)= \left( {\begin{array}{*{20}{c}}
0\\
0
\end{array}} \right).
\end{equation*}
This, combined with Eqs. \eqref{summs} and \eqref{bns12}, implies that
\begin{equation}\label{ti}
\tilde s_{10}=\frac{\tilde \delta_0 \nu_0^2  }{\nu_0^2 + 1},
\;\tilde s_{20}=\frac{ \tilde \delta_0 \nu_0}{\nu_0^2 + 1}.
\end{equation}
Note from \eqref{starp} that $\tilde\delta_0^2+\tilde s_{ 10}^2+\tilde s_{ 20}^2=2.$
This, combined with \eqref{ti}, implies that
$$\tilde\delta_0=  \sqrt{\frac{2\nu_0^2+2}{2\nu_0^2+1}}.$$
This completes the proof.
\end{proof}

Now, we show that ${\rm i} \la \nu_\la$ is simple.

\begin{theorem}\label{sim}
Assume that $\la \in (0, \la_2]$, where $\la_2$ is sufficiently small. Then ${\rm i} \la \nu_\la$ is a simple eigenvalue of $ A_{\beta_\la}(\la) $, where $ A_{\beta}(\la) $ is defined in \eqref{ph1}.
\end{theorem}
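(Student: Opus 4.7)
The plan is to show simplicity via the standard two-part characterization: first geometric simplicity (the kernel has dimension one), then algebraic simplicity (no generalized eigenvectors), the latter reduced to a nonvanishing pairing between the kernel of $A_{\beta_\lambda}(\lambda)-{\rm i}\lambda\nu_\lambda I$ and the kernel of its adjoint.

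Geometric simplicity is already in hand. Indeed, Theorem \ref{the1*} shows that any solution of $(A_{\beta_\lambda}(\lambda)-{\rm i}\lambda\nu_\lambda I)(\bm\varphi,\bm\psi)^T=\bm 0$ with $\nu_\lambda\ge 0$ is a scalar multiple of $(\bm\varphi_\lambda,\bm\psi_\lambda)^T=(\delta_\lambda\bm\xi+\bm w_\lambda,(s_{1\lambda}+{\rm i}s_{2\lambda})\bm\eta+\bm z_\lambda)^T$, so $\mathcal N[A_{\beta_\lambda}(\lambda)-{\rm i}\lambda\nu_\lambda I]$ is one-dimensional.

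For algebraic simplicity, by the Fredholm alternative on $\mathbb C^{2n}$ (using the standard Hermitian inner product), the eigenvector $(\bm\varphi_\lambda,\bm\psi_\lambda)^T$ lies in the range of $A_{\beta_\lambda}(\lambda)-{\rm i}\lambda\nu_\lambda I$ if and only if it is orthogonal to $\mathcal N[A_{\beta_\lambda}^T(\lambda)+{\rm i}\lambda\nu_\lambda I]$, which by Lemma \ref{ad} is spanned by $(\bm{\tilde\varphi}_\lambda,\bm{\tilde\psi}_\lambda)^T$. Hence it suffices to verify
\begin{equation*}
I(\lambda):=\langle(\bm{\tilde\varphi}_\lambda,\bm{\tilde\psi}_\lambda)^T,(\bm\varphi_\lambda,\bm\psi_\lambda)^T\rangle=\sum_{j=1}^n\overline{\tilde\varphi_{\lambda j}}\varphi_{\lambda j}+\sum_{j=1}^n\overline{\tilde\psi_{\lambda j}}\psi_{\lambda j}\ne 0
\end{equation*}
for small $\lambda$. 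Since $I(\lambda)$ is continuous on $[0,\lambda_2]$ by Theorem \ref{the1} and Lemma \ref{ad}, I only need to show $I(0)\ne 0$ and then shrink $\lambda_2$ if necessary.

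The computation of $I(0)$ is straightforward. Using $\bm w_0=\bm z_0=\bm{\tilde w}_0=\bm{\tilde z}_0=\bm 0$ together with $\sum_j\xi_j=\sum_j\eta_j=1$, one gets
\begin{equation*}
I(0)=\tilde\delta_0\delta_0+(\tilde s_{10}-{\rm i}\tilde s_{20})(s_{10}+{\rm i}s_{20}).
\end{equation*}
Substituting $s_{10}=-\delta_0$, $s_{20}=\delta_0/\nu_0$, $\tilde s_{10}=\tilde\delta_0\nu_0^2/(\nu_0^2+1)$, $\tilde s_{20}=\tilde\delta_0\nu_0/(\nu_0^2+1)$ and simplifying yields $I(0)=2\tilde\delta_0\delta_0(1+{\rm i}\nu_0)/(\nu_0^2+1)$, which is nonzero since $\delta_0,\tilde\delta_0,\nu_0>0$. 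This gives $I(\lambda)\ne 0$ for $\lambda$ small, so $(\bm\varphi_\lambda,\bm\psi_\lambda)^T\notin\mathrm{Range}(A_{\beta_\lambda}(\lambda)-{\rm i}\lambda\nu_\lambda I)$, which combined with one-dimensionality of the kernel gives that ${\rm i}\lambda\nu_\lambda$ is a simple eigenvalue. The main (and only) obstacle is purely bookkeeping: carrying the explicit $\lambda=0$ values from Lemmas \ref{l5} and \ref{ad} through the Hermitian pairing without sign or conjugation errors; everything else is soft (continuity and Fredholm).
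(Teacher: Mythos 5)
Your proposal is correct and follows essentially the same route as the paper: both arguments reduce algebraic simplicity to the nonvanishing of the pairing $\langle(\bm{\tilde\varphi}_\la,\bm{\tilde\psi}_\la)^T,(\bm\varphi_\la,\bm\psi_\la)^T\rangle$, evaluated in the limit $\la\to 0$ using Lemma \ref{ad}, and your value $2\delta_0\tilde\delta_0(1+{\rm i}\nu_0)/(\nu_0^2+1)$ matches the paper's \eqref{ptilde}. The only (cosmetic) difference is that you phrase the exclusion of generalized eigenvectors via the Fredholm alternative, whereas the paper writes out $[A_{\beta_\la}(\la)-{\rm i}\la\nu_\la I]\bm\Psi=s(\bm\varphi_\la,\bm\psi_\la)^T$ and shows $s=0$.
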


\begin{proof}
It follows from Theorem \ref{the1*} that $\mathcal N [ A_{\beta_\la}(\la) -  {\rm i}\la \nu_\la I] ={\rm span}  [(\bm \varphi_\la, \bm \psi_\la)^T]$, where $\bm \varphi_\la, \bm \psi_\la$ is defined in Theorem \ref{the1}. Then we show that
\begin{equation*}
\mathcal N [ A_{\beta_\la}(\la) -  {\rm i}\la \nu_\la I]^2 =\mathcal N [ A_{\beta_\la}(\la) -  {\rm i}\la \nu_\la I].
\end{equation*}
If $\bm \Psi =( \bm \Psi_1,  \bm \Psi_2)^T \in \mathcal N [ A_{\beta_\la}(\la) -  {\rm i}\la \nu_\la I]^2$, where $\bm \Psi_1=(\Psi_{11},\dots,\Psi_{1n})^T\in{\mathbb C}^n$ and $\bm \Psi_2=(\Psi_{21},\dots,\Psi_{2n})^T\in{\mathbb C}^n$, then
\begin{equation*}
 [ A_{\beta_\la}(\la) -  {\rm i}\la \nu_\la I] \bm \Psi  \in \mathcal N [ A_{\beta_\la}(\la) -  {\rm i}\la \nu_\la I]={\rm span}  [(\bm \varphi_\la, \bm \psi_\la)^T],
\end{equation*}
which implies that there exists a constant $s$ such that
\begin{equation*}
 [ A_{\beta_\la}(\la) -  {\rm i}\la \nu_\la I] \bm \Psi  =s (\bm \varphi_\la, \bm \psi_\la)^T.
\end{equation*}
That is,
%\begin{equation}\label{Psi1}
%\begin{split}
%&\bm {\dot\Psi} =  {\rm i}\la \nu_\la I \bm \Psi + s (\bm \varphi_\la, \bm \psi_\la)^T,\\
%&\bm {\dot\Psi} =   A_{\beta_\la}(\la) \bm \Psi.
%\end{split}
%\end{equation}
%Plugging the second into the first equation of \eqref{Psi1}, we have
\begin{equation}\label{Psi2}
\begin{cases}
\ds  s \varphi_{\la j} = \sum_{j=1}^np_{ij} \Psi_{1j} +\la \left[ M_{1j}^{(\la,{\beta_\la})} \Psi_{1j} +M_{2j}^{(\la,{\beta_\la})}  \Psi_{2j} \right]- {\rm i}\la \nu_\la \Psi_{1j}, \;j=1,\dots,n,\\
\ds s \psi_{\la j}= \theta\sum_{j=1}^n q_{ij} \tilde \Psi_{2j}  + \la \left[ M_{3j}^{(\la,{\beta_\la})} \Psi_{1j} -M_{2j} ^{(\la,{\beta_\la})} \Psi_{2j}\right]- {\rm i}\la \nu_\la \Psi_{2j},\;j=1,\dots,n.
\end{cases}
\end{equation}
It follows from Lemma \ref{ad} that
$$\left( A_{\beta_\la}^T(\la)+ {\rm i}\la \nu_\la I \right) (\bm {\tilde\varphi}_\la, \bm{\tilde \psi}_\la)^T=\bm 0.$$
Then, multiplying the first and second equation of \eqref{Psi2} by $ \overline{\tilde\varphi _{\la j}}$ and $ \overline{\tilde\psi_{\la j}}$, respectively,
and summing the results over all $j$, we have
\begin{equation*}
\begin{split}
0&=\left\langle \left( A_{\beta_\la}^T(\la)+{\rm i}\la \nu_\la I \right)(\bm {\tilde\varphi}_\la, \bm{\tilde \psi}_\la)^T ,\bm \Psi  \right\rangle=\left\langle (\bm {\tilde\varphi}_\la,\bm{\tilde \psi}_\la)^T, \left( A_{\beta_\la}(\la)- {\rm i}\la \nu_\la I \right)\bm \Psi    \right\rangle \\
&=s \sum_{j=1}^n \left( \overline{\tilde\varphi_{\la j}} \varphi_{\la j}+  \overline{ \tilde\psi_{\la j}} \psi_{\la j}\right).\\
\end{split}
\end{equation*}
It follows from Theorem \ref{the1} and Lemma \ref{ad} that
\begin{equation}\label{ptilde}
\begin{split}
\lim_{\la\to 0} \sum_{j=1}^n \left( \overline{\tilde\varphi_{\la j}} \varphi_{\la j}+\overline{ \tilde\psi_{\la j} }\psi_{\la j}\right)&=\delta_0 \tilde\delta_0 \sum_{j=1}^n\left(\xi_j+ \frac{\nu_0}{\nu_0^2+1}(\nu_0-{\rm i})(\frac{{\rm i}}{\nu_0}-1)\eta_j  \right)\\
&=2\delta_0 \tilde\delta_0  \frac{{\rm i}\nu_0+1}{\nu_0^2+1}\neq0,
\end{split}
\end{equation}
which implies that $s=0$ for $\la\in(0,\la_2]$, where $\la_2$ is sufficiently small. Therefore,
\begin{equation*}
\mathcal N [ A_{\beta_\la}(\la) -  {\rm i}\la \nu_\la I]^j =\mathcal N [ A_{\beta_\la}(\la) -  {\rm i}\la \nu_\la I],\;\;j=2,3,\cdots.
\end{equation*}
This completes the proof.
\end{proof}

It follows from Theorem \ref{sim} that $\mu={\rm i}\la\nu_\la$ is a simple eigenvalue of $ A_{\beta_\la}(\la) $ for fixed $\la\in(0,\la_2]$. Then, by using the implicit function
theorem, we see that there exists a neighborhood $V_1\times V_2\times O$ of $ (\bm \varphi_\la,\bm \psi_\la, {\rm i}\nu_\la,\beta_\la)$ ( $V_1$  is defined as the neighborhood of $(\bm \varphi_\la,\bm \psi_\la)$ ) and a continuously differentiable function  $ (\bm \varphi(\beta),\bm \psi(\beta), \mu(\beta)):O\to V_1\times V_2$ such that $\mu(\beta_\la)={\rm i}\nu_\la,\;(\bm \varphi(\beta_\la),\bm \psi(\beta_\la))=(\bm \varphi_\la,\bm \psi_\la)$. Moreover, for each $\beta \in O$, the only eigenvalue of $A_{\beta}(\la)$ in $V_2$ is $\mu(\beta)$, and
\begin{equation}\label{tra}
\left(A_{\beta}(\la)- \mu(\beta) I\right)(\bm \varphi(\beta),\bm \psi(\beta))^T=\bm 0.
\end{equation}
Then, we show that the following transversality condition holds.

\begin{theorem}\label{the2}
For $\la \in (0,\la_2 ]$, where $\la_2$ is sufficiently small,
\begin{equation*}
 \left.\frac{ d \mathcal Re [\mu(\beta)]}{ d \beta}\right|_{\beta=\beta_\la} > 0.
\end{equation*}
\end{theorem}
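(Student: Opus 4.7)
The approach is the standard eigenvalue perturbation formula combined with the adjoint eigenvector of Lemma \ref{ad}, followed by an expansion in $\la$. Differentiating \eqref{tra} in $\beta$ at $\beta=\beta_\la$ yields
\begin{equation*}
\frac{dA_\beta(\la)}{d\beta}\bigg|_{\beta_\la}(\bm\varphi_\la,\bm\psi_\la)^T-\mu'(\beta_\la)(\bm\varphi_\la,\bm\psi_\la)^T+\bigl(A_{\beta_\la}(\la)-{\rm i}\la\nu_\la I\bigr)\frac{d(\bm\varphi,\bm\psi)^T}{d\beta}\bigg|_{\beta_\la}=\bm 0.
\end{equation*}
Pairing this with $(\bm{\tilde\varphi}_\la,\bm{\tilde\psi}_\la)^T$ and invoking the adjoint identity $(A_{\beta_\la}^T(\la)+{\rm i}\la\nu_\la I)(\bm{\tilde\varphi}_\la,\bm{\tilde\psi}_\la)^T=\bm 0$ from Lemma \ref{ad} to kill the last term, I obtain
\begin{equation*}
\mu'(\beta_\la)=\frac{\bigl\langle(\bm{\tilde\varphi}_\la,\bm{\tilde\psi}_\la)^T,\;\frac{dA_\beta(\la)}{d\beta}|_{\beta_\la}(\bm\varphi_\la,\bm\psi_\la)^T\bigr\rangle}{\bigl\langle(\bm{\tilde\varphi}_\la,\bm{\tilde\psi}_\la)^T,(\bm\varphi_\la,\bm\psi_\la)^T\bigr\rangle}.
\end{equation*}

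From \eqref{ph1}, $\frac{dA_\beta(\la)}{d\beta}$ is $\la$ times a block matrix built from $\frac{dM_i}{d\beta}$ ($i=1,2,3$), so the numerator carries an explicit factor $\la$. Writing $\mu'(\beta_\la)=\la\,R(\la)$, I will compute $\lim_{\la\to 0^+}R(\la)$. The denominator has already been handled in \eqref{ptilde}: it converges to $2\delta_0\tilde\delta_0(1+{\rm i}\nu_0)/(\nu_0^2+1)\ne 0$. For the numerator, note that at $\la=0$ the equilibrium collapses to $\bm x^{(0,\beta)}=c_0\bm\xi$, $\bm y^{(0,\beta)}=r_{0\beta}\bm\eta$, so $M_{2j}^{(0,\beta)}=c_0^2\xi_j^2$ is independent of $\beta$ and only the first block-column of $dA_\beta/d\beta$ contributes. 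Using $dr_{0\beta}/d\beta=(\sum_k b_k\xi_k)/(c_0\sum_k\xi_k^2\eta_k)$ from Lemma \ref{l1}, a direct computation yields the two key algebraic identities
\begin{equation*}
\sum_{j=1}^n\xi_j\frac{dM_{1j}^{(0,\beta)}}{d\beta}=\sum_{j=1}^n b_j\xi_j,\qquad\sum_{j=1}^n\xi_j\frac{dM_{3j}^{(0,\beta)}}{d\beta}=-\sum_{j=1}^n b_j\xi_j.
\end{equation*}

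Plugging these, together with the explicit forms of $(\delta_0,s_{10},s_{20})$ from Lemma \ref{l5} and $(\tilde\delta_0,\tilde s_{10},\tilde s_{20})$ from Lemma \ref{ad}, into the numerator reduces it at $\la=0$ to $\delta_0\tilde\delta_0\bigl(\sum_{j=1}^n b_j\xi_j\bigr)(1+{\rm i}\nu_0)/(\nu_0^2+1)$. The complex phase cancels exactly with that of the denominator, giving $\lim_{\la\to 0^+}R(\la)=\tfrac{1}{2}\sum_{j=1}^n b_j\xi_j>0$, a real positive number. By continuity, $\mathcal Re[\mu'(\beta_\la)/\la]>0$, hence $d\mathcal Re[\mu(\beta)]/d\beta|_{\beta=\beta_\la}=\mathcal Re\mu'(\beta_\la)>0$, for all sufficiently small $\la>0$, after possibly shrinking $\la_2$. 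The only subtle point I expect is this cancellation of the complex phase $(1+{\rm i}\nu_0)/(\nu_0^2+1)$ between numerator and denominator at leading order, which ensures that the leading coefficient is \emph{real} positive rather than merely nonzero; the rest is routine differentiation, supported by the implicit function theorem regularity of $\bm x^{(\la,\beta)},\bm y^{(\la,\beta)}$ supplied by Theorem \ref{thglobal}.
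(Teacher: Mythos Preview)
Your proposal is correct and follows essentially the same approach as the paper's own proof: differentiate the eigenvalue relation \eqref{tra}, pair with the adjoint eigenvector from Lemma~\ref{ad} to eliminate the derivative of the eigenfunction, then pass to the limit $\la\to0$ using the regularity from Theorem~\ref{thglobal} and the explicit limiting data from Lemma~\ref{l5} and Lemma~\ref{ad}. Your computation of the limiting numerator and denominator, the phase cancellation, and the final value $\tfrac12\sum_j b_j\xi_j$ all agree with the paper's calculation verbatim.
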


\begin{proof}
Differentiating \eqref{tra} with respect to $\beta$ at $\beta=\beta_\la,$ we have
\begin{equation}\label{tra1}
\left.\frac{ d \mu}{ d \beta}\right|_{\beta=\beta_\la} (\bm \varphi_\la,\bm \psi_\la)^T= \left(A_{\beta_\la}(\la)- {\rm i}\nu_\la I\right)\left.\left(\frac{d\bm \varphi }{ d \beta}, \frac{d\bm \psi }{ d \beta}  \right)^T\right|_{\beta=\beta_\la}+\left.\frac{d A_{\beta}(\la) }{ d \beta} \right|_{\beta=\beta_\la} (\bm \varphi_\la,\bm \psi_\la)^T.
\end{equation}
Note that
\begin{equation*}
\begin{split}
&\left\langle(\bm {\tilde\varphi}_\la, \bm{\tilde \psi}_\la)^T ,\left(A_{\beta_\la}(\la)- {\rm i}\nu_\la I\right)\left.\left(\frac{d\bm \varphi }{ d \beta}, \frac{d\bm \psi }{ d \beta}  \right)^T\right|_{\beta=\beta_\la}\right\rangle\\
&=\left\langle\left( A_{\beta_\la}^T(\la)+{\rm i}\la \nu_\la I \right)({\bm {\tilde\varphi}_\la}, {\bm{\tilde \psi}_\la})^T, \left.\left(\frac{d\bm \varphi }{ d \beta}, \frac{d\bm \psi }{ d \beta}  \right)^T\right|_{\beta=\beta_\la} \right\rangle=0,
\end{split}
\end{equation*}
where $\bm {\tilde\varphi}_\la$ and $\bm{\tilde \psi}_\la$ are defined in Lemma \ref{ad}.
Then we see from \eqref{tra1} that
\begin{equation}\label{ptilde2}
\left.\frac{ d \mu}{ d \beta}\right|_{\beta=\beta_\la} \left\langle({\bm {\tilde\varphi}_\la}, {\bm{\tilde \psi}_\la})^T ,(\bm {\varphi}_\la, \bm{ \psi}_\la)^T \right\rangle=\left\langle({\bm {\tilde\varphi}_\la}, {\bm{\tilde \psi}_\la})^T ,\left.\frac{d A_{\beta}(\la) }{ d \beta} \right|_{\beta=\beta_\la}(\bm {\varphi}_\la, \bm{ \psi}_\la)^T \right\rangle.\\
\end{equation}
%Then, we have
%\begin{equation*}
%\frac{ d \mu(\beta_\la)}{ d \beta}=\frac{\left\langle(\overline{\bm {\tilde\varphi}_\la}, \overline{\bm{\tilde \psi}_\la})^T ,\ds\left.\frac{d A_{\beta}(\la) }{ d \beta} \right|_{\beta_\la}(\bm {\varphi}_\la, \bm{ \psi}_\la)^T \right\rangle}{\left\langle(\overline{\bm {\tilde\varphi}_\la}, \overline{\bm{\tilde \psi}_\la})^T ,(\bm {\varphi}_\la, \bm{ \psi}_\la)^T \right\rangle}.
%\end{equation*}
It follows from Theorem \ref{thglobal} that
$(\bm x^{(\la,\beta)},\bm y^{(\la,\beta)})$ is continuously differentiable on $[0,\lambda_2]\times \mathcal B$. This, combined the fact that
$\lim_{\la\to0}\beta_\la=\beta_0$, implies that
\begin{equation}\label{limm123}
\lim_{\la\to0}\left.\ds\f{dM^{(\la,\beta)}_{lj}}{d\beta}\right|_{\beta=\beta_\la}=\left.\ds\f{dM^{(0,\beta)}_{lj}}{d\beta}\right|_{\beta=\beta_0}
\;\;\text{for}\;\;l=1,2,3\;\;\text{and}\;\;j=1,\dots,n,
\end{equation}
where $M^{(\la,\beta)}_{lj}$ is defined in \eqref{Mi}.
Then we see from \eqref{c0rbeta} and \eqref{limm123} that
$$
\lim_{\la\to0}\left.\frac{d A_{\beta}(\la) }{ d \beta} \right|_{\beta=\beta_\la}=\la\left( {\begin{array}{cc}
S&O_{n \times n}\\
-S&O_{n \times n}
\end{array}} \right),
$$
where
$$S={\rm diag}\left( \frac{2\sum_{j=1}^nb_j\xi_j}{\sum_{j=1}^n \xi_j^2\eta_j}\xi_j\eta_j-b_j \right),$$
and $O_{n \times n}$ is a zero matrix of $n\times n.$
It follows from Theorem \ref{the1} and Lemma \ref{ad} that
\begin{equation*}
\lim_{\la\to0}\ds\frac{1}{\la}\left\langle({\bm {\tilde\varphi}_\la}, {\bm{\tilde \psi}_\la})^T ,\left.\frac{d A_{\beta}(\la) }{ d \beta} \right|_{\beta=\beta_\la}(\bm {\varphi}_\la, \bm{ \psi}_\la)^T \right\rangle
=\ds \frac{\delta_0 \tilde\delta_0\sum_{j=1}^n  b_j \xi _j}{\nu_0^2+1}(1+{\rm i}\nu_0).
\end{equation*}
This, combined with \eqref{ptilde} and \eqref{ptilde2}, yields
$$ \lim_{\la\to 0} \frac{1}{\la} \left.\frac{ d \mathcal Re [\mu(\beta)]}{ d \beta}\right|_{\beta=\beta_\la}=\frac{1}{2} \sum_{j=1}^n  b_j \xi _j > 0.$$
\end{proof}

\begin{figure}[htbp]
\centering
\setlength{\abovecaptionskip}{-1cm}
\includegraphics[width=0.5\textwidth]{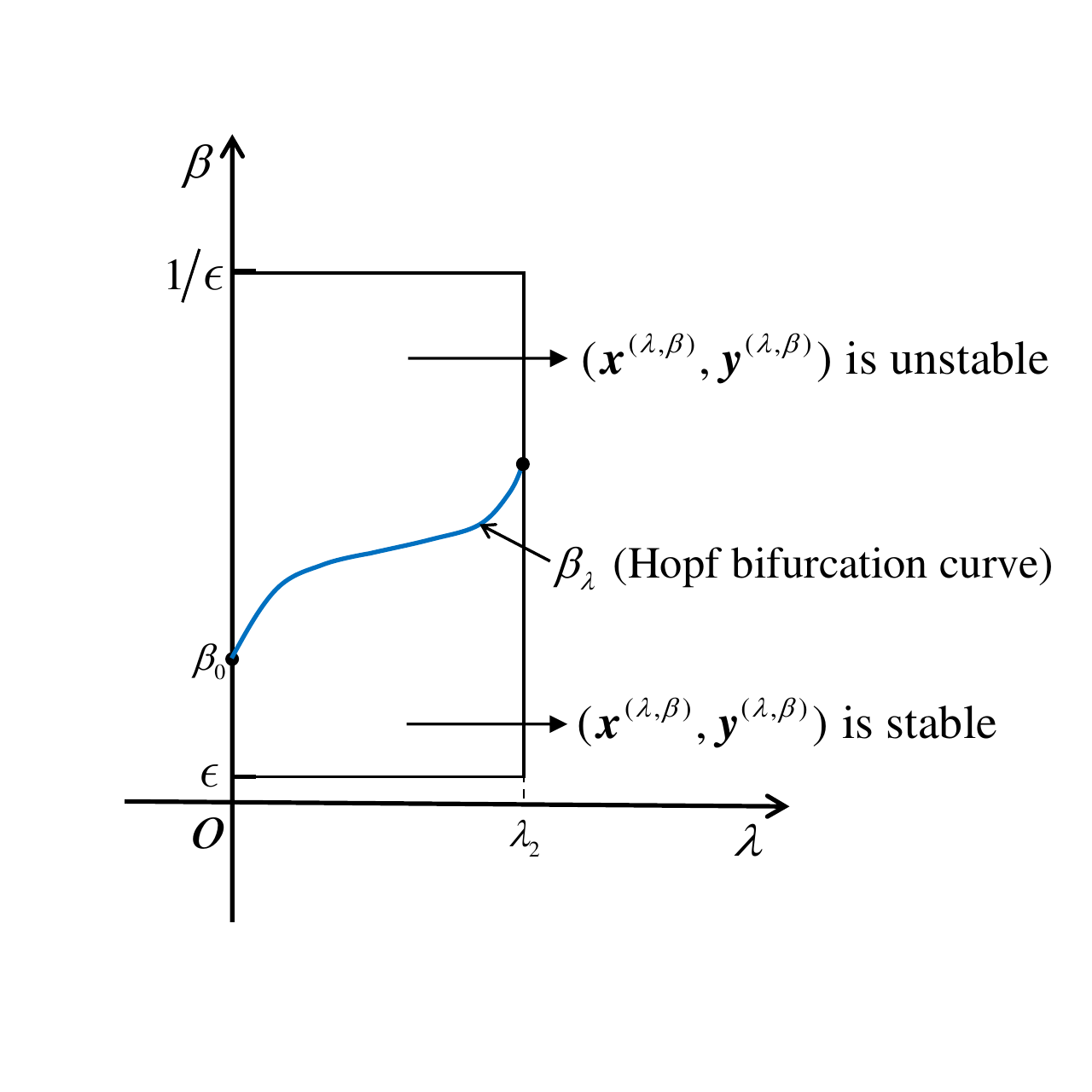}
\caption{Bifurcation diagram of model \eqref{m2} with respect to $\beta$ when $ \la$ is small. \label{fig2}}
\end{figure}
From Theorems \ref{thglobal}, \ref{the1},  \ref{sim} and \ref{the2},  we can obtain the following results on the dynamics of model \eqref{m2}, see also Fig. \ref{fig2}.
\begin{theorem} \label{ed}
Assume that $0<\epsilon\ll1$. Then there exists $\la_2>0$ (depending on $\epsilon$) and a continuously differentiable mapping
$$\la\mapsto\beta_\la:
[0,\la_2]\to\mathcal B=[\epsilon,1/\epsilon]$$ such that, for each $\la\in(0,\la_2]$, the positive equilibrium
$(\bm x^{(\la,\beta)},\bm y^{(\la,\beta)})$ of model \eqref{m2} is locally asymptotically stable when $\beta \in [\epsilon,\beta_\la)$ and unstable when $\beta \in (\beta_\la, \frac{1}{\epsilon}]$. Moreover, system \eqref{m2} undergoes a Hopf bifurcation at $(\bm x^{(\la,\beta)},\bm y^{(\la,\beta)})$ when $\beta=\beta_\la$.
\end{theorem}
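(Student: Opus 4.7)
The plan is to bundle Theorems \ref{thglobal}, \ref{the1}, \ref{sim} and \ref{the2} into a Hopf bifurcation statement of the standard Crandall--Rabinowitz / Hassard--Kazarinoff--Wan type. The bifurcation value $\beta_\la$, the simple purely imaginary eigenvalue $\mathrm{i}\la\nu_\la$, and the transversal crossing are already in hand from the preceding theorems; what remains is a spectral gap, namely that for every $(\la,\beta)\in(0,\la_2]\times\mathcal B$, all eigenvalues of $A_\beta(\la)$ other than the bifurcating pair $\pm\mathrm{i}\la\nu_\la$ lie in the open left half-plane, and this must be uniform in $\beta$.

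First I would analyse $A_\beta(0)=\mathrm{diag}(P,\theta Q)$, whose spectrum is $\sigma(P)\cup\sigma(\theta Q)$ and is independent of $\beta$. By assumption $(\mathbf A_1)$, the Perron--Frobenius theorem and \cite[Corollary 4.3.2]{Smith1995Monotone}, $0$ is a simple eigenvalue of each of $P$ and $\theta Q$ while every other eigenvalue has strictly negative real part. Hence there is a $\beta$-independent constant $c>0$ such that the $2n-2$ non-zero eigenvalues of $A_\beta(0)$ satisfy $\mathcal Re\,\mu\le -c$. Continuity of spectra in $\la$ together with compactness of $\mathcal B$ then gives, after possibly shrinking $\la_2$, the bound $\mathcal Re\,\mu\le -c/2$ for the $2n-2$ perturbed eigenvalues of $A_\beta(\la)$ uniformly on $(0,\la_2]\times\mathcal B$. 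The two remaining eigenvalues of $A_\beta(\la)$ are precisely those bifurcating from $0$, and Theorem \ref{the1} asserts that $\beta=\beta_\la$ is the \emph{only} $\beta\in\mathcal B$ at which either of them can be purely imaginary; combined with Theorem \ref{sim} this yields the required simplicity and non-resonance.

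Next, I would let $\mu_1(\beta)$ and $\mu_2(\beta)=\overline{\mu_1(\beta)}$ denote the bifurcating pair, continuous in $\beta$ and coinciding with $\pm\mathrm{i}\la\nu_\la$ at $\beta=\beta_\la$. Uniqueness in Theorem \ref{the1} forces $\mathcal Re\,\mu_1(\beta)\ne 0$ on $\mathcal B\setminus\{\beta_\la\}$, so by connectedness $\mathcal Re\,\mu_1$ has constant sign on each of the intervals $[\epsilon,\beta_\la)$ and $(\beta_\la,1/\epsilon]$. The transversality in Theorem \ref{the2} pins down the local sign as $\mathcal Re\,\mu_1(\beta)<0$ just below $\beta_\la$ and $>0$ just above, and this local sign therefore persists throughout each whole sub-interval. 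Together with the uniform left half-plane bound on the other $2n-2$ eigenvalues, this yields local asymptotic stability of $(\bm x^{(\la,\beta)},\bm y^{(\la,\beta)})$ on $[\epsilon,\beta_\la)$ and instability on $(\beta_\la,1/\epsilon]$. The Hopf bifurcation conclusion then follows from the classical Hopf theorem, whose hypotheses---simple purely imaginary pair, no other spectrum on the imaginary axis, transversal crossing---are supplied by Theorems \ref{sim} and \ref{the2} plus the non-resonance verified above.

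The main obstacle I anticipate is the uniform-in-$\beta$ control of the $2n-2$ non-bifurcating eigenvalues. Although the $\la=0$ spectrum is $\beta$-free and transparent, the $O(\la)$ perturbation encoded by $M_1,M_2,M_3$ depends on $\beta$ through the equilibrium $(\bm x^{(\la,\beta)},\bm y^{(\la,\beta)})$. One must leverage the continuous differentiability of the equilibrium curve on the compact set $\mathcal B$ provided by Theorem \ref{thglobal} to bound the $M_{lj}^{(\la,\beta)}$ uniformly in $\beta$, and then invoke a standard eigenvalue perturbation argument to preserve the spectral gap. Once that uniformity is secured, the rest of the proof is routine assembly of the already-established pieces.
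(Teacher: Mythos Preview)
Your proposal is correct and reaches the same conclusion, but it is organized differently from the paper's argument.

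The paper does not split the spectrum into a ``far'' block of $2n-2$ eigenvalues and a ``near'' bifurcating pair. Instead it argues as follows: by continuity of the eigenvalues of $A_\beta(\la)$ in $\beta$ and the fact (Theorem~\ref{the1*}) that $\beta_\la$ is the \emph{only} $\beta\in\mathcal B$ at which $A_\beta(\la)$ has a purely imaginary eigenvalue, it suffices to verify that the whole spectrum of $A_\epsilon(\la)$ lies in the open left half-plane for small $\la$. This single check is done by contradiction: assuming a sequence $\la_l\to 0$ with $\mathcal Re\,\mu_{\la_l}\ge 0$, one uses Lemma~\ref{l3} to pass to the limit $\mu_{\la_l}/\la_l\to\mu^*$ and reduces to the $2\times 2$ matrix
\[
\begin{pmatrix}\sum_j M_{1j}^{(0,\epsilon)}\xi_j & \sum_j M_{2j}^{(0,\epsilon)}\eta_j\\[2pt] \sum_j M_{3j}^{(0,\epsilon)}\xi_j & -\sum_j M_{2j}^{(0,\epsilon)}\eta_j\end{pmatrix},
\]
whose trace is negative for small $\epsilon$, contradicting $\mathcal Re\,\mu^*\ge 0$. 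Stability on $[\epsilon,\beta_\la)$, instability on $(\beta_\la,1/\epsilon]$, and the Hopf bifurcation then follow from continuity, the uniqueness of the crossing, and Theorems~\ref{sim}--\ref{the2}.

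Your route---controlling the $2n-2$ eigenvalues emanating from the nonzero spectrum of $\mathrm{diag}(P,\theta Q)$ by a uniform-in-$\beta$ perturbation estimate, and tracking the two eigenvalues emanating from the semisimple double zero via Theorem~\ref{the1} plus connectedness---is a legitimate alternative that yields more explicit spectral information. The price is precisely the uniformity issue you flag: you must bound $M_{lj}^{(\la,\beta)}$ uniformly on $[0,\la_2]\times\mathcal B$ (which Theorem~\ref{thglobal} indeed supplies) and then invoke standard finite-dimensional perturbation theory. The paper's approach sidesteps this by checking only the endpoint $\beta=\epsilon$ and letting Theorem~\ref{the1*} do the rest; it is shorter, but your decomposition makes the mechanism (only the pair near zero can destabilize) more transparent. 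One small point: your labeling $\mu_2(\beta)=\overline{\mu_1(\beta)}$ presumes the near pair is complex conjugate throughout $\mathcal B$, which need not hold; it is cleaner to argue with the unordered pair $\{\mu_1(\beta),\mu_2(\beta)\}$ and note that neither can touch the imaginary axis away from $\beta_\la$ since $A_\beta(\la)$ is real and Theorem~\ref{the1*} covers $\nu\ge 0$ (hence also $\nu<0$ by conjugation, and $\nu=0$).
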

\begin{proof}
Note from \cite[Chapter 2, Theorem 5.1]{Kato1995} that the eigenvalues of $A_{\beta}(\la)$ are continuous with respect to $\beta.$ We only need to show that there exists $\la_2>0$, which depends on $\epsilon$, such that
\begin{equation*}
\sigma( A_\beta(\la))\subset \{x+{\rm i}y : x,y\in\mathbb{R},x < 0  \} \; \text{for}\;  \la\in(0,\la_2] \; \text{and}\;  \beta=\epsilon.
\end{equation*}
If it is not true, then there exists a sequence $\{\la_l\}_{l=1}^\infty$  such that $\lim_{l\to\infty}\la_l=0$, and
\begin{equation}\label{Aeps}
\left( A_{\epsilon}({\la_l})-  \mu I \right) (\bm {\varphi}, \bm{ \psi})^T=\bm 0
\end{equation}
is solvable for some value of $(\mu_{\la_l},\bm {\varphi}_{\la_l},\bm{ \psi}_{\la_l})$ with $\mathcal Re \mu_{\la_l }\geq 0$ and $ (\bm\varphi_{\la_l},\bm \psi_{\la_l})^T(\ne\bm 0)\in \mathbb C^{2n}$.
Here $A_{\epsilon}(\la_l)=\left.A_{\beta}(\la_l)\right|_{\beta=\epsilon}$.
Substituting $(\mu,\bm {\varphi},\bm{ \psi})=(\mu_{\la_l},\bm {\varphi}_{\la_l},\bm{ \psi}_{\la_l})$ into \eqref{Aeps}, we have
\begin{equation}\label{Aeps1}
\begin{cases}
\ds\mu_{\la_l} \varphi _{{\la_l} j}= \sum_{k=1}^np_{jk}\varphi_{{\la_l} k} +\la_l \left[ M_{1j}^{({\la_l},\epsilon)} \varphi _{{\la_l} j} + M_{2j}^{({\la_l},\epsilon)}\psi _{{\la_l} j} \right], \;\;j=1,\dots,n, \\
\ds\mu_{\la_l} \psi _{{\la_l} j}= \theta\sum_{k=1}^n q_{jk} \psi_{{\la_l} k}  + \la_l \left[ M_{3j}^{({\la_l},\epsilon)}\varphi_{{\la_l} j} -M_{2j}^{({\la_l},\epsilon)}\psi_{{\la_l} j} \right],\;\;j=1,\dots,n.\\
\end{cases}
\end{equation}
Note from Lemma \ref{l3} that $\left|\mu_{\la_ l}/{\la_l}\right| $ is bounded. Then, up to a subsequence, we assume that $\lim_{l\to\infty} \mu_{\la_l }/{\la_l }=\mu^*$ with $\mathcal Re \mu^*\geq 0$. As in the proof of Lemma \ref{ad}, we see that $(\bm \varphi_{\la l },\bm \psi_{\la l})^T(\neq\bm 0)\in \mathbb{C}^{2n}$ can be represented as
\begin{equation}\label{Aeps2}
\begin{cases}
\bm \varphi_{\la_l} =\delta_{\la_l} \bm \xi+\bm w_{\la_l}, \;\;\text{where}\;\;\delta_{\la_l}\ge0\;\;\text{and}\;\;\bm w_{\la_l}\in \mathcal M_{\mathbb C},\\
\bm \psi_{\la_l}  =({s_1}_{\la_l}+{\rm i}{s_2}_{\la_l})\bm \eta+\bm z_{\la_l},\;\;\text{where}\;\;{s_1}_{\la_l},{s_2}_{\la_l}\in\mathbb R\;\;\text{and}\;\;\bm z_{\la_l} \in \mathcal M_{\mathbb C},\\
\|\bm \varphi_{\la_l}\|_2^{2}+\|\bm \psi_{\la_l}\|_2^{2}=\|\bm \xi\|_2^2+\|\bm \eta\|_2^2,
\end{cases}
\end{equation}
and $\bm w_{\la_l} $ and $ \bm z_{\la_l}$ satisfy
$$\lim_{l\to\infty}\bm w_{\la_l} =\bm 0,\;\;\lim_{l\to\infty}\bm z_{\la_l} =\bm 0.$$
Up to a subsequence, we also assume that $\lim_{l\to\infty} \delta_{\la_l } =\delta^*$, $\lim_{l\to\infty} {s_1}_{\la_l } =s_1^*$ and $\lim_{l\to\infty}{s_2}_{\la_l }=s_2^*$. Dividing the first and second equation of \eqref{Aeps1} by $\la_l$, respectively, summing the results over all $j$, and taking the limits as $l\to\infty$,  we have
\begin{equation*}%\label{c1*}
\begin{cases}
\ds\mu^* \delta^*= \delta^* \sum_{j=1}^n M_{1j}^{(0,\epsilon)} \xi_j +  \left(s_1^*+{\rm i}s_2^*   \right)\sum_{j=1}^n M_{2j}^{(0,\epsilon)}\eta_j, \\
\ds\mu^* \left(s_1^*+{\rm i}s_2^*   \right)=   \delta^* \sum_{j=1}^n M_{3j}^{(0,\epsilon)}\xi_j  - \left(s_1^*+{\rm i}s_2^*   \right) \sum_{j=1}^n M_{2j}^{(0,\epsilon)}\eta_j .\\
\end{cases}
\end{equation*}
It follows from \eqref{Aeps2} that at least one of
$\delta^*$ and $s_1^*+{\rm i}s_2^* $ is not equal to zero. Consequently, $\mu^*$ is an eigenvalue of
matrix
$$\left( {\begin{array}{cc}
 \sum_{j=1}^n  M_{1j}^{(0,\epsilon)}\xi_j & \sum_{j=1}^n  M_{2j}^{(0,\epsilon)}  \eta_j \\
 \sum_{j=1}^n  M_{3j}^{(0,\epsilon)}\xi_j &-\sum_{j=1}^n  M_{2j}^{(0,\epsilon)}  \eta_j
\end{array}} \right).$$
It follows from \eqref{sum-m123} and \eqref{summs} that, for sufficiently small $\epsilon$, $$\sum_{j=1}^n M_{1j}^{(0,\epsilon)}\xi_j-\sum_{j=1}^n  M_{2j}^{(0,\epsilon)}  \eta_j<0,$$ which contradicts $\mathcal Re \mu^*\geq 0$. This completes the proof.
\end{proof}
Noticing that model \eqref{m2} is equivalent to the original model \eqref{m1}, we have the following result.
\begin{theorem}% \label{ed}
Assume that $0<\epsilon\ll1$, and $(\bf A_1)-(\bf A_2)$ hold. Then there exists $d_*>0$ (depending on $\epsilon$) and a continuously differentiable mapping
$$d_1\mapsto\beta^{d_1}:
[d_*,\infty)\to\mathcal B=[\epsilon,1/\epsilon]$$ such that, for each $d_1\in[d_*,\infty)$, the unique positive equilibrium
of model \eqref{m1} is locally asymptotically stable when $\beta \in [\epsilon,\beta^{d_1})$ and unstable when $\beta \in (\beta^{d_1}, \frac{1}{\epsilon}]$. Moreover, system \eqref{m1} undergoes a Hopf bifurcation when $\beta=\beta^{d_1}$.
\end{theorem}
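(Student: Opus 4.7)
The plan is to simply transport the result of Theorem \ref{ed} from the $\lambda$-parametrization of \eqref{m2} back to the $d_1$-parametrization of \eqref{m1} via the change of variables $\tilde t = d_1 t$, $\lambda = 1/d_1$ introduced at the beginning. Since this is a pure time rescaling by a positive factor $d_1 > 0$, it preserves equilibria, preserves simplicity and eigenspaces of the Jacobian, multiplies every eigenvalue by $d_1$, and hence preserves the sign of real parts and sends purely imaginary eigenvalues to purely imaginary eigenvalues. The entire Hopf bifurcation structure of Theorem \ref{ed} therefore transfers mechanically.

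Concretely, I would first set $d_* := 1/\la_2$ with $\la_2$ from Theorem \ref{ed}, so that $\la := 1/d_1 \in (0,\la_2]$ for every $d_1 \in [d_*,\infty)$, and define $\beta^{d_1} := \beta_{1/d_1}$. This is continuously differentiable on $[d_*,\infty)$ by the chain rule since $\la\mapsto\beta_\la$ is continuously differentiable on $[0,\la_2]$. By Theorem \ref{thglobal} the unique positive equilibrium of \eqref{m1} coincides with $\left(\bm x^{(1/d_1,\beta)},\bm y^{(1/d_1,\beta)}\right)$, and linearizing \eqref{m1} at this equilibrium yields precisely the Jacobian $d_1 A_\beta(1/d_1)$, where $A_\beta(\la)$ is defined in \eqref{ph1}. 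The claimed local asymptotic stability for $\beta\in[\epsilon,\beta^{d_1})$ and instability for $\beta\in(\beta^{d_1},1/\epsilon]$ then follow immediately from Theorem \ref{ed} applied at $\la=1/d_1$, since multiplying by $d_1>0$ preserves the sign of $\mathcal Re\,\mu$.

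For the Hopf bifurcation at $\beta = \beta^{d_1}$, let $\mu(\beta)$ denote the smooth branch of eigenvalues of $A_\beta(1/d_1)$ produced in the discussion preceding Theorem \ref{the2}; then $d_1\mu(\beta)$ is the corresponding smooth branch of eigenvalues of the Jacobian of \eqref{m1}. At $\beta = \beta^{d_1}$ we have $d_1\mu(\beta^{d_1}) = d_1\cdot {\rm i}(1/d_1)\nu_{1/d_1} = {\rm i}\nu_{1/d_1}$, which is purely imaginary and simple by Theorems \ref{the1*} and \ref{sim}, and the transversality condition
\[
\left.\frac{d\,\mathcal Re[d_1\mu(\beta)]}{d\beta}\right|_{\beta=\beta^{d_1}} = d_1\left.\frac{d\,\mathcal Re[\mu(\beta)]}{d\beta}\right|_{\beta=\beta_{1/d_1}} > 0
\]
follows at once from Theorem \ref{the2}. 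These are exactly the hypotheses of the classical finite-dimensional Hopf bifurcation theorem, so \eqref{m1} undergoes a Hopf bifurcation at $\beta = \beta^{d_1}$. There is essentially no obstacle here: the entire argument is bookkeeping through the bijection $\la = 1/d_1$, and the only point requiring any care is checking that simplicity, purely-imaginary-ness, and transversality are each preserved under the positive scalar multiplication by $d_1$, which is immediate.
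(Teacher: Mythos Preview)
Your proposal is correct and is essentially the paper's own approach: the paper gives no separate proof for this theorem, merely remarking that it follows because model \eqref{m1} is equivalent to model \eqref{m2} via the time rescaling $\tilde t = d_1 t$, $\lambda = 1/d_1$. You have simply spelled out the bookkeeping (the Jacobian scales by $d_1$, so stability, simplicity, pure-imaginariness, and transversality all transfer) that the paper leaves implicit.
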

\section{The effect of the coupling matrices}
In this section, we show the effect of the coupling matrices on the Hopf bifurcation value. Moreover, some numerical simulations are given to illustrate the theoretical results.
For simplicity, we consider a special case, where $P=Q$ and the boxes are all identical (that is,
$a_i=a$ and $b_i=1$ for $i=1,\dots,n$). Then, model \eqref{m2} is reduced to the following form:
\begin{equation}\label{e1}
\begin{cases}
\ds\frac{d x_j}{d t}= \sum_{k=1}^np_{jk}x_k+\la \left[ a - (\beta + 1)x_j + x_j^2y_j\right], &j=1,\dots,n,\;\;t>0,\\
\ds\frac{d y_j}{d t} = \theta\sum_{k=1}^n p_{jk}y_k  + \la \left(\beta x_j - x_j^2y_j\right),&j=1,\dots,n,\;\;t>0, \\
\bm x(0)=\bm x_0\ge(\not\equiv)\bm0,\;\bm y(0)=\bm y_0\ge(\not\equiv)\bm0.
\end{cases}
\end{equation}

From Theorem \ref{ed}, we have the following result on the dynamics of \eqref{e1}.

\begin{proposition} \label{pro2}
Assume that $0<\epsilon\ll1$. Then there exists $\la_2>0$ (depending on $\epsilon$) and a  Hopf bifurcation curve:
$$\la\mapsto\beta_\la:
[0,\la_2]\to\mathcal B=[\epsilon,1/\epsilon]$$ such that, for each $\la\in(0,\la_2]$, the positive equilibrium
$(\bm x^{(\la,\beta)},\bm y^{(\la,\beta)})$ of system \eqref{e1} is locally asymptotically stable when $\beta \in [\epsilon,\beta_\la)$ and unstable when $\beta \in (\beta_\la, \frac{1}{\epsilon}]$. Moreover, system \eqref{e1} undergoes a Hopf bifurcation at $(\bm x^{(\la,\beta)},\bm y^{(\la,\beta)})$ when $\beta=\beta_\la$, and $\beta_\la$ satisfies
\begin{equation}\label{examHopf}
\lim_{\la\to0} \beta_\la=\beta_0=1+ (n a)^2\left(\sum_{j=1}^n  \xi_j^3\right),
\end{equation}
where $(\xi_1,\dots,\xi_n)$ satisfying \eqref{xi} is the
corresponding eigenfunction of $P$ with respect to eigenvalue $0$.
\end{proposition}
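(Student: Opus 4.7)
The plan is to deduce this proposition almost entirely by specializing the general results of Section 3 to the case $P=Q$, $a_j=a$, $b_j=1$. First I would observe that system \eqref{e1} is a particular instance of model \eqref{m2} and that assumption $(\bf A_1)$ holds for this special choice of coupling matrix. Thus Theorem \ref{ed} applies verbatim and directly yields the existence of $\la_2>0$, the continuously differentiable Hopf bifurcation curve $\la\mapsto \beta_\la$, the local asymptotic stability on $[\epsilon,\beta_\la)$, the instability on $(\beta_\la,1/\epsilon]$, and the occurrence of a Hopf bifurcation at $\beta=\beta_\la$. So only the limit identity \eqref{examHopf} requires genuine computation.

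Next I would compute $\lim_{\la\to0}\beta_\la$ using Theorem \ref{the1}, which tells us that $\lim_{\la\to 0}\beta_\la=\beta_0$, with $\beta_0$ given by Lemma \ref{l5}:
\begin{equation*}
\beta_0 = \frac{\nu_0^2+1}{\sum_{j=1}^n b_j\xi_j},\qquad \nu_0=\left(\sum_{j=1}^n a_j\right)\sqrt{\sum_{j=1}^n \xi_j^2\eta_j}.
\end{equation*}
The key observation is that when $P=Q$, the normalized strongly positive eigenvectors of $P$ and $Q$ corresponding to the simple eigenvalue $s(P)=s(Q)=0$ coincide, i.e. $\bm\eta=\bm\xi$ in view of the normalization \eqref{xi}.

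Substituting $b_j=1$, $a_j=a$, and $\eta_j=\xi_j$ into the formulas above gives $\sum_{j=1}^n b_j\xi_j=\sum_{j=1}^n\xi_j=1$, $\sum_{j=1}^n a_j=na$, and $\sum_{j=1}^n\xi_j^2\eta_j=\sum_{j=1}^n\xi_j^3$. Hence
\begin{equation*}
\nu_0^2=(na)^2\sum_{j=1}^n\xi_j^3,\qquad \beta_0=1+(na)^2\sum_{j=1}^n\xi_j^3,
\end{equation*}
which establishes \eqref{examHopf}.

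There is no real obstacle here: the whole argument is a specialization of Theorem \ref{ed} and Lemma \ref{l5}, plus the elementary remark that $P=Q$ forces $\bm\eta=\bm\xi$ under the chosen normalization. The only thing to double check is that the hypotheses $(\bf A_1)$ and $(\bf A_2)$ remain satisfied so that Theorem \ref{ed} can be invoked, which is immediate in this special case.
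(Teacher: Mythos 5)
Your proposal is correct and follows exactly the route the paper intends: the paper states Proposition \ref{pro2} as an immediate consequence of Theorem \ref{ed}, and the limit \eqref{examHopf} is just the specialization of $\beta_0$ from Lemma \ref{l5} (via Theorem \ref{the1}) using $a_j=a$, $b_j=1$, and the fact that $P=Q$ together with the normalization \eqref{xi} forces $\bm\eta=\bm\xi$. Your computation $\sum_j b_j\xi_j=1$, $\nu_0^2=(na)^2\sum_j\xi_j^3$, hence $\beta_0=1+(na)^2\sum_j\xi_j^3$, is exactly right.
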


 Now, we show the effect of the coupling matrix on the Hopf bifurcation value.
 We recall the $P=(p_{jk})$ is line-sum symmetric matrix if $\sum_{k\ne j} p_{jk}=\sum_{k\ne j}p_{kj}$ for all $j=1,\dots,n$.
\begin{proposition} \label{pro3}
Denote $\beta_0$ by $\beta_0^{L}$ (respectively, $\beta_0^{NL}$) when $P=(p_{jk})$ is line-sum symmetric (respectively, not line-sum symmetric).
Then $\beta_0^L<\beta_0^{NL}$.
\end{proposition}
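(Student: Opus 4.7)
The plan is to exploit the explicit formula \eqref{examHopf} for the limiting Hopf value,
\[
\beta_0 = 1 + (na)^2 \sum_{j=1}^n \xi_j^3,
\]
so that the comparison $\beta_0^L$ versus $\beta_0^{NL}$ reduces entirely to comparing the quantity $\sum_{j=1}^n \xi_j^3$ subject to $\sum_j \xi_j = 1$ and $\xi_j > 0$. My strategy has two ingredients: first, I will characterize when the Perron eigenvector $\bm\xi$ of $P$ equals the uniform vector $\tfrac{1}{n}\bm 1$; second, I will apply a convexity inequality to $t\mapsto t^3$.

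First I would observe that by the definition $p_{jj} = -\sum_{k\neq j} p_{kj}$, the column sums of $P$ always vanish, so $\bm 1^T P = \bm 0^T$. Now if $P$ is additionally line-sum symmetric, meaning $\sum_{k\neq j} p_{jk} = \sum_{k\neq j} p_{kj}$ for each $j$, then the row sums also vanish, hence $P\bm 1 = \bm 0$. Since $0$ is a simple eigenvalue of $P$ with a unique (up to scaling) strongly positive eigenvector by $(\bf A_1)$ and Perron--Frobenius, the normalization in \eqref{xi} forces $\bm\xi = \tfrac{1}{n}\bm 1$. Conversely, if $\bm\xi = \tfrac{1}{n}\bm 1$, then $P\bm 1 = \bm 0$, i.e., the row sums vanish, and combining with the vanishing column sums yields line-sum symmetry. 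Thus $\bm\xi = \tfrac{1}{n}\bm 1$ \emph{if and only if} $P$ is line-sum symmetric.

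In the line-sum symmetric case, substituting $\xi_j = 1/n$ into \eqref{examHopf} gives
\[
\beta_0^L = 1 + (na)^2 \cdot n\cdot \frac{1}{n^3} = 1 + a^2.
\]
For the non-line-sum-symmetric case I would invoke Jensen's inequality: since $t\mapsto t^3$ is strictly convex on $(0,\infty)$ and $\sum_j \xi_j = 1$ with $\xi_j > 0$,
\[
\sum_{j=1}^n \xi_j^3 = n \cdot \frac{1}{n}\sum_{j=1}^n \xi_j^3 \geq n \left(\frac{1}{n}\sum_{j=1}^n \xi_j\right)^3 = \frac{1}{n^2},
\]
with equality if and only if all $\xi_j$ coincide, i.e., $\bm\xi = \tfrac{1}{n}\bm 1$. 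By the characterization above, if $P$ is \emph{not} line-sum symmetric then $\bm\xi \neq \tfrac{1}{n}\bm 1$, so the inequality is strict, giving
\[
\beta_0^{NL} = 1 + (na)^2 \sum_{j=1}^n \xi_j^3 > 1 + a^2 = \beta_0^L.
\]

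Honestly, there is no serious obstacle here: the argument is essentially a one-line application of Jensen's inequality, and the only point that needs care is the equivalence between $\bm\xi$ being uniform and $P$ being line-sum symmetric, which follows cleanly from the already-noted fact that column sums of $P$ automatically vanish. Therefore the proof should be quite short.
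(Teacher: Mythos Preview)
Your proof is correct and follows essentially the same approach as the paper: both reduce the comparison to the inequality $\sum_j \xi_j^3 \ge 1/n^2$ with equality iff $\bm\xi$ is uniform, and both identify uniformity of $\bm\xi$ with line-sum symmetry of $P$. The only cosmetic difference is that the paper invokes H\"older's inequality where you use Jensen's inequality for the strictly convex map $t\mapsto t^3$; your version also spells out the ``$\bm\xi$ uniform $\Leftrightarrow$ $P$ line-sum symmetric'' equivalence more carefully than the paper, which simply asserts it.
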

\begin{proof}
It follows from \eqref{examHopf} that $\beta_0=1+ (n a)^2\left(\sum_{j=1}^n  \xi_j^3\right)$.
A direct computation implies that $$\bm \xi=(\xi_1,\dots,\xi_n)^T=\left(\frac{1}{n},\dots, \frac{1}{n}\right)^T,$$ if and only if
$P$ is line-sum symmetric. Then we see that $\beta_0^L=1+a^2$.
It follows from \eqref{xi} and H\"{o}lder inequality that
\begin{equation}\label{Holder}
1=\left(\sum_{j=1}^n  \xi_j\right)^3\leq\left(\sum_{j=1}^n 1^\frac{3}{2}\right)^2\left(\sum_{j=1}^n \xi_j^3\right)= n^2\left(\sum_{j=1}^n \xi_j^3\right),
\end{equation}
and the equality holds if and only if  $\xi_j=1/n$ for $j=1\cdots n$.
Then, we see from \eqref{examHopf} and \eqref{Holder} that
$\beta_0^L<\beta_0^{NL}$.
\end{proof}
From Propositions \ref{pro2} and \ref{pro3}, we see that the Hopf bifurcation value with line-sum symmetric coupling matrix is smaller than that for the non-line-sum symmetric case, see Fig. \ref{fig3}.
\begin{figure}[htbp]
\centering
\setlength{\abovecaptionskip}{-0.5cm}
\includegraphics[width=0.5\textwidth]{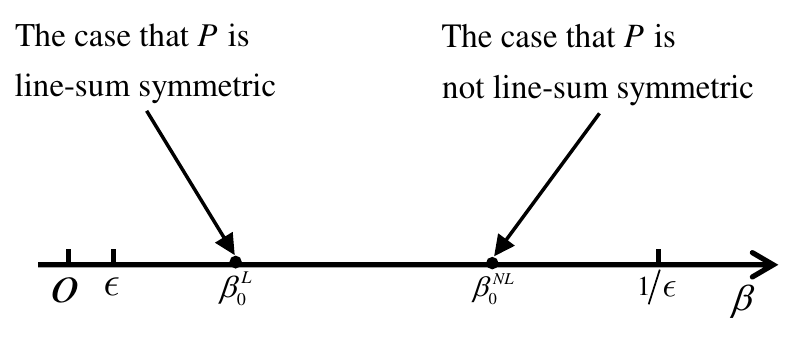}
\caption{The effect of coupling matrices on Hopf bifurcation value.\label{fig3}}
\end{figure}

This phenomenon can also be illustrated numerically. We consider model \eqref{e1}, and choose the parameters and initial values as follows:
\begin{equation*}
n=3,\;\;\beta=2.05,\;\; \la=0.1,\;\;\theta=1,\;\;a=1,\;\;x_j(0)=y_j(0)=1,(j=1,2,3).
\end{equation*}
Here we choose two different coupling matrices:
$$P^L:=\left( {\begin{array}{ccc}
   -2& 1& 1\\
    1&-3& 2\\
    1& 2&-3
\end{array}} \right),\;\;P^{NL}:=\left( {\begin{array}{ccc}
   -3& 2& 3\\
    2&-3& 2\\
    1& 1&-5
\end{array}} \right),$$
where $P^L$ is line-sum symmetric, and $P^{NL}$ is not
line-sum symmetric.
Then when $\beta(=2.05)$ is fixed, model \eqref{e1} admits a positive periodic solution for $P=P^L$, while the positive equilibrium is stable for $P=P^{NL}$, see Fig. \ref{fig6}. This shows that the Hopf bifurcation value of model \eqref{e1} for $P=P^L$ is smaller than that for $P=P^{NL}$.
\begin{figure}[htbp]
\centering
\includegraphics[width=0.4\textwidth]{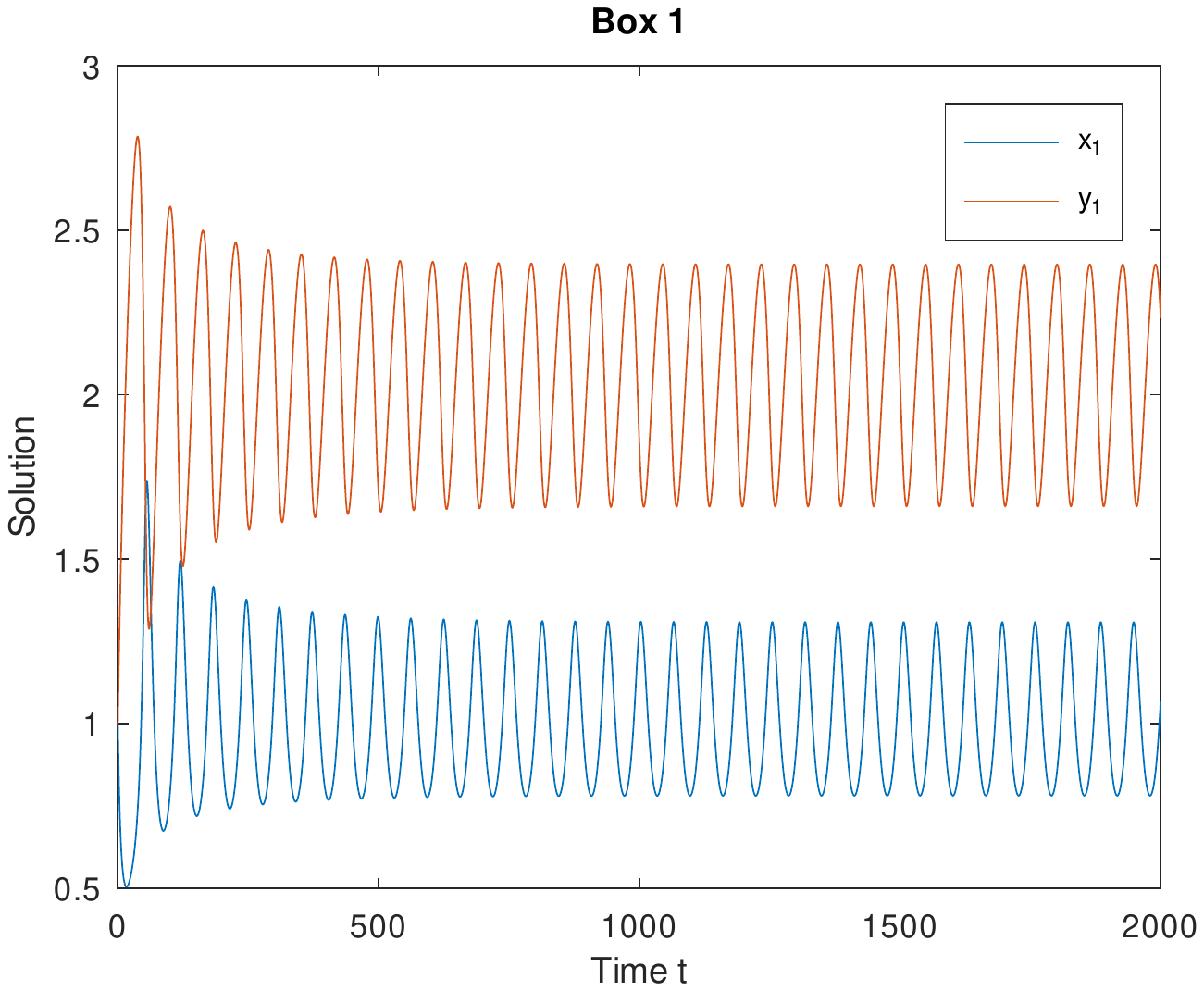}\includegraphics[width=0.4\textwidth]{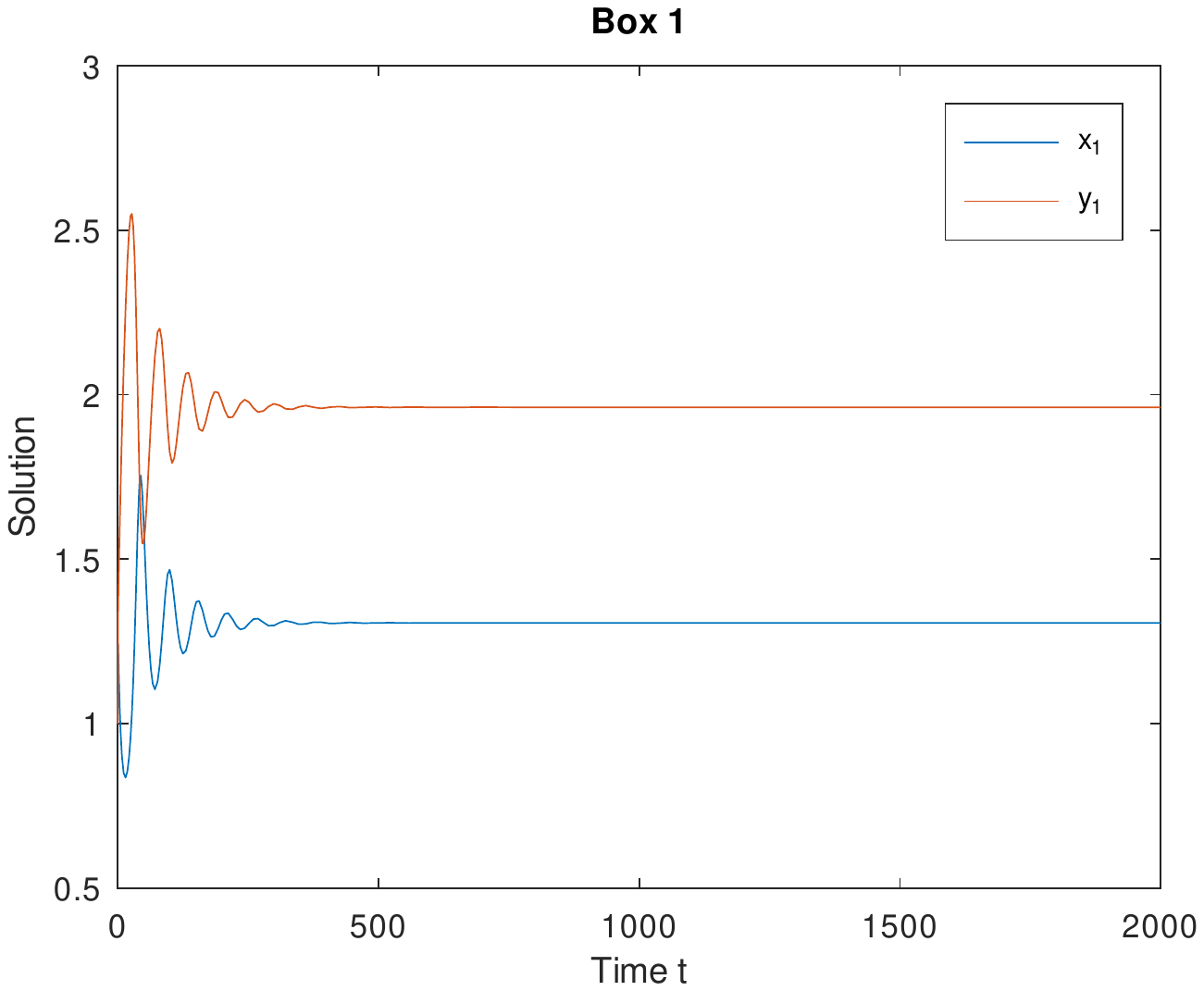}
\caption{The effect of coupling matrices. Here $\beta=2.05$, and we only plot one box for simplicity. (Left): $P=P^L$; (Right): $P=P^{NL}$. \label{fig6}}
\end{figure}

Finally, we provide some numerical simulations to illustrate the theoretical results obtained in Section 3. Here we consider model \eqref{m2}, and choose $n=5$, $\la=0.1$, $\theta=1$ and coupling matrices:
$$P:=\left( {\begin{array}{ccccc}
   -4&1&1&1&1\\
    1&-5&2&1&1\\
    1&1&-5&1&1\\
    1&2&1&-4&2\\
    1&1&1&1&-5
\end{array}} \right),\;\;
Q:=\left( {\begin{array}{ccccc}
   -5&1&1&1&1\\
    1&-6&2&1&1\\
    1&1&-5&1&1\\
    2&1&1&-6&3\\
    1&3&1&3&-6
\end{array}} \right).$$
Moreover, let $\beta$ be the variable parameter and choose the other parameters as Table \ref{table1}.
\begin{table}[htbp]
\setlength{\abovecaptionskip}{0cm}
\setlength{\belowcaptionskip}{0.3cm}
  \centering
  \caption{Initial concentrations of reactants in five boxes.}\label{table1}
    \begin{tabular}{|l|c|c|c|c|c|}
    \hline
  \diagbox{ }{$j$} & 1 & 2 & 3 & 4 & 5 \\
   \hline
    $a_j$ &1&2&1&0.5&1 \\
    $b_j$ &0.1&0.2&0.4&0.1&0.2 \\
    $x_j(0)$ &0.5&1&1&0.5&1 \\
    $y_j(0)$ &2&1&2&1&1 \\
    \hline
    \end{tabular}
\end{table}
We numerically show that model \eqref{m2} can undergoes a Hopf bifurcation, and consequently periodic solutions can arise, see Figs. \ref{fig4} and \ref{fig5}.

\begin{figure}[htbp]
\centering
\includegraphics[width=0.4\textwidth]{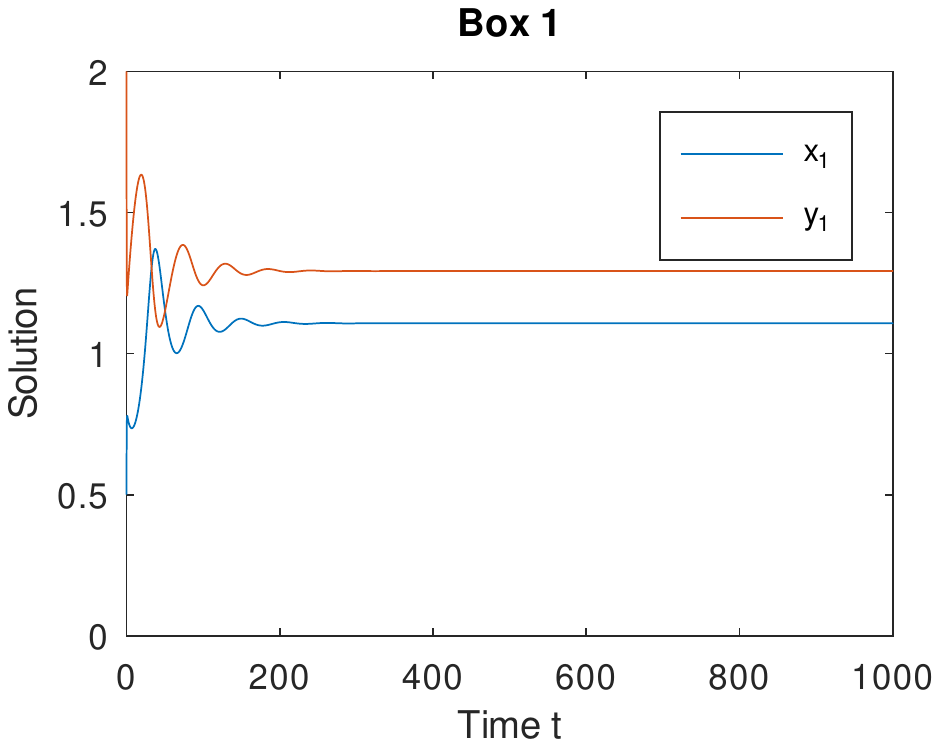}
\includegraphics[width=0.4\textwidth]{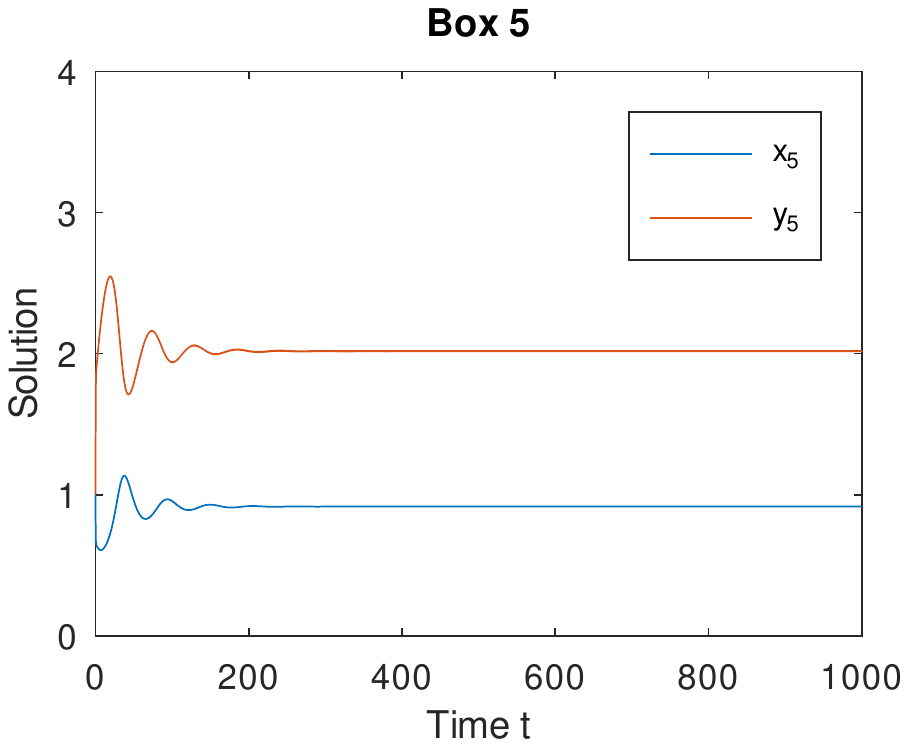}
\caption{The corresponding solution converges to the unique positive equilibrium for model \eqref{m2}. Here $\beta=10$, and we only plot two boxes for simplicity.  \label{fig4}}
\end{figure}

\begin{figure}[htbp]
\centering
\includegraphics[width=0.4\textwidth]{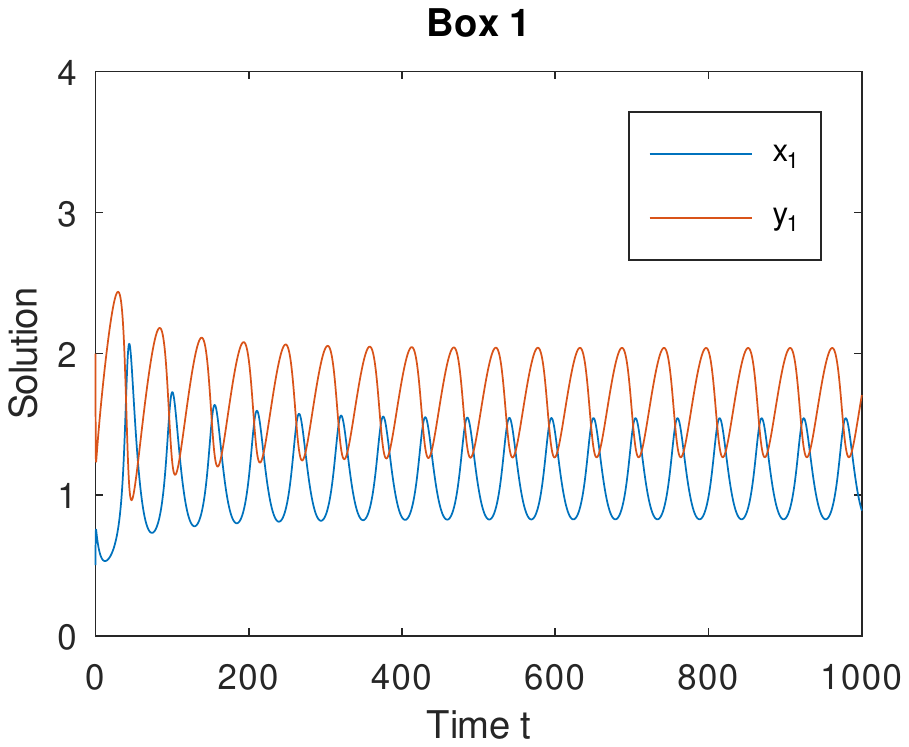}
\includegraphics[width=0.4\textwidth]{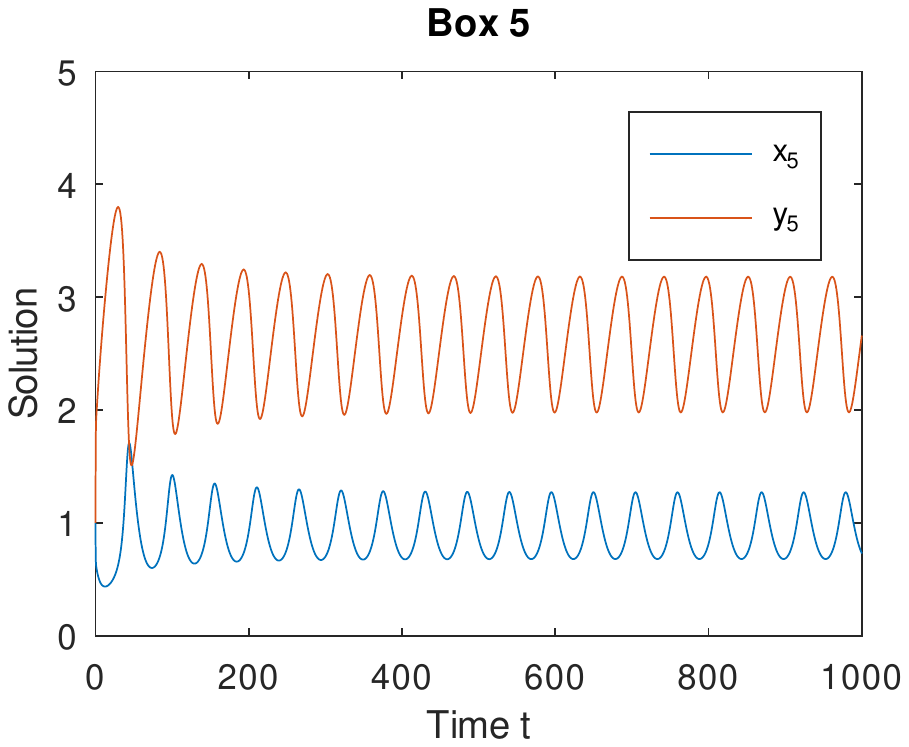}
\caption{The corresponding solution converges to a positive periodic solution for model \eqref{m2}. Here $\beta=13$, and we only plot two boxes for simplicity.\label{fig5}}
\end{figure}

\newpage
\section*{Statements and Declarations}
{\bf {Funding:}} This study was funded by National Natural Science Foundation of China (grant number 12171117) and Shandong Provincial Natural Science Foundation of China (grant number ZR2020YQ01).\\
{\bf{Conflict of Interest:}} The authors declare that they have no conflict of interest.
{\bf{Availability of date and materials:}} All data generated or analysed during this study are included in this published article.
%\end{thebibliography}

%\begin{thebibliography}{10}

%\bibitem{Smith1995Monotone}
%H.~Smith.
%\newblock Monotone Dynamical Systems: An Introduction to the Theory of Competitive and Cooperative Systems.
%\newblock {\em Mathematical Surveys and Monographs}, 1995. Volume 41.

%\bibitem{Hale1997Theory}
%J.~Hale.
%\newblock Theory of functional differential equations.
%\newblock {\em Springer-Verlag, New York-Heidelberg}, second edition, 1977. {\em Applied Mathematical Sciences}, Vol. 3.

%\bibitem{Kato1995}
%Kato~T.
%\newblock Perturbation theory in a finite-dimensional space.
%\newblock {\em Springer, Berlin, Heidelberg},1995. vol 132. {\em Classics in Mathematics}.

%\bibitem{1}
%L.~J.~S. Allen, B.~M. Bolker, Y.~Lou, and A.~L. Nevai.
%\newblock Asymptotic profiles of the steady states for an {SIS} epidemic patch
%  model.
%\newblock {\em SIAM J. Appl. Math.}, 67(5):1283--1309, 2007.

%\end{thebibliography}

\end{document}